\pgfplotsset{compat=1.17}
\definecolor{light}{gray}{.75}
\definecolor{med}{gray}{.5}
\definecolor{dark}{gray}{.25}
\newtheorem{theorem}{Theorem}
\numberwithin{theorem}{section}
\newtheorem{proposition}[theorem]{Proposition}
\newtheorem{corollary}[theorem]{Corollary}
\newtheorem{lemma}[theorem]{Lemma}
\newtheorem{question}[theorem]{Question}
\theoremstyle{definition}
\newtheorem{definition}[theorem]{Definition}
\newtheorem{remark}[theorem]{Remark}
\newtheorem{example}[theorem]{Example}
\newcommand{\Q}{{\mathbb Q}}
\newcommand{\RR}{\mathcal{R}}
\newcommand{\Z}{{\mathbb Z}}
\renewcommand{\P}{{\mathbb P}}
\newcommand{\A}{{\mathbb A}}
\renewcommand{\AA}{\mathcal{A}}
\newcommand{\BB}{\tilde{\mathfrak{B}}}
\newcommand{\K}{{\mathbb K}}
\newcommand{\B}{\mathbb{B}}
\newcommand{\E}{\mathcal{E}}
\newcommand{\F}{\mathcal{F}}
\newcommand{\FL}{\mathcal{FL}}
\newcommand{\PP}{\mathcal{P}}
\renewcommand{\O}{\mathcal{O}}
\newcommand{\V}{\mathcal{V}}
\newcommand{\G}{\mathbb{G}}
\newcommand{\J}{\mathcal{J}}
\renewcommand{\L}{\mathcal{L}}
\renewcommand{\S}{\textup{S}}
\newcommand{\D}{\mathcal{D}}
\newcommand{\Trop}{\textup{Trop}}
\newcommand{\T}{\mathcal{T}}
\newcommand{\I}{\mathcal{I}}
\newcommand{\row}{\textup{row}}
\newcommand{\bx}{\mathbf{x}}
\newcommand{\be}{\mathbf{e}}
\newcommand{\bn}{\mathbf{n}}
\newcommand{\br}{\mathbf{r}}
\newcommand{\bs}{\mathbf{s}}
\newcommand{\In}{\textup{in}}
\renewcommand{\v}{\mathfrak{v}}
\renewcommand{\In}{\textup{in}}
\newcommand{\MAX}{\textup{MAX}}
\newcommand{\MIN}{\textup{MIN}}
\newcommand{\Spec}{\textup{Spec}}
\newcommand{\Pic}{\textup{Pic}}
\newcommand{\GL}{\textup{GL}}
\newcommand{\Sym}{\textup{Sym}}
\newcommand{\Gr}{\textup{Gr}}
\newcommand{\Vect}{\textup{Vect}}
\newcommand{\Alg}{\textup{Alg}}
\newcommand{\End}{\textup{End}}
\newcommand{\Hom}{\textup{Hom}}
\title[Cox rings of flag bundles]{Cox rings of projectivized toric vector bundles and toric flag Bundles}
\author{Courtney George, Christopher Manon}
\begin{document}

\thanks{The authors are supported by National Science Foundation Grant DMS-2101911 and a Simons Collaboration Grant (award number 587209).}

\maketitle

\begin{abstract}
Work of Gonz\'alez, Hering, Payne, and S\"u\ss \ shows that it is possible to find both examples and non-examples of Mori dream spaces among projectivized toric vector bundles.  This result, and the combinatorial nature of the data of projectivized toric vector bundles make them an ideal test class for the question: what makes a variety a Mori dream space?  In the present paper we consider this question with respect to natural algebraic operations on toric vector bundles. 

Suppose $\E$ is a toric vector bundle such that the projectivization $\P\E$ is a Mori dream space, then when are the direct sum toric vector bundles $\P(\E \oplus \E)$, $\P(\E \oplus \E \oplus \E)\ldots$ also Mori dream spaces?  We give an answer to this question utilizing a relationship with the associated full flag bundle $\FL(\E)$.  We describe several classes of examples, and we compute a presentation for the Cox ring of the full flag bundle for the tangent bundle of projective space.
\end{abstract}

\section{Introduction}

Let $\K$ be an algebraically closed field. We fix dual lattices $N$, $M = \Hom(N, \Z)$, and a complete, polyhedral fan $\Sigma \subseteq N_\Q$. Let $T_N$ and $X(\Sigma)$ denote the corresponding algebraic torus and complete toric variety over $\K$.  We assume throughout that $X(\Sigma)$ is both smooth and projective. A \emph{toric vector bundle} over $X(\Sigma)$ is a vector bundle $\pi: \E \to X(\Sigma)$ equipped with a $T_N$-action so that $\pi$ is a map of $T_N$-spaces.  The associated projectivized vector bundle $\P\E$ is known as a \emph{projectivized} toric vector bundle.  

Recall that a smooth, projective variety $X$ is said to be a \emph{Mori dream space} when the \emph{Cox ring} $\RR(X) = \bigoplus_{\L \in \Pic(X)} H^0(X, \L)$ is finitely generated.  Mori dream spaces were introduced by Hu and Keel \cite{Hu-Keel} as those spaces whose rational contractions can be understood entirely using Variation of Geometric Invariant Theory (VGIT).  Many examples of Mori dream spaces are known, along with notable non-examples \cite{Castravet-Tevelev-M0n}, \cite{Castravet-Tevelev}, \cite{Gonzalez-Karu}. See the book \cite{ADUH-book} and the article \cite{Castravet-Survey} for an excellent survey of recent work on this topic.  In \cite{HMP}, Hering, Payne, and Musta\c{t}\u{a} asked when a projectivized toric vector bundle $\P\E$ is a Mori dream space.   
 
The class of projectivized toric vector bundles is a natural place to search for Mori dream spaces. They are more general than toric varieties, but still carry an essentially combinatorial description \cite{Klyachko}, \cite{Kaveh-Manon-tvb}.  Gonz\'alez, and Hausen and S\"u\ss \ obtained the first results along these lines, showing that the projectivizations of rank $2$ toric vector bundles \cite{Gonzalez-rank2}, and tangent bundles of toric varieties \cite{Hausen-Suss}, are Mori dream spaces.  Subsequently, Gonz\'alez, Hering, Payne, and S\"u\ss \ \cite{GHPS} (see also \cite{Nodland}), described larger classes of toric vector bundles with this property, and found multiple non-examples.  More recently, the second author and Kaveh \cite{Kaveh-Manon-tvb} have explored the Mori dream space question with methods from tropical geometry and computational commutative algebra.  

Given toric vector bundles $\E$ and $\F$ such that $\P\E$ and $\P\F$ are Mori dream spaces, it's natural to ask if the direct sum $\E \oplus \F$ projectivizes to a Mori dream space. We focus on the special case $\E = \F$. 

\begin{question}\label{quest-tensor}
Let $V$ be a finite-dimensional vector space. Given a toric vector bundle $\E$ such that $\P(\E)$ is a Mori dream space, when is $\P(\E \oplus \E)$, and more generally $\P(\E \otimes V)$, a Mori dream space? 
\end{question}

We find that this question is deeply related to the Mori dream space property for \emph{flag bundles}. Let $E$ be a vector space of dimension $r$, and let $I = \{i_1, \ldots, i_d\}$ be set of positive integers less than $r$. Recall that the flag variety $\FL_I(E)$ is the space of flags $0 \subset V_1 \subset \ldots \subset V_d \subset E$, where $V_j$ is a subspace of dimension $i_j$.  Two distinguished cases are given by the Grassmannian variety $\Gr_\ell(E)$ of $\ell$-spaces, where $I = \{\ell\}$, and the full flag variety $\FL(E)$, where $I = \{1, \ldots, r-1\}$.  We set $\max(I) = \MAX\{i \mid i \in I\}$.  Bundles of flag varieties are natural generalizations of projectivized toric vector bundles and have appeared recently in Brill-Noether theory \cite{Chan-Pflueger}. The latter work suggests that the geometry of the relative versions of the various important subvarieties of flag varieties, in particular the relative Richardson varieties, could be of independent interest. In this paper we focus on the case of toric flag bundles over toric varieties. A toric flag bundle over $X(\Sigma)$ is a bundle with fibers isomorphic to $\FL_I(E)$, equipped with a compatible $T_N$ action, see Section \ref{sec-main-vector}. If $\E$ is a toric vector bundle then its flag bundle $\FL_I(\E)$ is an example of a toric flag bundle. 

\begin{question}\label{quest-flag}
When is $\FL_I(\E)$ a Mori dream space?
\end{question}

The following, proved in Section \ref{sec-main-vector}, links the answers to Questions \ref{quest-tensor} and \ref{quest-flag}. 

\begin{theorem}\label{thm-main}
Let $\E$ be a toric vector bundle, then the projectivized toric vector bundle $\P(\E\otimes V)$ is a Mori dream space for all $\dim(V) \leq \ell$ if and only if the toric flag bundle $\FL_I(\E)$ is a Mori dream space for all $I$ with $\max(I) \leq \ell$. Moreover, the full flag bundle $\FL(\E)$ is a Mori dream space if and only if $\P(\E \otimes V)$ is a Mori dream space for all finite dimensional vector spaces $V$. In particular, the projectivization $\P(\E \oplus \cdots \oplus \E)$ of the sum $\E \oplus \cdots \oplus \E$ with $\ell$ summands is a Mori dream space if and only if $\FL_I(\E)$ is a Mori dream space for all $I$ with $\max(I) \leq \ell$. 
\end{theorem}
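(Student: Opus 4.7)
The plan is to exploit the Schur-Weyl decomposition (Cauchy identity) to compare the Cox ring of $\P(\E \otimes V)$ with those of the flag bundles $\FL_I(\E)$. For $V$ of dimension $\ell$, the sections of the tautological line bundle $\O(d)$ on $\P(\E \otimes V)$ decompose as $H^0(\P(\E \otimes V), \O(d)) = H^0(X, \mathrm{Sym}^d(\E \otimes V)) = \bigoplus_\lambda H^0(X, S^\lambda \E) \otimes S^\lambda V$, where $\lambda$ runs over partitions of $d$ with at most $\ell$ parts. Relative Borel-Weil-Bott identifies each $H^0(X, S^\lambda \E)$ with global sections of explicit line bundles $\mathcal{L}_\lambda$ on partial flag bundles $\FL_I(\E)$ with $\max(I) \leq \ell$. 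Thus $\RR(\P(\E \otimes V))$ is built from the same graded pieces $H^0(X, S^\lambda \E)$ (with $\ell(\lambda) \leq \ell$) as the Cox rings $\RR(\FL_I(\E))$ for $\max(I) \leq \ell$, differing only in auxiliary data and the multiplication rules.

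First I would pair this Schur-Weyl decomposition with a GIT description: each flag bundle $\FL_I(\E)$ with $\max(I) \leq \ell$ arises as a GIT quotient of $\P(\E \otimes V)$ by the $PGL(V)$-action induced on $\E \otimes V \cong \E^{\oplus \ell}$, across varying linearizations corresponding to parabolic subgroups of $GL(V)$. The forward implication, $\P(\E \otimes V)$ MDS implies $\FL_I(\E)$ MDS for $\max(I) \leq \ell$, then follows from Hu and Keel's propagation of MDS under reductive GIT quotients. For the converse, I would use the Schur-Weyl decomposition in the other direction: assuming every $\FL_I(\E)$ with $\max(I) \leq \ell$ is MDS, the finitely generated Cox rings $\RR(\FL_I(\E))$ supply a finite set of generators among the pieces $H^0(X, S^\lambda \E)$, which should then assemble into a finite generating set for $\RR(\P(\E \otimes V))$. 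The remaining assertions follow as specializations: the direct sum statement is the case $V = \K^\ell$, and the full flag bundle equivalence follows by letting $\ell = r-1$, so that all partitions of length at most $r$ become accessible.

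The hard part will be the converse implication. While the Borel-Weil/Schur-Weyl correspondence shows that the relevant graded pieces on the two sides match, lifting finite generation from the individual $\RR(\FL_I(\E))$ to $\RR(\P(\E \otimes V))$ requires controlling the multiplication structure --- how products of sections on different flag bundles combine, via Pieri-type rules, into sections of $\mathrm{Sym}^d(\E \otimes V)$ --- and verifying that a finite set of such generators suffices. This is likely where the tropical and computational framework of Kaveh and Manon \cite{Kaveh-Manon-tvb} enters, together with a careful analysis of the toric equivariant structure on each Schur functor $S^\lambda \E$.
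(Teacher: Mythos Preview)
Your setup via the Cauchy identity is exactly right, and you correctly identify that the graded pieces $H^0(X(\Sigma), \S_\lambda(\E))$ with $\row(\lambda) \leq \ell$ are common to both $\RR(\P(\E\otimes V))$ and the Cox rings of the flag bundles. But you have missed the key structural fact that turns this bookkeeping into a proof, and as a result both of your directions are harder than they need to be.

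The precise relationship is not merely that the two rings are ``built from the same pieces with different multiplication rules''; rather, for $\dim(V)=\ell<r$ one has an equality of algebras
\[
\RR(\FL_{[\ell]}(\E)) \;=\; \RR(\P(\E\otimes V))^{U_V},
\]
where $U_V$ is a maximal unipotent subgroup of $\GL(V)$ (and for $\ell\geq r$ the right side is $\RR(\FL(\E))[t]$). This is Lemma \ref{lem-Coxflag} and Theorem \ref{thm-coxflagvector}. Once you have this, both implications follow simultaneously from a single classical result of Grosshans: if $G$ is reductive with maximal unipotent $U$ and $R$ is a $G$-algebra, then $R$ is finitely generated if and only if $R^{U}$ is finitely generated. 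Applied with $G=\GL(V)$ and $R=\RR(\P(\E\otimes V))$, this gives $\RR(\P(\E\otimes V))$ finitely generated $\Leftrightarrow$ $\RR(\FL_{[\ell]}(\E))$ finitely generated. The passage from $\FL_{[\ell]}(\E)$ to general $\FL_I(\E)$ with $\max(I)\leq\ell$ is then just torus invariants (a graded subring), and conversely $\FL_{[\ell]}(\E)$ is itself one of the $\FL_I(\E)$.

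Your forward direction via Hu--Keel is problematic as stated: $\FL_I(\E)$ is not a GIT quotient of $\P(\E\otimes V)$ by the \emph{reductive} group $PGL(V)$ --- the relevant quotient is by a unipotent (or parabolic) subgroup, where Hu--Keel does not directly apply. One can repair this via the transfer principle $R^{U}=(R\otimes\K[G/U])^{G}$, but at that point you are essentially redoing Grosshans. Your converse direction --- assembling generators from the various $\RR(\FL_I(\E))$ and controlling products via Pieri rules --- is exactly the kind of delicate combinatorics that Grosshans's theorem lets you avoid; no tropical or computational input from \cite{Kaveh-Manon-tvb} is needed here.
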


\noindent
Using Theorem \ref{thm-main} and techniques introduced in \cite{Kaveh-Manon-tvb} we find explicit examples of Mori dream space toric flag bundles in Section \ref{sec-examples}.  

It is not always the case that $\P(\E \otimes V)$ is Mori dream space for all $V$ when $\P(\E)$ is a Mori dream space. The Grassmannian bundle $\Gr_{r-1}(\E)$ is the projectivization of the exterior power bundle $\bigwedge^{r-1}\E$, and the latter is isomorphic to the projectivization $\P\E^\vee$ of the dual bundle.  If $\P(\E\otimes V)$ is always a Mori dream space, then $\FL(\E)$ is a Mori dream space, and $\P\E^\vee$ must also be a Mori dream space.  In \cite{GHPS} an example is given of a toric three-fold with non-Mori dream space cotangent bundle.  Combined with Theorem \ref{thm-main} we get non-examples for Questions \ref{quest-tensor} and \ref{quest-flag}, namely a Mori dream space toric vector bundle whose full flag bundle and sum with itself are not Mori dream spaces. Theorem \ref{thm-main} also shows that full flag bundles have the sort of stability property we'd wish Mori dream space toric vector bundles to have. 

\begin{corollary}\label{cor-flagstability}
For a toric vector bundle $\E$, $\FL(\E)$ is a Mori dream space if and only if $\FL(\E \otimes W)$ is a Mori dream space for all finite dimensional vector spaces $W$.
\end{corollary}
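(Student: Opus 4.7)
The plan is to deduce this directly from Theorem \ref{thm-main} using associativity of the tensor product. The forward implication is essentially free: taking $W = \K$ gives $\E \otimes W \cong \E$, so if $\FL(\E \otimes W)$ is a Mori dream space for all $W$, then in particular $\FL(\E)$ is.

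For the reverse implication, assume $\FL(\E)$ is a Mori dream space, fix a finite dimensional vector space $W$, and consider the toric vector bundle $\E \otimes W$. Applying Theorem \ref{thm-main} to $\E \otimes W$, we see that $\FL(\E \otimes W)$ is a Mori dream space if and only if $\P((\E \otimes W) \otimes V)$ is a Mori dream space for every finite dimensional vector space $V$. Since tensor product of a toric vector bundle with a finite dimensional vector space is associative, there is a canonical isomorphism of toric vector bundles $(\E \otimes W) \otimes V \cong \E \otimes (W \otimes V)$, and $U := W \otimes V$ is again a finite dimensional vector space.

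By the hypothesis that $\FL(\E)$ is a Mori dream space, a second application of Theorem \ref{thm-main} tells us that $\P(\E \otimes U)$ is a Mori dream space for every finite dimensional $U$. Specializing to $U = W \otimes V$ and running $V$ over all finite dimensional vector spaces, we conclude that $\P((\E \otimes W) \otimes V)$ is a Mori dream space for all $V$, hence $\FL(\E \otimes W)$ is a Mori dream space, as required.

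Since the argument is a purely formal consequence of Theorem \ref{thm-main} and the associativity isomorphism, there is no real obstacle; the main point to verify is that the natural isomorphism $(\E \otimes W) \otimes V \cong \E \otimes (W \otimes V)$ is compatible with the $T_N$-actions, which is immediate because $W$ and $V$ are given trivial torus actions in the construction of the tensor product toric vector bundles.
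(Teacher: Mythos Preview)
Your proof is correct and follows essentially the same route as the paper: both arguments deduce the corollary from two applications of Theorem~\ref{thm-main} together with the associativity isomorphism $(\E \otimes W)\otimes V \cong \E \otimes (W\otimes V)$. The paper's proof is simply a terser version of what you wrote, and only records the nontrivial implication; your added remarks on the trivial direction and on $T_N$-equivariance are fine but not needed.
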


\begin{proof}
If $\FL(\E)$ is a Mori dream space, then $\P(\E \otimes W \otimes V)$ is a Mori dream space for all finite dimensional vector spaces $V$. By Theorem \ref{thm-main}, $\FL(\E \otimes W)$ is a Mori dream space. 
\end{proof}

\begin{remark}
Theorem \ref{thm-main} can also be used to give a sufficient condition for the pullback of a toric flag bundle $\FL(\E)$ under a toric blow-up to be a Mori dream space.  By Theorem \ref{thm-main}, the pullback of $\FL(\E)$ is a Mori dream space if and only if the pullback of $\P(\E \otimes V)$ is a Mori dream space, where $\dim(V) = r-1$.  In principle, the latter can be checked with \cite[Corollary 6.13]{Kaveh-Manon-tvb}. 
\end{remark}

\subsection{Toric vector bundles and representation stability}

If $\E$ is a toric vector bundle satisfying Theorem \ref{thm-main} then we have a family of finitely generated commutative algebras $\RR(\P(\E \otimes V))$ as $V$ varies. It's reasonable to ask which algebraic properties of $\RR(\P(\E \otimes V))$ hold independent of $V$.

\begin{question}\label{quest-stability}
Suppose $\P(\E \otimes V)$ is a Mori dream space for all finite dimensional vector spaces $V$.  Is there a degree $d$, independent of $dim(V)$, such that the graded components of $\RR(\P(\E \otimes V))$ of degree $\leq d$ always form a generating set?
\end{question}

Here ``degree" is given by the grading of $\RR(\P\E)$ by the symmetric powers of $\E$, see Section \ref{sec-rees}. This question belongs to the general theory of \emph{representation stability} \cite{Sam-Snowden}, \cite{Sam-Snowden-Grob}, \cite{Sam-syzygy}, \cite{Erman-Sam-Snowden}.  The following classical theorem of Weyl (\cite{Weyl}) is a result of this flavor. 

\begin{theorem}\label{thm-weyl}[Weyl]
Let $E$ be finite dimensional representation of a group $\Gamma$. For a vector space $V$, let $\Gamma$ act on $E \otimes V$ through $E$. If the invariant ring $\K[E \otimes E]^\Gamma$ is generated in degree $d$, then the invariant ring $\K[E \otimes V]^\Gamma$ is generated in degree $d$ for all finite dimensional vector spaces $V$. 
\end{theorem}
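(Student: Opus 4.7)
The plan is to use Weyl's polarization-restitution method, reformulated via Schur-Weyl duality. The key observation is that $\GL(V)$ acts on $\K[E \otimes V]$ through the second factor, preserving the polynomial grading and commuting with the $\Gamma$-action on $E$; this makes $\K[E \otimes V]^\Gamma$ into a graded $\GL(V)$-algebra whose generators I need to control.

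First I would handle the case $\dim V \leq r := \dim E$ by a retraction argument. Any linear injection $V \hookrightarrow E$ with a splitting $E \twoheadrightarrow V$ tensors with $E$ to give $\Gamma$-equivariant maps $E \otimes V \hookrightarrow E \otimes E \twoheadrightarrow E \otimes V$ composing to the identity. Passing to coordinate rings and taking $\Gamma$-invariants yields a degree-preserving algebra retraction
\[
\K[E \otimes V]^\Gamma \ \hookrightarrow \ \K[E \otimes E]^\Gamma \ \twoheadrightarrow \ \K[E \otimes V]^\Gamma.
\]
For any $f \in \K[E \otimes V]^\Gamma$, lift via the inclusion, express the lift in the given degree $\leq d$ generators of $\K[E \otimes E]^\Gamma$, and push the expression through the retraction; this writes $f$ itself as a polynomial in degree $\leq d$ invariants of $\K[E \otimes V]^\Gamma$.

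For $\dim V > r$, I would fix a subspace $V_0 \subseteq V$ of dimension $r$ and invoke the Schur-Weyl decomposition
\[
\K[E \otimes V]^\Gamma \ \cong \ \bigoplus_{\lambda} (S^\lambda E^*)^\Gamma \otimes S^\lambda V^*
\]
as a $\Gamma \times \GL(V)$-module. Only partitions with $\ell(\lambda) \leq r$ contribute (since $S^\lambda E^* = 0$ otherwise), and for such $\lambda$ the natural inclusion $S^\lambda V_0^* \hookrightarrow S^\lambda V^*$ generates its target as a $\GL(V)$-module (each irreducible summand is cyclic). Consequently any homogeneous $f \in \K[E \otimes V]^\Gamma$ is a $\K$-linear combination of $\GL(V)$-translates of elements of $\K[E \otimes V_0]^\Gamma$. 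By the previous case (noting $\dim V_0 = r = \dim E$), each such element is a polynomial in degree $\leq d$ invariants of $\K[E \otimes V_0]^\Gamma \subseteq \K[E \otimes V]^\Gamma$; since $\GL(V)$ acts by degree-preserving $\Gamma$-equivariant algebra automorphisms, any $\GL(V)$-translate of such a polynomial expression remains a polynomial in degree $\leq d$ invariants of $\K[E \otimes V]^\Gamma$, and $f$ is generated in degree $\leq d$.

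The main obstacle is the Schur-Weyl step, which requires the standard plethystic decomposition of $\Sym(E^* \otimes V^*)$ under $\GL(E) \times \GL(V)$ together with the fact that $S^\lambda V^*$ is generated as a $\GL(V)$-module by $S^\lambda V_0^*$ whenever $\ell(\lambda) \leq \dim V_0$. Granting these ingredients, the conceptual crux -- passing from $\GL(V)$-module generation back to algebra generation in degree $\leq d$ -- is automatic precisely because $\GL(V)$ acts by graded algebra automorphisms commuting with the $\Gamma$-action.
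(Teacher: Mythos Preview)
The paper does not give its own proof of this statement—it is cited as a classical theorem of Weyl, with a pointer to the ``modern treatment'' in Kraft--Procesi. Your argument is correct and is exactly that modern treatment; it is also the template the paper adapts (via Proposition \ref{prop-rep} and Lemma \ref{lem-coxinclude}) to prove the analogous Cox-ring statement, Theorem \ref{thm-main-stability}.
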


\noindent
The representation $E\otimes V$ is analogous to a direct sum of toric vector bundles, with corresponding symmetric powers $\K[E \otimes V] = \bigoplus_{n \geq 0} \Sym^n(E \otimes V)$, and ``ring of global sections"  $\K[E \otimes V]^\Gamma$.  The following, proved in Section \ref{sec-main-vector}, is the expected translation into our setting. 

\begin{theorem}\label{thm-main-stability}
Let $\E$ be a toric vector bundle over $X(\Sigma)$ with general fiber $E$. If the Cox ring of $\P(\E \otimes E)$ is generated in degree $d$, then the Cox ring of $\P(\E \otimes V)$ is generated in degree $d$ for all finite dimensional vector spaces $V$. 
\end{theorem}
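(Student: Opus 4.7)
The plan is to exhibit $\RR(\P(\E \otimes V))$ as a polynomial functor of $V$ whose multiplicative structure is controlled by $V$-independent data, and then to reduce generation at each $V$ to surjectivity of fixed maps on multiplicity spaces. Set $r = \mathrm{rank}(\E)$. For each $\L \in \Pic(X)$, the Cauchy decomposition of symmetric powers of a tensor product gives
\[
H^0(X, \L \otimes \Sym^n(\E \otimes V)) \;=\; \bigoplus_{|\lambda|=n,\ \ell(\lambda) \leq \min(r, \dim V)} M_{\L,\lambda} \otimes S_\lambda V,
\]
where $S_\lambda$ denotes the $\lambda$-th Schur functor and $M_{\L,\lambda} := H^0(X, \L \otimes S_\lambda \E)$ is independent of $V$ and vanishes whenever $\ell(\lambda) > r$. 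Writing $R_n(V)$ for the degree-$n$ Sym-graded piece of $\RR(\P(\E\otimes V))$, summed over all line bundle classes, this presents $R_n(V) = \bigoplus_\lambda M_\lambda \otimes S_\lambda V$ with $V$-independent multiplicity spaces $M_\lambda := \bigoplus_\L M_{\L,\lambda}$, supported on partitions with $\ell(\lambda) \leq r$.

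The next step is to observe that multiplication in the Cox ring is functorial in $V$: every linear map $V_1 \to V_2$ induces a bundle map $\E \otimes V_1 \to \E \otimes V_2$ and hence a graded algebra homomorphism $\RR(\P(\E \otimes V_1)) \to \RR(\P(\E \otimes V_2))$. For any tuple $n_1, \ldots, n_k$ with $\sum n_i = n$, the iterated product
\[
\mu_V : R_{n_1}(V) \otimes \cdots \otimes R_{n_k}(V) \longrightarrow R_n(V)
\]
is therefore a natural transformation of polynomial functors of $V$. By $\GL(V)$-equivariance and Schur--Weyl duality, $\mu_V$ decomposes into isotypic components of the form $\widetilde{\mu}_\lambda \otimes \mathrm{id}_{S_\lambda V}$, where $\widetilde{\mu}_\lambda$ is a fixed linear map on multiplicity spaces (independent of $V$) and the $S_\lambda V$-factor carries the Littlewood--Richardson projections.

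The conclusion is then formal. Generation of $\RR(\P(\E \otimes V))$ in degree $\leq d$ amounts to the surjectivity of $\bigoplus \mu_V$ onto $R_n(V)$ for every $n > d$, summed over all tuples $n_i \leq d$ with $\sum n_i = n$; by the isotypic splitting this is equivalent to the surjectivity of each $\widetilde{\mu}_\lambda$ on $M_\lambda$ for every $\lambda$ with $\ell(\lambda) \leq \min(\dim V, r)$. Since $M_\lambda$ vanishes for $\ell(\lambda) > r$, the hypothesis at $V = E$ (where $\dim V = r$) supplies surjectivity of every $\widetilde{\mu}_\lambda$ with non-trivial target, and the same surjectivities automatically hold for every finite-dimensional $V$. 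The main obstacle is the naturality/decomposition step---verifying that the multiplication really splits as $\widetilde{\mu}_\lambda \otimes \mathrm{id}_{S_\lambda V}$ with $V$-independent multiplicity data. This is a formal consequence of $\GL(V)$-equivariance and Schur--Weyl duality, and gives the argument its representation-stability flavor, closely paralleling Weyl's classical proof of Theorem~\ref{thm-weyl}.
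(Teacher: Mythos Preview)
Your argument is correct and is essentially an explicit unpacking of the paper's first proof, which simply observes that $V \mapsto \RR(\P(\E \otimes V))$ is a \emph{bounded} twisted commutative algebra (since only $\lambda$ with $\ell(\lambda)\leq r$ occur) and then cites Sam--Snowden for the conclusion; your decomposition $\mu_V = \widetilde{\mu}_\lambda \otimes \mathrm{id}_{S_\lambda V}$ with $V$-independent $\widetilde{\mu}_\lambda$ is exactly the content of that cited result, so you have supplied a self-contained version. The paper also records a second proof with a slightly different flavor: it embeds $\RR(\P(\E\otimes E)) \hookrightarrow \RR(\P(\E\otimes V))$ so that every $\GL(V)$-highest weight vector of the target already lies in the source, and then uses a general lemma on $G$-spans (their Proposition~\ref{prop-rep}) to conclude that the $\GL(V)$-span of any generating set of the small ring generates the large one. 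The two arguments are dual expressions of the same phenomenon---yours tracks multiplicity spaces, theirs tracks highest weight vectors---and both ultimately rest on the boundedness $\ell(\lambda)\leq r$.
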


 As a consequence, we immediately deduce that the pseudo-effective cones of the spaces $\P(\E\otimes V)$ stabilize at $V = E$ when $\FL(\E)$ is a Mori dream space. 

\begin{corollary}
Let $\E$ be a rank $r$ toric vector bundle, and suppose that $\FL(\E)$ is a Mori dream space, and let $C_\ell \subset \textup{CL}(X(\Sigma))_\Q\times\Q$ be the pseudo-effective cone of $\P(\E \otimes V)$, where $\dim(V) = \ell$. Then $C_i \subseteq C_j$ when $i < j$, and $C_\ell= C_r$ for all $\ell \geq r$. 
\end{corollary}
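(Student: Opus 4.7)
The plan is to reduce both claims to a representation-theoretic computation via the Cauchy decomposition of symmetric powers. First I would fix the canonical identification $\Pic(\P(\E \otimes V)) \cong \textup{CL}(X(\Sigma)) \oplus \Z$, with the $\Z$ factor generated by the tautological class $\O(1)$ relative to the projection $\pi:\P(\E \otimes V) \to X(\Sigma)$; this identification is independent of $V$ in the sense relevant here. Under it, the effective cone of $\P(\E \otimes V)$ lies in the half-space $\{n \geq 0\}$, and is spanned by pairs $(\L, n)$ such that
\[ H^0\bigl(\P(\E \otimes V),\, \pi^*\L \otimes \O(n)\bigr) \;=\; H^0\bigl(X(\Sigma),\, \L \otimes \Sym^n(\E \otimes V)\bigr) \]
is nonzero, and $C_\ell$ is the closure of this cone in $\textup{CL}(X(\Sigma))_\Q \times \Q$.

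The next step is to invoke the Cauchy decomposition
\[ \Sym^n(\E \otimes V) \;=\; \bigoplus_{\lambda \vdash n} S^\lambda \E \otimes S^\lambda V, \]
where $S^\lambda$ denotes the Schur functor, $S^\lambda V$ vanishes when $\ell(\lambda) > \dim V$, and $S^\lambda \E$ vanishes when $\ell(\lambda) > r$. Taking global sections and using that $S^\lambda V$ is a constant vector space, the class $(\L, n)$ is effective on $\P(\E \otimes V)$ precisely when some partition $\lambda \vdash n$ satisfies both $\ell(\lambda) \leq \min(r, \dim V)$ and $H^0(X(\Sigma), \L \otimes S^\lambda \E) \neq 0$. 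The entire $V$-dependence is now isolated in this single length bound.

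From this both assertions follow immediately. For $C_i \subseteq C_j$ with $i < j$, every partition with $\ell(\lambda) \leq i$ also satisfies $\ell(\lambda) \leq j$, so each effective class on $\P(\E \otimes V)$ with $\dim V = i$ remains effective when $\dim V = j$, and the inclusion passes to closures. For the stabilization $C_\ell = C_r$ when $\ell \geq r$, once $\dim V \geq r$ the bound $\ell(\lambda) \leq \min(r, \dim V)$ collapses to $\ell(\lambda) \leq r$ and no longer depends on $V$, so the set of effective classes, and hence its closure, is constant on the range $\ell \geq r$. The Mori dream space hypothesis on $\FL(\E)$ enters only via Theorem \ref{thm-main} to ensure that each $\P(\E \otimes V)$ is itself a Mori dream space so that the cones $C_\ell$ are rational polyhedral; the heart of the argument is the Schur-functor bookkeeping above, and I do not anticipate a substantive technical obstacle.
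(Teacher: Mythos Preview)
Your argument is correct and in fact more direct than the paper's. The paper deduces the corollary from Theorem~\ref{thm-main-stability}: since $\FL(\E)$ is a Mori dream space, each $\RR(\P(\E\otimes V))$ is finitely generated, the pseudo-effective cone is the convex hull of the multi-degrees of a generating set, and Theorem~\ref{thm-main-stability} forces these multi-degrees to coincide for all $\ell\geq r$; the inclusion $C_i\subseteq C_j$ comes from Lemma~\ref{lem-coxinclude}. You instead work directly with the \emph{effective} classes rather than with Cox-ring generators: the Cauchy identity shows that the set of effective $(\L,n)$ depends on $V$ only through the bound $\ell(\lambda)\leq\min(r,\dim V)$, so the effective cones (hence their closures) are nested and stabilize at $\dim V=r$. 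This bypasses the representation-stability machinery entirely; indeed your argument establishes the nesting and stabilization of the $C_\ell$ without using the Mori dream space hypothesis at all, so your remark that the hypothesis only enters to make the cones polyhedral is apt but even understates the point. The paper's route has the virtue of exhibiting the corollary as a consequence of the stronger generator-stability statement, while yours isolates exactly what is needed and shows the conclusion holds unconditionally.
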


\begin{proof}
The class group of $\P(\E \otimes V)$ is always $\textup{CL}(X(\Sigma))\times \Z$, and the cone $C_\ell \subset \textup{CL}(\P(\E\otimes V))\otimes \Q$ is the convex span of the multi-degrees from a generating set of $\RR(\P(\E\otimes V))$.  By Lemma \ref{lem-coxinclude} we must have $C_i \subseteq C_j$, and by Theorem \ref{thm-sec-main-stability} the multi-degrees of the a generating set of $\RR(\P(\E \otimes E))$ coincide with those of $\RR(\P(\E \otimes V))$ for any $V$ with $\dim(V) =\ell \geq r$. 
\end{proof}

We prove Theorem \ref{thm-main-stability} by showing that the functor $V \to \RR(\P(\E\otimes V))$ is a \emph{bounded twisted commutative algebra}, in the sense defined by Sam and Snowden in \cite{Sam-Snowden}. More generally, a twisted algebra is a functor $A: \Vect_\K \to \Alg_\K$.  It is possible to show that properties in the family of algebras $A(V), V \in \Vect_\K$ hold independent of $V$ by proving corresponding properties hold for the functor $A$.  Accordingly, we refer to $V \to \RR(\P(\E \otimes V))$ as the \emph{twisted Cox ring} of $\P\E$.  

It would be interesting to know when the tangent bundle $\mathcal{T}_X$ satisfies the equivalent conclusions of Theorem \ref{thm-main} for a Mori dream space $X$. The non-example \cite{GHPS} shows that this statement does not even hold for all toric varieties, however we answer this question in the positive for $X$ a product of projective spaces in Section \ref{sec-examples}. 

\subsection{Projective spaces and other examples}

In Section \ref{sec-examples} we explore the consequences of Theorems \ref{thm-main} and \ref{thm-main-stability} for several classes of examples.  The class of \emph{uniform sparse} toric vector bundles is defined in \cite{Kaveh-Manon-building} by placing restrictions on the algebraic and tropical data used to classify toric vector bundles in \emph{loc. cit.}.  This classification involves a linear ideal $L$ and an integer matrix $D$.  For a sparse uniform toric vector bundle, $L$ is a general linear ideal and $D$, up to certain equivalences, has at most one non-zero, positive entry in each row.  We give sufficient conditions in terms of this data for the associated flag bundles of these toric vector bundles to be Mori dream spaces (Theorem \ref{thm-uniformsparse}). The following is a corollary of our results and a theorem of Kaneyama \cite{Kaneyama88}.

\begin{corollary}[Corollary \ref{cor-kaneyama}]
Let $\F$ be an irreducible toric vector bundle of rank $n$ on $\P^n$, then $\P(\F)$ is a Mori dream space. 
\end{corollary}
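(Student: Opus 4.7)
The plan is to combine Kaneyama's classification of low-rank irreducible toric vector bundles on projective space with the uniform sparse criterion developed earlier in Section \ref{sec-examples}. In \cite{Kaneyama88} Kaneyama shows that every toric vector bundle of rank strictly less than $n$ on $\P^n$ splits as a direct sum of line bundles, and in rank exactly $n$ he produces an explicit finite list (up to tensor product with a line bundle and taking duals) of all irreducible examples, essentially built from twists of the tangent bundle. So the task reduces to verifying the Mori dream space property for each family appearing in that list.

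First I would translate Kaneyama's description of each bundle $\F$ into the combinatorial data $(L, D)$ used to classify toric vector bundles in the Kaveh-Manon framework of \cite{Kaveh-Manon-building}, where $L$ is a linear ideal describing the generic fiber filtration and $D$ is an integer matrix whose rows correspond to the $n+1$ rays of the fan of $\P^n$. Irreducibility of $\F$ should force $L$ to be a general linear ideal, while the rank $n$ constraint together with the small number of rays should force $D$, after the equivalences described in \emph{loc.\ cit.}, to have at most one positive entry per row — i.e.\ to take the uniform sparse shape in the sense of Section \ref{sec-examples}.

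Second, I would apply Theorem \ref{thm-uniformsparse} to this data. That theorem gives sufficient conditions in terms of $(L, D)$ for flag bundles of a uniform sparse toric vector bundle to be Mori dream spaces, and in particular covers the projectivization case $\P(\F) = \FL_{\{1\}}(\F)$, for which $\max(I) = 1$. Note that for this corollary one does not actually need the equivalence in Theorem \ref{thm-main}; only the Grassmannian specialization of the uniform sparse criterion is used, so the full flag bundle / tensor product machinery developed earlier plays no direct role.

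The main obstacle is the bookkeeping in the translation step — setting up an explicit dictionary between Kaneyama's transition-cocycle description on the standard affine cover of $\P^n$ and the Klyachko / Kaveh-Manon linear-algebraic data, and confirming that irreducibility really does pin down the sparse form of $D$. Once that dictionary is in place, the verification of the hypotheses of Theorem \ref{thm-uniformsparse} for each bundle on Kaneyama's finite list should be a direct check.
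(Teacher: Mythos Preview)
Your plan works cleanly for only half of Kaneyama's list. Kaneyama's classification says that an irreducible rank $n$ toric vector bundle on $\P^n$ is, up to twist by a line bundle, either one of the bundles $\E$ sitting in $0 \to \O \to \bigoplus_{i=0}^n \O(a_i) \to \E \to 0$ \emph{or its dual} $\E^\vee$. For $\E$ itself your translation step goes through exactly as you describe: the Klyachko data is the $(n+1)\times(n+1)$ diagonal matrix with entries $a_i$ and the all-ones linear form, so $\E$ is a uniform sparse (indeed sparse hypersurface) bundle and Theorem \ref{thm-uniformsparse} applies with $\ell = 1$.

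The gap is the dual family. If each Klyachko filtration of $\E$ has a single nontrivial step of dimension $1$, then each Klyachko filtration of $\E^\vee$ has a single nontrivial step of dimension $n-1$. In the $(L,D)$ picture this means the diagram of $\E^\vee$ does \emph{not} reduce to a matrix with one nonzero entry per row; $\E^\vee$ is not sparse, and there is no reason to expect it to satisfy the complete-intersection criterion of Proposition \ref{prop-formal}. So your claim that ``irreducibility forces the sparse form of $D$'' fails for exactly this half of the list, and Theorem \ref{thm-uniformsparse} gives you nothing for $\P(\E^\vee)$ directly.

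The paper closes this gap using precisely the flag bundle machinery you dismissed. Since $\E$ is sparse hypersurface, Corollary \ref{cor-hypersparse} gives that $\FL(\E)$ is a Mori dream space. But $\P(\E^\vee)$ is isomorphic to $\Gr_{n-1}(\E)$, a \emph{partial} flag bundle of $\E$, and its Cox ring is a torus-invariant subring of $\RR(\FL(\E))$; finite generation is inherited. So the full-strength $\ell = r-1$ case of the uniform sparse theorem, together with the identification of $\P(\E^\vee)$ as a flag bundle of $\E$, is what carries the dual case --- the flag bundle / tensor product machinery is essential, not incidental.
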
 

We apply results on the class of uniform sparse toric vector bundles to the tangent bundle of a product of projective spaces (Corollary \ref{cor-product2}).  Let $\T_n$ denote the tangent bundle of $\P^n$. In Section \ref{sec-projectivespace} we give a presentation of $\RR(\P(\T_n\otimes \K^m))$ for $m \leq n$.  For $m < n$, results on uniform sparse toric vector bundles imply that $\RR(\P(\T_n\otimes \K^m))$ is a complete intersection.  When $m = n$ a transition occurs (see \ref{cor-dominantgen}), and $\RR(\P(\T_n\otimes \K^m))$ can be shown to be a deformation of the coordinate ring of a Zelevinsky quiver variety (Proposition \ref{prop-zeldef}).  We also give a presentation of the Cox ring $\RR(\FL\T_n)$, and show that it comes equipped with a variant of the well-known Gel'fand-Zetlin degeneration (Theorem \ref{thm-FLT_n}). 

A twisted commutative algebra $A$ has a \emph{finite presentation} given by a pair of functors $F, G: \Vect_\K \to \Vect_\K$ and a natural transformation $r: G \to \Sym(F)$ if there is an exact sequence of natural transformations:\\

\[0 \to r(G)\Sym(F) \to \Sym(F) \to A \to 0.\]\\

\noindent
In particular, for any $V \in \Vect_\K$ we get a presentation $A(V) \cong \Sym(F(V))/\langle r(G)(V) \rangle$. It would be interesting to complete results in Section \ref{sec-projectivespace} to a finite presentation of the twisted Cox ring $V \to \RR(\P(\T_n\otimes V))$.  

\begin{question}
When can finite presentations be found for the twisted Cox ring $V \to \RR(\P(\E \otimes V))$ when $\E$ is the tangent bundle of a toric variety or a rank $2$ toric vector bundle?
\end{question}

\subsection{Flag Bundles For Other Groups} It is also possible to consider flag bundles for general semisimple algebraic groups $G$.  In Section \ref{sec-main-semisimple} we define the full flag bundle $\FL(\PP)$ of a \emph{ toric $G$-principal bundle} $\PP$, and prove the following analogue of Theorem \ref{thm-main}.  For a toric $G$-principal bundle $\PP$ we let $\RR(\PP)$ denote the \emph{total section algebra}, \cite[Section 5]{Kaveh-Manon-tvb}. 

\begin{proposition}\label{prop-main-semisimple}
Let $\PP$ be a toric $G$-principal bundle with total section algebra $\RR(\PP)$, then $\FL(\PP)$ is a Mori dream space if and only if $\RR(\PP)$ is finitely generated. 
\end{proposition}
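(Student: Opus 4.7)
The plan is to establish a graded algebra isomorphism $\RR(\FL(\PP)) \cong \RR(\PP)$; once in place, the proposition follows immediately since $\FL(\PP)$ is smooth and projective with finitely generated free Picard group, so being a Mori dream space is equivalent to finite generation of its Cox ring.

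First I would set up the fibration. Fix a Borel $B \subset G$ with maximal torus $T$ and character lattice $\Lambda$, so that $\FL(\PP) = \PP \times_G (G/B)$ and the projection $\pi : \FL(\PP) \to X(\Sigma)$ is a flat proper morphism with fibers $G/B$. Each $\lambda \in \Lambda$ gives a line bundle $\L_\lambda$ on $G/B$ and hence a relative line bundle on $\FL(\PP)$, also denoted $\L_\lambda$; together with pullbacks from $X(\Sigma)$ these yield the identification $\Pic(\FL(\PP)) \cong \Pic(X(\Sigma)) \oplus \Lambda$ via a standard Leray-type argument using $H^1(X(\Sigma), \O_{X(\Sigma)}) = 0$.

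The core computation invokes Borel--Weil and Kempf vanishing: $H^0(G/B, \L_\lambda) = V_\lambda^*$ for dominant $\lambda$ and zero otherwise, with $H^i(G/B, \L_\lambda) = 0$ for $i > 0$ and $\lambda$ dominant. Cohomology and base change applied to the flat proper morphism $\pi$ give $\pi_* \L_\lambda = \V_\lambda := \PP \times_G V_\lambda^*$ for dominant $\lambda$, $\pi_* \L_\lambda = 0$ otherwise, and $R^i \pi_* \L_\lambda = 0$ for $i > 0$. The projection formula then yields
\[H^0(\FL(\PP),\, \L_\lambda \otimes \pi^* \mathcal{M}) \;\cong\; H^0(X(\Sigma),\, \V_\lambda \otimes \mathcal{M})\]
for every $\mathcal{M} \in \Pic(X(\Sigma))$ and dominant $\lambda$, and zero for non-dominant $\lambda$. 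Summing over the Picard grading identifies the underlying vector spaces of $\RR(\FL(\PP))$ and $\RR(\PP)$.

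Finally, I would match the multiplicative structures. Cup product of sections on $\FL(\PP)$, restricted fiberwise to $G/B$, recovers the $G$-equivariant Cartan projections $V_\lambda^* \otimes V_\mu^* \to V_{\lambda+\mu}^*$ that define multiplication on the total section algebra $\RR(\PP)$, so the vector-space isomorphism is an isomorphism of graded algebras. The main obstacle I anticipate is making this final multiplicative comparison rigorous, namely verifying the naturality in $\lambda, \mu$ of the global cup product under the pushforward identifications; the other ingredients (Borel--Weil, Kempf vanishing, cohomology and base change, projection formula) are all standard.
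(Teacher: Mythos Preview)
Your computation of $\RR(\FL(\PP))$ via Borel--Weil, Kempf vanishing, and the projection formula is correct and matches the paper's Propositions~3.3--3.5: one obtains
\[
\RR(\FL(\PP)) \;\cong\; \bigoplus_{\lambda \in \Lambda_+,\ \L \in \Pic(X(\Sigma))} H^0\bigl(X(\Sigma),\, \V_\lambda \otimes \L\bigr).
\]
The gap is in the identification of this with $\RR(\PP)$. By definition $\RR(\PP)$ is the total section ring of the sheaf $\O(\PP)$, and the paper shows (Proposition~3.1) that
\[
\O(\PP) \;=\; \bigoplus_{\lambda \in \Lambda_+} \V_\lambda \otimes V_{\lambda^*},
\]
the relative Peter--Weyl decomposition. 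Hence
\[
\RR(\PP) \;\cong\; \bigoplus_{\lambda,\ \L} H^0\bigl(X(\Sigma),\, \V_\lambda \otimes \L\bigr)\otimes V_{\lambda^*},
\]
which carries an extra multiplicity factor $V_{\lambda^*}$ in each summand. What you have actually computed is $\RR(\PP)^{U_+}$, since taking $U_+$-invariants collapses each $V_{\lambda^*}$ to a line. The Cartan projection you invoke describes multiplication on $\O(G)^{U_+}$, not on $\O(G)$, so your multiplicative check confirms $\RR(\FL(\PP)) \cong \RR(\PP)^{U_+}$ rather than the claimed isomorphism.

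This leaves a genuinely nontrivial step: why is $\RR(\PP)$ finitely generated if and only if $\RR(\PP)^{U_+}$ is? The paper closes this with Grosshans' transfer theorem \cite[Theorem~16.2]{Grosshans}: for a rational $G$-algebra $R$, finite generation of $R$ is equivalent to finite generation of $R^{U_+}$. One direction uses that $R^{U_+}$ is the invariant ring of a reductive group acting on a finitely generated algebra; for the converse the paper passes through the horospherical contraction $[R^{U_+}\otimes \O(G)^{U_-}]^T$, an associated graded of $R$. Without this representation-theoretic input the equivalence does not follow, and your proposed direct isomorphism does not hold.
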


Various properties of toric $G$-principal bundles have been studied in \cite{BDP1}, \cite{BDP2}, \cite{BDP3} and classifications of these bundles are given by Biswas, Dey, and Poddar in \cite{BDP1}, and by the second author and Kaveh in \cite{Kaveh-Manon-building}.  In particular, in the latter work it is shown that a toric $G$-principal bundle $\PP$ over a toric variety $X(\Sigma)$ corresponds to a piecewise-linear map $\Phi: |\Sigma| \to \BB(G)$, where $\BB(G)$ is the cone over the spherical building of the group $G$.  Buildings are simplicial complexes together with certain distinguished subcomplexes called \emph{apartments}.  In particular, for a reductive group $G$, each apartment of $\BB(G)$ is isomorphic to the Coxeter complex of the Weyl group of $G$.  For the basics on the geometry of buildings, see \cite{Abramenko}.   We use the classification in \cite{Kaveh-Manon-building} to give a sufficient condition for $\FL(\PP)$ to be a Mori dream space. 

\begin{theorem}\label{thm-apartment}
Let $\PP$ be the toric $G$-principal bundle corresponding to $\Phi: |\Sigma| \to \BB(G)$. If $\Phi(|\Sigma|)$ lies in a single apartment of $\BB(G)$, then $\FL(\PP)$ is a Mori dream space. 
\end{theorem}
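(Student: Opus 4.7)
The plan is to use Proposition \ref{prop-main-semisimple} to reduce the problem to finite generation of the total section algebra $\RR(\PP)$, and to obtain this by exhibiting an explicit reduction of the structure group of $\PP$ to a maximal torus.

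First I would unpack the hypothesis through the classification of \cite{Kaveh-Manon-building}. Each apartment of $\BB(G)$ is (the cone over) the Coxeter complex of the Weyl group $W$ acting on the cocharacter lattice of a distinguished maximal torus $T \subset G$, and in the dictionary of \emph{loc. cit.} the locus of PL maps $\Phi: |\Sigma| \to \BB(G)$ with image contained in a single apartment $A$ corresponds precisely to toric $G$-principal bundles whose structure group reduces to the maximal torus $T$ associated to $A$. Thus $\PP \cong \PP_T \times_T G$ for a toric $T$-principal bundle $\PP_T$, and $\PP_T$ is in turn classified by an $r$-tuple of toric line bundles $L_1, \ldots, L_r$ on $X(\Sigma)$, one for each element of a basis of the character lattice of $T$.

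Given this reduction, I would compute $\RR(\PP)$ by exploiting the fact that each associated bundle $\V_\lambda(\PP) = \PP_T \times_T V_\lambda$ splits as a direct sum of toric line bundles $L_\mu := L_1^{\mu_1} \otimes \cdots \otimes L_r^{\mu_r}$, where $\mu$ ranges over the $T$-weights of $V_\lambda$ taken with multiplicity. Summing global sections over dominant weights $\lambda$ of $G$ and classes in $\Pic(X(\Sigma))$ presents $\RR(\PP)$ as a (twisted) subalgebra of a tensor product built from $\RR(X(\Sigma))$ and $\RR(G/B)$: the first factor is finitely generated because $X(\Sigma)$ is a toric variety, and the second factor is finitely generated because the flag variety $G/B$ is a classical Mori dream space with Cox ring generated by the sections of the line bundles attached to the fundamental weights (by Borel--Weil and standard monomial theory).

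Finally, the $T$-equivariance of the reduction $\PP = \PP_T \times_T G$ gives a natural surjection from a finitely generated algebra built from $\RR(X(\Sigma))$ and $\RR(G/B)$ onto $\RR(\PP)$, so $\RR(\PP)$ is finitely generated and $\FL(\PP)$ is a Mori dream space by Proposition \ref{prop-main-semisimple}. The main obstacle I anticipate is in the last step: the description of $\RR(\PP)$ is not literally a tensor product but a twisted one, and one must verify that the twisting by the line bundles $L_\mu$ (which may not be effective, and whose multiplicative structure is constrained by the root lattice of $G$) still yields a finitely generated algebra. I would handle this by decomposing $\RR(\PP)$ along the dominance order on weights and matching the resulting filtration against the standard monomial basis on $G/B$, so that the relations are controlled simultaneously by the toric grading coming from $\Pic(X(\Sigma))$ and by the Plücker-type relations on $G/B$.
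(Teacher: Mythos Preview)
Your outline is correct in its two structural moves: reducing to finite generation of $\RR(\PP)$ via Proposition~\ref{prop-main-semisimple}, and reading the apartment hypothesis as a reduction of structure group to the maximal torus $T$ associated to that apartment. The paper makes exactly these moves. Where you diverge is in the endgame, and the paper's route is both more direct and avoids the obstacle you flag.

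Rather than build a surjection from an algebra assembled out of $\RR(X(\Sigma))$ and something like $\RR(G/B)$, the paper stays in the Rees-algebra picture of Section~\ref{sec-rees}. The apartment condition says that the $n$ filtrations $F^i$ on $\K[G]$ defining $\RR(\PP)$ are all weight filtrations for the \emph{same} torus $T$; hence they are simultaneously diagonalized by the $T$-weight decomposition of $\K[G]$. Choosing $T$-homogeneous algebra generators $b_1,\ldots,b_m$ of $\K[G]$ with weights $\eta_j$, one lifts $b_j$ to the Rees algebra in multidegree $\bs_j=(\langle\Phi(p_i),\eta_j\rangle)_i$, adjoins the shift elements $x_i=1\in F_{-\be_i}$, and checks by a short monomial argument that $\{\tilde b_j,x_i\}$ already generates $\RR(\PP)$. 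This is precisely the ``surjection from a finitely generated algebra'' you were hoping to construct, but written down explicitly and without any appeal to standard monomial theory or to Pl\"ucker-type relations on $G/B$.

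Two remarks on your write-up. First, a small slip: the algebra you want on the fiber side for $\RR(\PP)$ is $\K[G]$ itself, not $\RR(G/B)=\K[G]^{U_+}$; the latter would be relevant if you were computing $\RR(\FL(\PP))$ directly. Second, the ``twisting'' you worry about is exactly what the Rees-algebra viewpoint makes tractable: once all filtrations come from a common grading, the intersection $F^1_{r_1}\cap\cdots\cap F^n_{r_n}$ is just a sum of graded pieces, and finite generation is immediate. Your proposed fix via the dominance order and a standard monomial basis would also work, but it is heavier machinery than the problem requires.
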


A map $\Phi: |\Sigma| \to \BB(G)$ as above is determined by its values on the ray generators of $\Sigma$, hence a toric $G$-principal bundle is determined by a configuration of points on $\BB{G}$. It's natural therefore to ask which arrangements of points give toric vector bundles with favorable geometric properties. 

\begin{question}
When does $\Phi: |\Sigma| \to \BB(G)$ define a Mori dream space full flag bundle $\FL(\PP)$? 
\end{question}

\noindent
{\bf Acknowledgements:} We thank an anonymous reviewer whose comments improved the exposition. 

\section{The Section Ring of a toric flat family}\label{sec-rees}

In this section we review background from \cite[Section 5]{Kaveh-Manon-tvb} on toric flat families and the total section ring. We let $\AA$ be a flat $T_N$-sheaf of algebras over a smooth, projective toric variety $X(\Sigma)$.  We assume that $\AA = \bigoplus_{i \in I} \AA_i$, where each $\AA_i$ is coherent. This implies that $\AA_i$, and therefore $\AA$ itself, is locally free, and that we can treat $\AA$ as (the sheaf of sections of) a toric vector bundle. The \emph{total section ring} of $\AA$ is the direct sum:\\

\[\RR(\mathcal{A}) = \bigoplus_{\L \in \Pic(X(\Sigma))} H^0(X(\Sigma), \mathcal{A} \otimes \L) = \bigoplus_{\L \in \Pic(X(\Sigma)), i \in I} H^0(X(\Sigma), \mathcal{A}_i \otimes \L).\]\\ 

\noindent
The presence of the $T_N$ action allows us to split each space $H^0(X(\Sigma), \mathcal{A}_i \otimes \L)$ into a direct sum of isotypical spaces:\\

\[H^0(X(\Sigma), \mathcal{A}_i \otimes \L) = \bigoplus_{m \in M} H^0_m(X(\Sigma), \AA_i\otimes \L) .\]\\

\noindent
Let $A$ and $A_i$ be the fibers of $\AA$ and $\AA_i$, respectively, $i \in I$ over $Id \in T_N$. The fiber $A$ has the structure of a commutative algebra over $\K$, and there is a direct sum decomposition $A = \bigoplus_{i \in I} A_i$ which is compatible with $\AA = \bigoplus_{i \in I} \AA_i$. Let $n = |\Sigma(1)|$, and recall that there is an exact sequence:\\

\[0 \to M \to \Z^n \to \Pic(X(\Sigma)) \to 0\]\\

We choose a section $s: \Pic(X(\Sigma)) \to \Z^n$, and view $H^0_m(X(\Sigma), \AA_i\otimes \L)$ as the space of $T_N$ sections of $\AA \otimes \L_{\br}$, where $\L_{\br}$ is a particular $T_N-$linearization of $\L \in \Pic(X(\Sigma))$.  The space $H^0_{T_N}(X(\Sigma), \AA_i\otimes \L_{\br})$ can be computed as the \emph{Klaychko space} $F_{\br}(A_i) \subseteq A_i$, where $\br$ ranges over $\Z^n \cong M \oplus s(\Pic(X(\Sigma)))$ (see \cite[Section 3]{Kaveh-Manon-tvb}).  The spaces $F_{\br}(A_i)$ fit into a system of $n$ algebra filtrations $F^j$ of $A$, where $F_{\br}(A) = F^1_{r_1} \cap \cdots \cap F^n_{r_n}$, see \cite[Lemma 5.2]{Kaveh-Manon-tvb}. In this way, $R(\AA)$ is seen to be the \emph{Rees algebra} of the fiber $A$ with respect to the filtrations $F^j$:\\

\[R(\AA) = \bigoplus_{\br \in \Z^n} F_{\br}(A) = \bigoplus_{\br \in \Z^n, i \in I} F_{\br}(A_i)\]\\

\begin{remark}
Filtrations in the style of the $F^j$ are used by Biswas, Dey, and Poddar to classify \emph{toric $G$-principal bundles} in \cite{BDP1}. 
\end{remark}

If all the fibers of $\AA$ are finitely generated algebras over $\K$, then we can find a finite algebra generating set $\mathcal{B} \subset A$ called a \emph{Khovanskii basis} of $\AA$.  Roughly speaking, this set has the property that its limits continue to generate the fiber $\AA_p$ for every $p \in X(\Sigma)$. Let $\mathcal{B} = \{b_1, \ldots, b_m\}$ and $I \subset \K[x_1, \ldots, x_m]$ be the ideal such that $\K[\bx]/I \cong A$.  In \cite[Section 4]{Kaveh-Manon-tvb} it is shown that there is a subfan $\mathcal{K}(I)$ of the \emph{Gr\"obner fan} $\Sigma(I)$, and a collection of points $\omega_1, \ldots, \omega_n \in \mathcal{K}(I)$ such that $F^j$ is the \emph{weight filtration} of $A$ with respect to $\omega_j$.  The matrix $D = [\omega_1, \ldots, \omega_n]^t$ is called the \emph{diagram} of the family $\AA$ with respect to the Khovanskii basis $\mathcal{B}$. If all of the fibers $\AA_p$ are domains, these points lie in the tropical variety $\Trop(I) \subseteq \mathcal{K}(I)$.  This happens if $\Spec_{X(\Sigma)}(\AA)$ is a bundle. 
The following question is the algebraic analogue of asking when a space is a Mori dream space. 

\begin{question}\label{question-diagram}
For which arrangements $D = \{\omega_1, \ldots, \omega_n\} \subset \mathcal{K}(I)$ is the Rees algebra $\RR(A, D) \cong \RR(\AA)$ finitely generated?
\end{question}

Cox rings of projectivized toric vector bundles appear when the algebra $A$ is a polynomial ring  $\bigoplus_{\ell \geq 0} \Sym^\ell(E)$, the ideal is a linear ideal, say generated a subspace $L \subset \K^m$, and $\mathcal{K}(L)$ is the associated \emph{tropicalized linear space} $\Trop(L)$.  The direct sum decomposition is by $Sym$-degree in this case.  In this way, all toric vector bundles are associated to an arrangement of points on a tropicalized linear space.  In \cite{Kaveh-Manon-tvb} Kaveh and the second author introduce two methods for determining the finite generation of $\RR(\Sym(E), D)$.  The first is \cite[Algorithm 5.6]{Kaveh-Manon-tvb}, which builds a finite generating set of $\RR(\Sym(E), D)$, provided one exists.  The second is \cite[Theorem 1.6]{Kaveh-Manon-tvb}, which characterizes which sets of polynomials $\mathcal{B} \subset \Sym(E)$ lift to generating sets of $\RR(\Sym(E), D)$.  They also introduce a polyhedral fan $\Delta(L, \Sigma) \subset \Q^{m \times n}$ whose points are arrangements of $n$ points on $\Trop(L)$.  Question \ref{question-diagram} then asks which points on $\Delta(L, \Sigma)$ correspond to finitely generated Rees algebras of the polynomial ring $\Sym(E)$?  A polyhedral sufficient condition is given by \cite[Proposition 6.16]{Kaveh-Manon-tvb}.

\section{Cox rings of flag bundles and proofs of main results}\label{sec-main-semisimple}

In this section we introduce the necessary geometry for flag bundles and prove Proposition \ref{prop-main-semisimple} and Theorem \ref{thm-apartment}. We begin with a discussion of the \emph{full flag bundle} $\FL(\PP)$ of a toric $G$-principal bundle, and the total section ring $\RR(\P\PP)$.  

\subsection{The flag bundle of a toric $G$-principal bundle}\label{subsec-flagprincipal}

Fix a semisimple algebraic group $G$, and let $\pi: \PP \to X(\Sigma)$ be a toric $G$-principal bundle.  The group $G$ is an affine variety, making $\PP$ affine over $X(\Sigma)$.  Let $\O(\PP)$ be the corresponding sheaf of algebras over $X(\Sigma)$:\\

\[\Spec_{X(\Sigma)}(\O(\PP)) = \PP.\]\\

\noindent
In what follows we let $B_-, B_+ \subset G$ be a choice of Borel and opposite Borel subgroups, with maximal torus $T \subset G$. The set of weights and dominant weights are $\Lambda$ and $\Lambda_+$, respectively. Finally, $V_\lambda$ denotes the irreducible representation of $G$ with highest weight $\lambda \in \Lambda_+$.  The following can also be found in \cite{BDP1}.\\

\begin{proposition}
The sheaf $\O(\PP)$ has the following direct sum decomposition as a sheaf of right $G$-representations:\\

\[ \O(\PP) = \bigoplus_{\lambda \in \Lambda_+} \V_\lambda \otimes V_{\lambda^*},\]\\

\noindent
where $\V_\lambda$ is (the sheaf of sections of) a $T_N$ vector bundle of rank $\dim(V_\lambda)$.
\end{proposition}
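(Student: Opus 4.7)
The plan is to reduce this to the Peter–Weyl decomposition of $\K[G]$ applied fiberwise, and then upgrade the local decomposition to a global one by tracking how the transition data of the principal bundle interacts with each tensor factor. First I would recall that, as a $G\times G$-bimodule,
\[
\K[G] \;\cong\; \bigoplus_{\lambda\in\Lambda_+} V_\lambda \otimes V_{\lambda^*},
\]
where left translation acts on the first factor and right translation acts on $V_{\lambda^*}$. Since $\PP$ is a principal bundle, working Zariski-locally with a trivialization $\PP|_U \cong U \times G$ gives an identification $\O(\PP)|_U \cong \O_U \otimes \K[G]$ respecting the right $G$-action, and so on each chart we obtain the decomposition
\[
\O(\PP)|_U \;\cong\; \bigoplus_{\lambda\in\Lambda_+}\bigl(\O_U\otimes V_\lambda\bigr)\otimes V_{\lambda^*}.
\]

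Next I would glue these local decompositions. The transition functions for $\PP$ act on the fiber $G$ by left multiplication, so on $\K[G]$ they operate through the left $G$-action, i.e.\ they twist the $V_\lambda$ factor and act trivially on $V_{\lambda^*}$. Equivalently, these transitions are the transitions of the associated vector bundle $\V_\lambda := \PP \times^G V_\lambda$ on the first factor, while the second factor $V_{\lambda^*}$ is globally constant. This identifies the local decompositions as restrictions of a well-defined global decomposition $\O(\PP) = \bigoplus_\lambda \V_\lambda \otimes V_{\lambda^*}$ of sheaves of right $G$-representations, with each $\V_\lambda$ a locally free sheaf of rank $\dim V_\lambda$.

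Finally, I would install the torus equivariance. Because $\PP$ is a toric $G$-principal bundle, the $T_N$-action on $\PP$ commutes with the right $G$-action, so the induced $T_N$-action on $\O(\PP)$ is $G$-equivariant. Therefore it preserves each right $G$-isotypical component $\V_\lambda\otimes V_{\lambda^*}$, and since $V_{\lambda^*}$ carries the trivial $T_N$-action, the $T_N$-structure descends to each $\V_\lambda$, making it a $T_N$-equivariant vector bundle over $X(\Sigma)$ of the asserted rank.

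The main obstacle I anticipate is the gluing step: one must ensure that the fiberwise Peter–Weyl splitting is compatible with the cocycle data of $\PP$, i.e.\ that the transition functions of $\PP$ act within each isotypical summand rather than mixing them. This is handled by the observation above—right $G$-equivariance of the transitions forces them to respect the right $G$-isotypical decomposition and to act only through the associated-bundle construction on the $V_\lambda$ factor—so no further technicalities are needed once Peter–Weyl and the associated bundle functor are in hand.
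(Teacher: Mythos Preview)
Your argument is correct and follows essentially the same strategy as the paper: apply Peter--Weyl on each trivializing chart and then observe that the right $G$-isotypical decomposition is intrinsic, hence global. The only packaging difference is that the paper defines $\V_\lambda$ globally as the invariant sheaf $[\O(\PP)\otimes V_\lambda]^G$ and then checks on toric affine charts (using \cite[Theorem~4.1]{BDP1} for $T_N$-equivariant local triviality) that this is locally free of rank $\dim V_\lambda$; this sidesteps the explicit cocycle/gluing discussion you carry out, but the content is the same.
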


\begin{proof}
By assumption $\O(\PP)$ is itself a locally free sheaf equipped with a right, rational $G$-action.  Let $X(\sigma) \subset X(\Sigma)$ be a toric affine chart. By \cite[Theorem 4.1]{BDP1}, the pullback of $\PP$ to $X(\sigma)$ is the affine $T\times G$ scheme defined by the ring $\O(X(\sigma))\otimes \O(G)$.  We consider the sheaf $[\O(\PP)\otimes V_\lambda]^G$ restricted to $X(\sigma)$ for an irreducible $V_\lambda$.   We have that $[\O(\PP)\otimes V_\lambda]^G[X(\sigma)] \cong \O(X(\sigma))\otimes [\O(G)\otimes V_\lambda]^G \cong \O(X(\sigma))\otimes V_\lambda$, so that $\V_\lambda = [\O(\PP) \otimes V_\lambda]^G$ is a toric vector bundle of rank $\dim(V_\lambda)$. The sum of these components is a subsheaf of $\O(\PP)$. However, by above it coincides with the restriction of $\O(\PP)$ over each $X(\sigma)$. 
\end{proof}

The next definition is central to this section.

\begin{definition}
The flag bundle associated to a toric $G$-principal bundle is defined to be the quotient:\\

\[\FL(\PP) = \PP/B_+\]\\

\end{definition}

For any point $p: \Spec(\K) \to X(\Sigma)$, the fiber $\FL(\PP)_p$ is isomorphic to the flag variety $\FL(G) = G/B_+$.  Moreover, for a regular weight $\omega$ there is a graded sheaf of algebras $\Sym(\V_{\omega^*})$ on $X(\Sigma)$, and it is straightforward to show that $\FL(\PP)$ embeds as a closed $X(\Sigma)-$subscheme of $\P\V_\omega$.  In particular, $\FL(\PP)$ is a smooth, projective variety over $\K$, and up to isomorphism, $\FL(\PP)$ does not depend on the choice of Borel subgroup $B_+$.  Line bundles over $\FL(\PP)$ can be constructed from characters of the Borel $B_+$ as in the case of the flag variety $\FL(G)$. 

\begin{definition}
Let $\J_\chi$ be the line bundle $(\PP\times \A^1)/B_+$, where $B_+$ acts on the $\A^1$ component through the character $-\chi:B_+ \to \G_m$. 

\end{definition}

The restriction of $\J_\chi$ to each fiber $\FL(\PP)_p$ is isomorphic to the line bundle $\L_\chi = (G \times\A^1)/B_+$.

\begin{proposition}
For any character $\chi$ the pushforward $\pi_*\J_\chi$ is isomorphic to the subsheaf of $\O(\PP)^{B_+, -\chi} \subset \O(\PP)$ of sections which are semi-invariant with respect to the character $-\chi$. In particular, in the case of a dominant weight $\lambda \in \Lambda_+$, $\pi_*\J_\lambda \cong \V_\lambda$.
\end{proposition}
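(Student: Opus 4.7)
The first statement is a general fact about line bundles associated to characters on principal bundles, and I would prove it by unwinding the quotient construction defining $\J_\chi$. For an open $U \subset X(\Sigma)$, let $\PP|_U$ denote the restriction of the principal bundle over $U$; then $\J_\chi|_{\pi^{-1}(U)} = (\PP|_U \times \A^1)/B_+$ fibers over $\PP|_U/B_+ = \pi^{-1}(U)$. A section of $\J_\chi$ over $\pi^{-1}(U)$ is exactly a $B_+$-equivariant morphism $\PP|_U \to \A^1$, with $B_+$ acting on $\A^1$ through $-\chi$, which unpacks to a regular function $f \in \Gamma(\PP|_U, \O_\PP) = \Gamma(U, \O(\PP))$ satisfying the semi-invariance $f(p \cdot b) = (-\chi)(b) f(p)$. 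Since this is the defining condition of $\Gamma(U, \O(\PP)^{B_+, -\chi})$, assembling the open-set bijections yields the sheaf isomorphism $\pi_* \J_\chi \cong \O(\PP)^{B_+, -\chi}$.

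For the dominant specialization $\chi = \lambda \in \Lambda_+$, I would combine the above with the decomposition $\O(\PP) = \bigoplus_\mu \V_\mu \otimes V_{\mu^*}$ established in the previous proposition. Because the right $G$-action affects only the $V_{\mu^*}$ tensor factor, the semi-invariant subsheaf splits factorwise as
\[ \O(\PP)^{B_+, -\lambda} \cong \bigoplus_\mu \V_\mu \otimes V_{\mu^*}^{B_+, -\lambda}. \]
The space of $B_+$-semi-invariants in an irreducible finite-dimensional $G$-representation is at most one-dimensional (a unique extremal weight line), and the Borel--Weil theorem identifies this line in $V_{\mu^*}$ as having character $-\lambda$ precisely when $\mu = \lambda$. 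All other summands vanish, yielding $\pi_* \J_\lambda \cong \V_\lambda \otimes \K \cong \V_\lambda$.

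The main subtlety lies in this last step: one must track left/right actions and duals carefully so that the unique $B_+$-semi-invariant line of character $-\lambda$ sits in the $\mu = \lambda$ summand rather than in, say, $\mu = \lambda^*$. A sanity check over a single closed point $p \in X(\Sigma)$ reduces the question to the classical Borel--Weil isomorphism $H^0(\FL(G), \L_\lambda) \cong V_\lambda$ applied fiberwise, combined with the fact that $\V_\lambda$ has rank $\dim V_\lambda$; once these conventions align, the sheaf-theoretic statement follows formally.
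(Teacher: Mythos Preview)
Your argument is correct, but it proceeds along different lines than the paper. The paper's proof is a two-line reduction: it passes to a toric affine chart $X(\sigma)$ over which $\PP$ trivializes as $X(\sigma)\times G$, and then the statement becomes the classical identification $H^0(\FL(G),\L_\chi)=\O(G)^{B_+,-\chi}$ (which, for $\chi=\lambda$ dominant, is $V_\lambda$ by Borel--Weil). In other words, the paper checks the isomorphism locally after trivializing the bundle.

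You instead work globally. For the first isomorphism you invoke the standard associated-bundle description of sections of $(\PP\times\A^1)/B_+$ as $B_+$-equivariant functions on $\PP$, and for the specialization $\chi=\lambda$ you feed the decomposition $\O(\PP)=\bigoplus_\mu \V_\mu\otimes V_{\mu^*}$ through the semi-invariant functor and pick off the surviving summand. Both halves are valid: the first is the cleanest conceptual reason the statement is true (and arguably more transparent than localizing), while the second replaces an appeal to local triviality by a Peter--Weyl/Borel--Weil computation. The trade-off is that your second step forces you to track the left/right and dual conventions that you flag at the end, whereas the paper sidesteps this by reducing to a single known fiberwise statement. Either route is fine; the paper's is shorter, yours is more self-contained.
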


\begin{proof}
It suffices to pass to an affine chart $X(\sigma) \subset X(\Sigma)$.  In this case, we reduce to the fact that the global sections $H^0(\FL(G), \L_\chi)$ are computed as the subspace $\O(G)^{B_+, -\chi} \subset \O(G)$. 
\end{proof}

Next we find that line bundles on $\FL(\PP)$ are obtained from the $\J_\chi$ and pullbacks from $X(\Sigma)$. 

\begin{proposition}
For a toric $G$-principal bundle $\PP$ with associated flag bundle $\FL(\PP)$ we have:\\

\[\Pic(\FL(\PP)) \cong \Pic(\FL(G)) \times \Pic(X(\Sigma)).\]\\
\end{proposition}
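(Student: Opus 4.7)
The plan is to produce an explicit isomorphism $\Phi: \Pic(\FL(G)) \times \Pic(X(\Sigma)) \to \Pic(\FL(\PP))$ sending $(\L_\chi, \L) \mapsto \J_\chi \otimes \pi^*\L$, using the identification $\Pic(\FL(G)) \cong X(T)$ together with the line bundles $\J_\chi$ already constructed. Well-definedness is immediate from the definitions of $\J_\chi$ and of pullback.

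For injectivity, I would restrict a hypothetical trivial element $\J_\chi \otimes \pi^*\L \cong \O_{\FL(\PP)}$ to a fiber $\FL(\PP)_p \cong \FL(G)$. Since $\J_\chi|_{\FL(G)_p} \cong \L_\chi$ and $\pi^*\L|_{\FL(G)_p}$ is trivial, we get $\L_\chi \cong \O_{\FL(G)}$, which forces $\chi = 0$ because $\chi \mapsto \L_\chi$ is injective on $\FL(G)$. Then $\pi^*\L$ itself is trivial; since $\pi$ is proper with connected fibers and $\pi_*\O_{\FL(\PP)} = \O_{X(\Sigma)}$, the pullback $\pi^*$ is injective on Picard groups, so $\L$ is trivial too.

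For surjectivity, I would cover $X(\Sigma)$ by its smooth affine toric charts $X(\sigma)$ and exploit the local triviality already invoked in the proof of the preceding proposition: by \cite[Theorem 4.1]{BDP1}, the pullback of $\PP$ to $X(\sigma)$ is the trivial $T\times G$-scheme $\Spec(\O(X(\sigma))\otimes \O(G))$, so $\FL(\PP)|_{X(\sigma)} \cong X(\sigma) \times \FL(G)$. Because $X(\sigma)$ is a smooth affine toric variety it is isomorphic to $\A^{\dim \sigma}\times (\G_m)^{n - \dim \sigma}$ and in particular $\Pic(X(\sigma)) = 0$, giving $\Pic(\FL(\PP)|_{X(\sigma)}) \cong \Pic(\FL(G))$. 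For any $\mathcal{M} \in \Pic(\FL(\PP))$ the restriction $\mathcal{M}|_{X(\sigma)\times \FL(G)}$ is therefore pulled back from a unique character $\chi_\sigma$ on each chart. Restricting further to a fiber over a point of $X(\sigma)\cap X(\tau)$ shows the character is independent of the chart, so there is a well-defined $\chi \in X(T)$ with $\mathcal{M}|_{X(\sigma)\times \FL(G)} \cong \J_\chi|_{X(\sigma) \times \FL(G)}$ for every $\sigma$.

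Finally, the line bundle $\mathcal{M}' := \mathcal{M} \otimes \J_{-\chi}$ is trivial on every fiber $\FL(\PP)_p$ and pulled back from the base on every chart $X(\sigma)$. Standard cohomology and base change (using that $H^0(\FL(G), \O) = \K$ and $\pi$ is flat and proper) then guarantees that $\pi_*\mathcal{M}'$ is a line bundle on $X(\Sigma)$ and the adjunction map $\pi^*\pi_*\mathcal{M}' \to \mathcal{M}'$ is an isomorphism, exhibiting $\mathcal{M}'$ as $\pi^*\L$ for some $\L \in \Pic(X(\Sigma))$. Hence $\mathcal{M} \cong \J_\chi \otimes \pi^*\L$, completing surjectivity. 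The main obstacle is controlling how line bundles glue between overlapping charts; the local triviality of $\PP$ on smooth toric affines combined with the vanishing $\Pic(X(\sigma)) = 0$ is the key input that makes this clean.
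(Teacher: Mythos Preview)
Your proposal is correct and follows essentially the same strategy as the paper: define the explicit map $(\L_\chi,\L)\mapsto \J_\chi\otimes\pi^*\L$, get injectivity by restricting to a fiber and using injectivity of $\pi^*$, and get surjectivity by twisting an arbitrary line bundle by $\J_{-\chi}$ to make it fiberwise trivial and then recognizing it as a pullback.

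The one genuine difference is in how you pin down the character $\chi$. The paper argues that the Picard \emph{variety} of $\FL(G)$ is trivial, so $\Pic(\FL(G))$ is discrete and hence the isomorphism class of the restriction to fibers is locally constant on $X(\Sigma)$, giving a single $\chi$. You instead use the toric local triviality of $\PP$ on smooth affine charts $X(\sigma)$ together with $\Pic(X(\sigma))=0$ to read off $\chi_\sigma$ chart by chart and then match them on overlaps. Both are valid; the paper's route is shorter and works for any base, while yours is more explicit and also supplies the ``fiberwise trivial $\Rightarrow$ pullback'' step (via $\pi_*\O_{\FL(\PP)}=\O_{X(\Sigma)}$ and base change) that the paper simply asserts.
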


\begin{proof}
Define $\Pic(\FL(G)) \times \Pic(X(\Sigma)) \to \Pic(\FL(\PP))$ by $\L_\chi, D_\psi \to \J_\chi \otimes \pi^*D_\psi$.  If a pair of bundles is sent to the trivial bundle under this map, then by restriction to any fiber $\FL(\PP)_p$ we conclude that $\chi = 0$. As pullback is an injection, the pair must be trivial. Now let $\J$ be any line bundle on $\FL(\PP)$, and let $\L$ be the restriction to some fiber of $\pi$.  The Picard variety of $\FL(G)$ is trivial, so the restriction to any fiber of $\pi$ is isomorphic to a fixed line bundle $\L_\chi \in \Pic(\FL(G))$. The line bundle $\J_\chi^{-1}\otimes \J$ is then trivial on every fiber, so it must be a pullback from $X(\Sigma)$. 
\end{proof}

Now we turn to computing the global sections of elements of $\Pic(\FL(\PP))$.  
\begin{proposition}\label{prop-sectionprojection}
Let $\J$ be a line bundle over a toric flag bundle $\FL(\PP)$, then $H^0(\FL(\PP), \J) \cong H^0(X(\Sigma), \pi_*\J)$.  In particular, if $\J \cong \J_\chi \otimes \pi^*\L$ for $\L \in \Pic(X(\Sigma))$, then:\\  

\[H^0(\FL(\PP), \J) = H^0(X(\Sigma), \V_\chi\otimes \L).\]\\

\end{proposition}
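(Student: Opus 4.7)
The plan is to deduce both statements from a single, standard tower of identifications: the adjunction between global sections and pushforward, the projection formula, and the preceding proposition that already identified $\pi_*\J_\chi$ with $\V_\chi$.

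For the general claim $H^0(\FL(\PP), \J) \cong H^0(X(\Sigma), \pi_*\J)$, I would invoke the definition of pushforward directly. For any morphism of schemes $\pi: Y \to X$ and any sheaf $\J$ on $Y$, one has $(\pi_*\J)(X) = \J(\pi^{-1}(X)) = \J(Y)$, which is just $H^0(Y, \J)$. In our setting $\pi: \FL(\PP) \to X(\Sigma)$ is proper with connected fibers isomorphic to $\FL(G)$, but no properness is needed here: this identification is formal from the definition of the direct image sheaf. I would write this as a one-line justification.

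For the specialization to $\J = \J_\chi \otimes \pi^*\L$, I would combine three ingredients. First, apply the general identity above to reduce to computing $H^0(X(\Sigma), \pi_*(\J_\chi \otimes \pi^*\L))$. Second, since $\L$ is a line bundle and in particular locally free of finite rank, the projection formula yields
\[
\pi_*(\J_\chi \otimes \pi^*\L) \;\cong\; (\pi_*\J_\chi) \otimes \L.
\]
Third, the preceding proposition identifies $\pi_*\J_\chi \cong \V_\chi$ in the dominant case (with the evident ``semi-invariant subsheaf'' description for general $\chi$), so that
\[
H^0(\FL(\PP), \J_\chi \otimes \pi^*\L) \;\cong\; H^0\bigl(X(\Sigma), \V_\chi \otimes \L\bigr),
\]
as desired.

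There is essentially no hard step: the content is entirely packaged into the preceding proposition on $\pi_*\J_\chi$ and the projection formula. The only point requiring care is making sure that what we call $\V_\chi$ is correctly interpreted when $\chi$ is not dominant; in that case, passing to a fiber and invoking Borel--Weil shows $\pi_*\J_\chi = 0$, and the formula holds trivially since both sides vanish. Thus the only concept-level input beyond definitions is the projection formula, whose hypothesis (local freeness of $\L$) is automatic for a line bundle, and I would spend at most a sentence verifying this.
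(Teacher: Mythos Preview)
Your argument is correct, and in fact cleaner than the paper's at one point. For the first claim, the paper argues via Kempf's vanishing theorem: it shows that the fiberwise line bundle $\L_\chi$ has $H^i(\FL(G),\L_\chi)=0$ for $i>0$, hence $R^i\pi_*\J=0$, and from this deduces $H^0(\FL(\PP),\J)\cong H^0(X(\Sigma),\pi_*\J)$. As you observe, this last identification needs no cohomological vanishing at all: it is the tautology $\Gamma(Y,\J)=\Gamma(X,\pi_*\J)$, valid for any morphism and any sheaf (equivalently, it is the $(0,0)$-edge map of the Leray spectral sequence, which is always an isomorphism). So your one-line justification is entirely adequate, and the Kempf input is not required for the statement as written. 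For the second claim you and the paper proceed identically, via the projection formula and the preceding proposition identifying $\pi_*\J_\chi$ with $\V_\chi$; your remark on the non-dominant case (both sides vanish by Borel--Weil) matches the paper's implicit convention. The only thing the paper's route buys is the vanishing of higher direct images, which may be useful downstream but is not part of this proposition.
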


\begin{proof}
There is some character $\chi$ such that $\J\!\!\!\mid_p \cong \L_\chi$ as line bundles on $\FL_p \cong \FL(G)$ for any $p \in X(\Sigma)$. If $\J$ is effective, then the restriction to some fiber of $\pi$ must also be effective, so $\chi$ is a dominant weight. By Kempf's vanishing theorem \cite{Kempf} for line bundles on flag varieties we have $H^i(\FL(G), \L_\chi) = 0$ for $i > 0$, so $\mathcal{R}^i\pi_*\J = 0$. As a consequence, we have $H^0(\FL, \J) \cong H^0(X(\Sigma), \pi_*\J)$.  If $\FL = \FL(\PP)$ and $\J = \J_\chi \otimes \pi^*\L$, we have $H^0(X(\Sigma),\pi_*\J) = H^0(X(\Sigma), \V_\chi \otimes \L)$ by the projection formula.
\end{proof}

By abuse of notation we let $\RR(\PP)$ denote the total section ring of $\O(\PP)$. 
Let $U \subset B$ denote the unipotent radical of a Borel subgroup $B$.  In particular, $U_\pm$ denotes the unipotent radical of $B_\pm$.  The following observation allows us to compute the Cox ring of $\FL(\PP)$. 

\begin{theorem}[Proposition \ref{prop-main-semisimple}]\label{thm-coxringunipotent}
Let $\PP$ be a toric $G$-principal bundle, then:

\[\RR(\FL(\PP)) \cong \RR(\PP)^{U_+} \cong \bigoplus_{\lambda \in \Lambda, \br \in \Z^n} F_{\br}(\V_\lambda).\]

\noindent
Moreover, $\FL(\PP)$ is a Mori dream space if and only if $\RR(\PP)$ is finitely generated. 
\end{theorem}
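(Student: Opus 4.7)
The strategy is to compute $\RR(\FL(\PP))$ directly from the Picard decomposition and section formula already established, recognize the result as $\RR(\PP)^{U_+}$ via the Peter--Weyl structure of $\O(\PP)$, pass to the Klyachko-space expression using Section \ref{sec-rees}, and finally deduce the Mori dream space equivalence from classical invariant theory for reductive groups.

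First, by the previous proposition, $\Pic(\FL(\PP)) \cong \Lambda \times \Pic(X(\Sigma))$ is generated by the line bundles $\J_\chi$ and pullbacks $\pi^*\L$. Proposition \ref{prop-sectionprojection} identifies $H^0(\FL(\PP),\J_\chi\otimes\pi^*\L)$ with $H^0(X(\Sigma),\V_\chi\otimes\L)$ when $\chi \in \Lambda_+$, while this space vanishes for non-dominant $\chi$, since Borel--Weil on each fiber gives $\pi_*\J_\chi = 0$. In parallel, the right $G$-equivariant decomposition $\O(\PP) = \bigoplus_{\lambda \in \Lambda_+}\V_\lambda \otimes V_{\lambda^*}$ yields $\O(\PP)^{U_+} \cong \bigoplus_{\lambda \in \Lambda_+}\V_\lambda$, because the $U_+$-fixed subspace of each irreducible $V_{\lambda^*}$ is the one-dimensional highest weight line. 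Since $U_+$-invariants commute with global sections, summing over $\Pic(X(\Sigma))$ identifies $\RR(\PP)^{U_+}$ with $\RR(\FL(\PP))$. Substituting the Klyachko-space description $H^0(X(\Sigma),\V_\lambda\otimes\L_\br) = F_\br(\V_\lambda)$ recalled in Section \ref{sec-rees}, and extending trivially by zero over non-dominant $\lambda$, produces the third expression in the theorem.

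For the Mori dream space statement, $\FL(\PP)$ is smooth and projective, so it is a Mori dream space precisely when $\RR(\FL(\PP))$ is finitely generated. The right $G$-action on $\PP$ endows $\RR(\PP)$ with a rational $G$-action by algebra automorphisms, and for a reductive group acting rationally on a commutative $\K$-algebra $R$ one has the classical equivalence that $R$ is finitely generated over $\K$ if and only if $R^{U_+}$ is. The forward direction is Had\v{z}iev's theorem, and the converse follows because a $G$-stable subalgebra $R' \subseteq R$ containing a set of algebra generators of $R^{U_+}$ must satisfy $(R')^{U_+} = R^{U_+}$, forcing $R' = R$ since each isotypic multiplicity in $R$ is determined by the weight space decomposition of $R^{U_+}$.

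The main obstacle I expect is making the invariant-theoretic reduction in the last paragraph watertight in the multigraded setting. One must verify that the $G$-action preserves the $\Pic(X(\Sigma))$-grading on $\RR(\PP)$, that the action is rational and locally finite so the isotypic decomposition is available component by component, and that the Had\v{z}iev--Grosshans equivalence transfers cleanly to the graded $\K$-algebra level. Once these checks are completed, everything else is direct bookkeeping of the identifications laid out above.
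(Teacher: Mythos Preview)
Your proposal is correct and follows essentially the same route as the paper: both compute $\RR(\FL(\PP))$ from Proposition~\ref{prop-sectionprojection} and the Peter--Weyl decomposition of $\O(\PP)$, then reduce the Mori dream space equivalence to the classical transfer theorem that $R$ is finitely generated if and only if $R^{U_+}$ is. The only cosmetic difference is that the paper cites \cite[Theorem 16.2]{Grosshans} directly and sketches the harder implication via the horospherical contraction $[\RR(\PP)^{U_+}\otimes\O(G)^{U_-}]^T$ realized as an associated graded of $\RR(\PP)$, whereas you sketch it via the $G$-subalgebra generated by a finite generating set of $R^{U_+}$; both arguments are standard and equivalent in strength.
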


\begin{proof}
By Proposition \ref{prop-sectionprojection}, $\RR(\FL(\PP))$ is the space of global sections of the sheaf of algebras $\O(\PP)^{U_+} \cong \bigoplus_{\lambda, \br} \V_\lambda\otimes \L_\br$ on $X(\Sigma)$.  Now take global sections to get $\RR(\FL(\PP)) = R(\PP)^{U_+}$. Next, we observe that the $U_+$ action on $\RR(\PP)$ extends to the $G$-action; as a consequence, $\RR(\PP)$ is finitely generated if and only if $\RR(\PP)^{U_+}$ is finitely generated by \cite[Theorem 16.2]{Grosshans}. In particular, if $\RR(\PP)^{U_+}$ is finitely generated, then the invariant ring $[\RR(\PP)^{U_+}\otimes \O(G)^{U_-}]^T$ is also finitely generated.  The latter is the \emph{horospherical contraction} of the $G$-algebra $\RR(\P\PP)$, \cite{HMM}.  The algebra $[\RR(\PP)^{U_+}\otimes \O(G)^{U_-}]^T$ can be realized as an associated graded algebra of $\RR(\PP)$, so if it is finitely generated, $\RR(\PP)$ is finitely generated as well. 
\end{proof}

Let $Q \supset B_+$ be a parabolic subgroup of $G$, and let $\Lambda(Q) \subset \Lambda$ be the lattice of weights corresponding to characters of $Q$.  The Cox ring of the partial flag bundle $\FL_Q(\PP) = \PP/Q$ is the subsum of $\RR(\FL(\PP))$ where $\lambda \in \Lambda(Q)$.  In particular, $\RR(\FL_Q(\PP))$ is the ring of invariants in $\RR(\FL(\PP))$ with respect to a certain algebraic torus.  It follows that if $\RR(\PP)$ is finitely generated, each partial flag bundle $\FL_Q(\PP)$ is a Mori dream space. We can also use a presentation of $\RR(\FL(\PP))$ to find a presentation of $\RR(\PP)$. 

\begin{corollary}
Let $\Omega \subset \Lambda_+$ be a set of dominant weights whose associated graded components generate $\RR(\FL(\PP))$, then the corresponding components also generate $\RR(\PP)$.
\end{corollary}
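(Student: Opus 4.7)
The plan is to use a standard Borel--Weil-type argument: since $G$ is reductive, a $G$-stable subalgebra of $\RR(\PP)$ is determined by its $U_+$-invariants, and by Theorem \ref{thm-coxringunipotent} those invariants form exactly $\RR(\FL(\PP))$, where the hypothesis lives.

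First, I would let $S \subseteq \RR(\PP)$ be the subalgebra generated by the components $\bigoplus_{\br \in \Z^n} F_\br(\V_\lambda \otimes V_{\lambda^*})$ for $\lambda \in \Omega$. The right $G$-action on $\PP$ commutes with the left $T_N$-action, so it preserves every filtration piece $F_\br$ and every isotypic sheaf $\V_\lambda \otimes V_{\lambda^*}$ appearing in the decomposition of $\O(\PP)$ from Proposition 3.3. Hence each generator is a $G$-submodule, and $S$ is a $G$-stable subalgebra of $\RR(\PP)$. Next I would compute $S^{U_+}$: since $U_+$ acts trivially on the toric factor $\V_\lambda$ and $(V_{\lambda^*})^{U_+}$ is the one-dimensional highest-weight line, we have $[F_\br(\V_\lambda\otimes V_{\lambda^*})]^{U_+} \cong F_\br(\V_\lambda)$. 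Therefore $S^{U_+}$ contains the graded components of $\RR(\PP)^{U_+}\cong \RR(\FL(\PP))$ indexed by $\lambda \in \Omega$, and by hypothesis these generate all of $\RR(\FL(\PP))$, giving $S^{U_+} = \RR(\PP)^{U_+}$.

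Finally, I would invoke the exactness of $U_+$-invariants on rational $G$-modules when $G$ is reductive. Because $G$ is semisimple, every rational $G$-module is completely reducible, so applying $(\cdot)^{U_+}$ to the short exact sequence $0 \to S \to \RR(\PP) \to \RR(\PP)/S \to 0$ of rational $G$-modules yields $(\RR(\PP)/S)^{U_+} = \RR(\PP)^{U_+}/S^{U_+} = 0$. Every nonzero rational $G$-module contains an irreducible submodule, which has a nonzero highest-weight vector fixed by $U_+$, so $(\RR(\PP)/S)^{U_+}=0$ forces $\RR(\PP)/S = 0$, i.e.\ $S = \RR(\PP)$.

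The only real obstacle is verifying that the $G$-module structure on the full (infinite) direct sum $\RR(\PP)$ is rational and locally finite, so that complete reducibility (and hence exactness of $(\cdot)^{U_+}$) applies; this is immediate from Proposition 3.3, which presents $\RR(\PP)$ as a direct sum whose $G$-action on each summand factors through the finite-dimensional rational representation $V_{\lambda^*}$.
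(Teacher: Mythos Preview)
Your argument is correct. The paper's own proof is a one-line citation: it invokes \cite[Theorem 1.6]{Grosshans} (for a $G$-algebra $R$, the isotypic components that generate $R^{U_+}$ already generate $R$) and then applies it with $R = \RR(\PP)$ via the identification $\RR(\FL(\PP)) \cong \RR(\PP)^{U_+}$ from Theorem~\ref{thm-coxringunipotent}. What you have written is precisely a self-contained proof of that cited lemma in this setting: you build the $G$-stable subalgebra $S$, compare $U_+$-invariants, and use the standard highest-weight-vector argument (equivalently, exactness of $(\cdot)^{U_+}$ on locally finite rational $G$-modules) to force $S = \RR(\PP)$. So the two proofs are the same in spirit; yours simply unpacks Grosshans's statement rather than quoting it, which makes the write-up longer but removes a black-box citation.
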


\begin{proof}
For a $G$-algebra $R$, the components which generate $R^{U_+}$ also generate $G$ \cite[Theorem 1.6]{Grosshans}, \cite{HMM}.  Now take $R = \RR(\PP)$ and use Theorem \ref{thm-coxringunipotent}. 
\end{proof}

\subsection{Mori dream flag bundles and apartments}\label{subsec-apartments}

We let $\BB(G)$ denote the polyhedral complex formed by replacing simplicies with simplicial cones in the construction of the spherical building for a semisimple group $G$. The space $\BB(G)$ plays a prominent role in the classification of toric $G$-principal bundles in \cite{Kaveh-Manon-building}. Recall that $\BB(G)$ is a union of distinguished subcomplexes called \emph{apartments}, \cite{Abramenko}. For the notion of an integral piecewise linear map $\Phi: |\Sigma| \to \BB(G)$ see \cite[Definition 2.1]{Kaveh-Manon-building}. 

\begin{theorem}\label{thm-KM-building}[Kaveh, M]
The information of a framed toric $G$-principal bundle $\pi: \PP \to X(\Sigma)$ is equivalent to an integral piecewise-linear map $\Phi: |\Sigma| \to \BB(G)$ with the property that for any face $\sigma \in \Sigma$, the  restriction $\Phi\!\!\mid_\sigma: |\sigma| \to \BB(G)$ is linear, and its image lies in some apartment.
\end{theorem}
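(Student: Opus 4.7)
The plan is to generalize Klyachko's classification of toric vector bundles by parabolic filtration data to the setting of arbitrary semisimple $G$, with the role of the ``variety of filtrations'' played by the building $\BB(G)$. Explicitly, I would construct mutually inverse functors between framed toric $G$-principal bundles on $X(\Sigma)$ and integral piecewise-linear maps $\Phi: |\Sigma| \to \BB(G)$ satisfying the stated apartment and linearity conditions. The building and framing data enter because a framing at the torus-fixed point of an affine chart $X(\sigma)$ rigidifies $\PP$ there, while points of $\BB(G)$ parametrize parabolic reductions and apartments correspond to maximal tori $T \subset G$.

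For the forward direction, starting from a framed $\PP$, I would restrict to each affine chart $X(\sigma)$. The $T_N$-action preserves the fixed point $x_\sigma$, and the framing identifies the fiber $\PP_{x_\sigma}$ with $G$. The $T_N$-equivariant structure then produces a homomorphism of cocharacter lattices $N \to X_*(T_\sigma)$ for some maximal torus $T_\sigma \subset G$, defined up to the action of the Weyl group and a compatible choice of $T_\sigma$. Extending by $\Q$-linearity gives an integral linear map $|\sigma| \to A(T_\sigma) \subset \BB(G)$, where $A(T_\sigma)$ is the apartment associated to $T_\sigma$. By construction the image lies in a single apartment, and consistency across cones follows from compatibility of the $T_N$-actions under the inclusions $X(\tau) \hookrightarrow X(\sigma)$ for $\tau$ a face of $\sigma$.

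For the backward direction, given $\Phi$, the image $\Phi(|\sigma|)$ lies in an apartment $A(T_\sigma)$ by hypothesis. The integral linear map $|\sigma| \to A(T_\sigma)$ dualizes to a cocharacter $N \to X_*(T_\sigma)$, producing a $T_N$-equivariant $T_\sigma$-principal bundle on $X(\sigma)$, which I then induce up to a $G$-principal bundle via the inclusion $T_\sigma \hookrightarrow G$. Gluing across overlaps $X(\sigma) \cap X(\tau) = X(\sigma \cap \tau)$ is enabled by the fact that $\Phi|_\sigma$ and $\Phi|_\tau$ agree on $\sigma \cap \tau$, together with the observation that any two apartments of $\BB(G)$ meeting in a cone of positive dimension share a common subcomplex in which this image sits, so the resulting transition cocycles take values in the stabilizer of that subcomplex.

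The main obstacle is justifying the single-apartment condition as the precise characterization of which piecewise-linear maps arise from principal bundles. This condition asserts that the parabolic reductions prescribed by the rays of a cone $\sigma$ can be realized by cocharacters of a common maximal torus $T_\sigma$, which is exactly the simultaneous-diagonalization condition needed to trivialize $\PP|_{X(\sigma)}$ as a $T_N$-equivariant $G$-torsor. Making this rigorous requires input from Bruhat-Tits theory, reducing to the statement that cocharacters whose convex hulls lie in a common apartment actually coincide in that apartment up to the Weyl group. For $G = \GL_n$ this should recover Klyachko's compatibility condition on filtrations along a cone, providing a useful sanity check and a template for the general argument.
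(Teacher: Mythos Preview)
The paper does not prove this theorem. It is stated with the attribution ``[Kaveh, M]'' and cited from \cite{Kaveh-Manon-building}; no argument appears in the present paper, which simply invokes the classification as a tool (see the sentence immediately preceding the theorem and the reference to \cite[Definition 2.1]{Kaveh-Manon-building}). So there is no ``paper's own proof'' to compare your proposal against.

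That said, your outline is broadly in the spirit of the actual proof in \cite{Kaveh-Manon-building} and of the parallel classification of Biswas--Dey--Poddar \cite{BDP1}: one direction extracts cocharacter data from the $T_N$-action on fibers over torus-fixed points, and the other reconstructs the bundle from that data by gluing over affine charts. Two points in your sketch would need tightening before it becomes a proof. First, the role of the \emph{framing} is more specific than you indicate: it is what pins down a well-defined point of $\BB(G)$ (rather than only a Weyl-orbit of cocharacters), and without it the map $\Phi$ is only defined up to the $G$-action on the building. Second, your gluing step leans on the claim that ``any two apartments of $\BB(G)$ meeting in a cone of positive dimension share a common subcomplex in which this image sits,'' which is not quite the right building-theoretic input; what one actually uses is the standard building axiom that any two simplices lie in a common apartment, together with the compatibility of the piecewise-linear map on faces, to produce transition data in the appropriate parabolic subgroups. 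These are fixable, but as written the backward direction is more of a plausibility argument than a proof.
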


\noindent
The integral points of $\BB(G)$ are viewed as $1-$parameter subgroups of $G$ under a certain equivalence relation (\cite[Definition 1.8]{Kaveh-Manon-building}). The apartments $A \subseteq \BB(G)$ correspond to the maximal tori $T \subseteq G$, and the integral points of the apartment associated to a particular torus $T$ are its set of cocharacters $\mathcal{X}^\vee(T)$. The image of $\Phi$ is determined by the image of the rays of $\Sigma$, and in particular the ray generators $p_1, \ldots, p_n \in |\Sigma(1)| \subset N$. We let $\rho_i = \Phi(p_i)$.  Each point $\Phi(p_i)$ corresponds to a particular $1-$parameter subgroup of $G$. The following should be compared with \cite[Proposition 5.5]{Kaveh-Manon-tvb}. 

\begin{theorem}[Theorem \ref{thm-apartment}]
Let $\PP$ be a toric $G$-principal bundle over $X(\Sigma)$ corresponding to the piecewise-linear map $\Phi$, and suppose further that there is an apartment $A \subset \BB(G)$ such that $\Phi(|\Sigma|) \subset A$, then $\FL(\PP)$ is a Mori dream space. 
\end{theorem}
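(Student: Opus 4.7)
The plan is to invoke Theorem \ref{thm-coxringunipotent}, which reduces showing that $\FL(\PP)$ is a Mori dream space to proving finite generation of the total section ring $\RR(\PP)$. The apartment hypothesis should force the Klyachko filtrations on the fiber $\O(G)$ to be weight filtrations for a single torus, making $\RR(\PP)$ an explicit multigraded algebra supported on a rational polyhedral cone.

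By Theorem \ref{thm-KM-building}, the assumption $\Phi(|\Sigma|)\subseteq A$ means each ray image $\rho_j=\Phi(p_j)$ is a cocharacter of the maximal torus $T\subseteq G$ associated to the apartment $A$. Tracing through the classification of \cite{Kaveh-Manon-building}, this should exhibit a reduction of structure group of $\PP$ to $T$, so that the $n$ Klyachko filtrations $F^j$ on the fiber $\O(G)$ of $\O(\PP)$ are weight filtrations for a common $T$-action on $\O(G)$. Writing the $T$-weight decomposition $\O(G)=\bigoplus_{\mu}\O(G)_\mu$, we have $F^j_s(\O(G))=\bigoplus_{\langle\mu,\rho_j\rangle\geq s}\O(G)_\mu$, and so
\[F_\br(\O(G))=\bigoplus_{\mu\,:\,\langle\mu,\rho_j\rangle\geq r_j\;\forall j}\O(G)_\mu.\]
Thus $\RR(\PP)=\bigoplus_{\br\in\Z^n}F_\br(\O(G))$ is $(\mathcal{X}(T)\oplus\Z^n)$-graded, with the $(\mu,\br)$-component equal to $\O(G)_\mu$ exactly when $(\mu,\br)$ lies in the rational polyhedral cone $P=\{(\mu,\br):\langle\mu,\rho_j\rangle\geq r_j\;\forall j\}$.

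Finite generation of $\RR(\PP)$ then follows by exhibiting a finite generating set. Choose $T$-weight vectors $g_1,\ldots,g_N$ generating the affine algebra $\O(G)$, with $g_i\in\O(G)_{\mu_i}$, and lift each to $\tilde g_i\in\RR(\PP)$ of multidegree $\br^i_j:=\langle\mu_i,\rho_j\rangle$. Lift also $1\in\O(G)_0$ to elements $y_j\in\RR(\PP)$ of multidegree $-e_j$ for $j=1,\ldots,n$, which is legal since $(0,-e_j)\in P$. Any homogeneous element of bi-degree $(\mu,\br)\in P$ expands as a $\K$-linear combination of monomials $g_1^{\alpha_1}\cdots g_N^{\alpha_N}$ with $\sum_i\alpha_i\mu_i=\mu$; each such monomial lifts to $\prod_i\tilde g_i^{\alpha_i}$ of multidegree $\br^\mu:=(\langle\mu,\rho_j\rangle)_j$, and multiplying by $\prod_j y_j^{\br^\mu_j-r_j}$ (legal because $\br^\mu\geq\br$ componentwise) slides the multidegree down to $\br$. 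Hence $\{\tilde g_1,\ldots,\tilde g_N,y_1,\ldots,y_n\}$ generates $\RR(\PP)$.

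The main technical hurdle will be justifying the weight-filtration description of the $F^j$ from the apartment hypothesis, which requires carefully invoking the correspondence between integral points of $\BB(G)$ and $1$-parameter subgroups from \cite{Kaveh-Manon-building} and exhibiting the reduction of structure group of $\PP$ to $T$. Once this identification is in hand, the remainder is a direct semigroup-algebra computation in the spirit of the vector bundle case treated in \cite[Proposition 5.5]{Kaveh-Manon-tvb}.
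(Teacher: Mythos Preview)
Your proposal is correct and follows essentially the same route as the paper: reduce via Theorem~\ref{thm-coxringunipotent} to finite generation of $\RR(\PP)$, use the apartment hypothesis to identify all the Klyachko filtrations on $\O(G)$ as weight filtrations for a single torus $T$, and then exhibit an explicit finite generating set by lifting $T$-homogeneous generators of $\O(G)$ together with the degree-shift elements $1\in F_{-e_j}$. The paper's argument is nearly identical in structure, differing only in notation (it writes the weight decomposition of $\O(G)$ via the Peter--Weyl decomposition $\bigoplus V_\alpha(\lambda)\otimes V(\lambda^*)$ and cites \cite{BDP1} for the filtration description rather than framing it as a reduction of structure group).
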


\begin{proof}
By Theorem \ref{thm-coxringunipotent} we must show that $\RR(\PP)$ is finitely generated. Let $T \subset G$ be the torus with $A = \mathcal{X}^\vee(T)$.  Following Section \ref{sec-rees}, the information $\pi: \PP \to X(\Sigma)$ is captured by a collection of $n = |\Sigma(1)|$ filtrations on the coordinate ring $\K[G]$. These filtrations are determined by the images $\Phi(p_i) \in A \subset \BB(G)$. To describe these filtrations we equip the representations of $G$ with the dominant weight classification given by the weights of $T$. The filtration $F^i$ determined by $\Phi(p_i)$ is composed of the spaces:\\

\[F^i_r = \bigoplus_{\langle \Phi(p_i), \alpha\rangle \geq r} V_\alpha(\lambda)\otimes V(\lambda^*),\]\\

\noindent
where $\alpha$ is a weight, $\lambda$ is a dominant weight, $V(\lambda)$ is the irreducible representation corresponding to $\lambda$, and $V_\alpha(\lambda) \subseteq V(\lambda)$ is the $\alpha$ weight space. These filtrations also appear in the classification theorem of Biswas, Dey, and Poddar in \cite{BDP1}. 

The assumption that $\Phi(p_i) \in A$ implies that the filtrations $F^i$ are all composed of the \emph{same weight spaces}. In particular, for any $r_1, \ldots, r_n \in \Z$ we have:\\

\[F_{\br} = F^1_{r_1} \cap \cdots \cap F^n_{r_n} = \bigoplus_{\langle \Phi(p_i), \eta \rangle \geq r_i} V_\eta(\lambda) \otimes V(\lambda^*).\]\\

Let $b_j \in V_{\eta_j}(\lambda_j)\otimes V(\lambda_j^*)$ for $1 \leq j \leq m$ be $T$-homogeneous generators of $\K[G]$, and let $\tilde{b}_j \in F_{\bs_j}$ be the corresponding lifts to $R(\PP)$, where $s_{ij} = \langle \Phi(p_i), \eta_j \rangle$.  The monomials $b^a$ with $\sum a_j\eta_j = \eta$ span the $\eta$-isotypical component of $\K[G]$. It follows that the monomials $b^a$ with $\langle \Phi(p_i), \sum a_j\eta_j \rangle \geq r_i$ for each $i \in [n]$ span $F_{\br} \subset \K[G]$.

Now define $x_i \in \RR(\PP)$ to be $1 \in F_{-\be_i} \subset \K[G]$. Multiplication by $x_i$ works as inclusion $x_iF_{\br} \subseteq F_{\br - \be_i}$.  Take any monomial $b^a \in F_{\br}$, and consider $\tilde{b}^a \in F_{\sum a_j \eta_j}$.  We must have $\langle \Phi(p_i), \sum a_j\eta_j \rangle \geq r_i$, so $\langle \Phi(p_i), \sum a_j\eta_j \rangle - r_i= d_i \in \Z_{\geq 0}$.  It follows that $\tilde{b}^a x^d = b^a \in F_{\br}$. As a consequence, $\{\tilde{b}_1,\ldots, \tilde{b}_m, x_1, \ldots, x_n\}$ is a generating set for $R(\PP)$.  
\end{proof}

In the case of a toric vector bundle $\E$, the condition that $\{\rho_1, \ldots, \rho_n\}$ lie in a single apartment corresponds to the existence of $T$-splitting $\E \cong \bigoplus_{i =1}^r \L_i$ into a direct sum of line bundles.  It is observed in \cite{GHPS} that the projectivization of such a bundle is in fact a toric variety, and therefore a Mori dream space.  We give the splitting condition corresponding to the image of $\Phi$ lying in a single apartment for a general group $G$.  For a $G$-representation $V$ we let $\E_V = \PP\times_G V$ denote the associated toric vector bundle over $X(\Sigma)$.

\begin{proposition}
Let $\PP$ be a toric $G$-principal bundle over $X(\Sigma)$ corresponding to the piecewise-linear map $\Phi$, then the image of $\Phi$ lies in a single apartment of $\BB(G)$ if and only if for any $G$-representation $V$, the associated vector bundle $\E_V$ has a $T_N$-equivariant splitting into line bundles. 
\end{proposition}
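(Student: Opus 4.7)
The plan is to argue both directions using Klyachko's classification of toric vector bundles: a toric vector bundle $\E$ of rank $r$ splits $T_N$-equivariantly as a direct sum of line bundles if and only if the associated Klyachko filtrations on the general fiber $V$ can be simultaneously diagonalized, i.e., there exists a basis of $V$ such that every filtration piece is spanned by a subset of this basis. The filtrations $F^i$ attached to $\E_V$ are exactly those determined by the $1$-parameter subgroups $\rho_i = \Phi(p_i)$ acting on the representation $V$, as written out in the proof of Theorem \ref{thm-apartment}.

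For the forward direction, suppose $\Phi(|\Sigma|)$ lies in an apartment $A = \mathcal{X}^\vee(T)$ corresponding to a maximal torus $T \subset G$. Decompose $V = \bigoplus_\alpha V_\alpha$ into $T$-weight spaces. Because each $\rho_i$ factors through $T$, we have
\[F^i_r = \bigoplus_{\langle \rho_i, \alpha\rangle \geq r} V_\alpha.\]
Choose any basis of $V$ subordinate to the weight-space decomposition. Each $F^i_r$ is then spanned by a subset of this basis, so by Klyachko's splitting criterion, $\E_V$ splits as a direct sum of $T_N$-equivariant line bundles.

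For the converse, pick a faithful $G$-representation $V$ and assume $\E_V$ admits a $T_N$-equivariant splitting into line bundles. Klyachko's criterion supplies a basis of $V$ in which every filtration $F^i$ is spanned by coordinate subspaces. Hence all of the $\rho_i$ are simultaneously diagonalizable on $V$, and in particular they pairwise commute in $\GL(V)$. Since $V$ is faithful, they pairwise commute in $G$, so the semisimple $1$-parameter subgroups $\rho_1,\ldots, \rho_n$ generate an abelian subgroup of $G$ consisting of semisimple elements and therefore lie in a common maximal torus $T \subset G$. Thus $\rho_i \in \mathcal{X}^\vee(T)$ for every $i$. Since $\Phi$ is piecewise-linear with each $\Phi\!\!\mid_\sigma$ linear, and the image $\Phi(\sigma)$ is the positive span of the $\Phi(p_i)$ for $p_i \in \sigma(1)$, linearity and the fact that $\mathcal{X}^\vee(T)_\Q$ is a vector space closed under positive combinations force $\Phi(|\Sigma|) \subseteq A$.

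The main obstacle I anticipate is the step upgrading simultaneous diagonalizability on $V$ to the $\rho_i$ sharing a common maximal torus in $G$: faithfulness of $V$ transports commutation to $G$, and then one invokes the standard fact that a family of pairwise commuting $1$-parameter subgroups of a reductive group lies in a common maximal torus. Everything else reduces to an application of Klyachko's classification together with the explicit description of the filtrations in Theorem \ref{thm-apartment}.
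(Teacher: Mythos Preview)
Your forward direction is correct and matches the paper's argument, just spelled out via Klyachko filtrations rather than via the induced map of buildings.

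The converse has a genuine gap at the step ``hence all of the $\rho_i$ are simultaneously diagonalizable on $V$.'' A basis in which every Klyachko filtration $F^i$ is coordinate-spanned does \emph{not} force the chosen one-parameter subgroups $\rho_i$ to be diagonal in that basis. The filtration attached to a cocharacter $\rho$ only records the point of $\BB(\GL(V))$ that $\rho$ represents, and non-commuting cocharacters can represent points of the same apartment. For instance, in $G=\SL_2$ acting on $V=\K^2$, the cocharacters
\[
\rho_1(t)=\begin{pmatrix} t & t-t^{-1}\\ 0 & t^{-1}\end{pmatrix},\qquad
\rho_2(t)=\begin{pmatrix} t^{-1} & 0\\ t-t^{-1} & t\end{pmatrix}
\]
have filtrations $F^1_1=\K e_1$ and $F^2_1=\K e_2$, both coordinate in the standard basis, yet $\rho_1$ and $\rho_2$ do not commute. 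So Klyachko's splitting criterion only tells you that the \emph{building points} $\psi_V(\Phi(p_i))$ lie in a common apartment of $\BB(\GL(V))$; it gives no control over whether any particular cocharacter representatives in $G$ commute, and your route to a common maximal torus of $G$ collapses exactly at the point you flagged as the main obstacle.

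The paper closes this by staying at the level of buildings throughout: a faithful representation $V$ induces a map $\psi_V:\BB(G)\to\BB(\GL(V))$, the splitting of $\E_V$ places $\psi_V\circ\Phi(|\Sigma|)$ inside an apartment $A'\subset\BB(\GL(V))$, and one then uses that $A'\cap\psi_V(\BB(G))$ is an apartment of $\BB(G)$. No choice of cocharacter representatives---and hence no commutation argument---is required.
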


\begin{proof}
The morphism $G \to \GL(V)$ corresponding to the representation structure on $V$ induces a map on buildings $\psi_V: \BB(G) \to \BB(\GL(V))$, see \cite[Section 2]{Kaveh-Manon-building}. The piecewise-linear map associated to the $\GL(V)$-principal bundle $\GL(\E_V)$ is then $\psi_V\circ \Phi: |\Sigma| \to \BB(\GL(V))$ by \cite[Theorem 2.2]{Kaveh-Manon-building}.  Let the image of $\Phi$ lie in an apartment $A$.  The image $\psi_V(A) \subset \BB(\GL(V))$ must be in an apartment of $\BB(\GL(V))$. This implies that $\E_V$ splits into a sum of $T_N$-equivariant line bundles. Conversely, suppose any associated toric vector bundle of $\PP$ splits. Let $V$ be any faithful representation of $G$, then $\psi_V\circ \Phi(|\Sigma|)$ lies in an apartment $A' \subset \BB(\GL(V))$.  But $A' \cap \psi_V(\BB(G))$ is an apartment $A \subset \BB(G)$, and $\Phi(|\Sigma|) \subseteq A$. 
\end{proof}

\begin{corollary}
If $\PP$ is a toric $G$-principal bundle over $\P^1$, then $\FL(\PP)$ is a Mori dream space. 
\end{corollary}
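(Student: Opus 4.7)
The plan is to reduce the statement to Theorem \ref{thm-apartment} by showing that the piecewise-linear map $\Phi:|\Sigma|\to\BB(G)$ classifying $\PP$ has image in a single apartment. This reduction works because the fan of $\P^1$ is particularly simple: it consists of the origin together with two opposite rays, say with primitive generators $p_1$ and $p_2=-p_1$. So $|\Sigma|$ is the union of these two closed rays meeting at $0$.

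Next, I would use the fact that $\Phi$ is piecewise-linear and maps $0\mapsto 0$, with $\Phi$ linear on each of the two maximal cones of $\Sigma$. Hence $\Phi(|\Sigma|)$ is completely determined by $\rho_1=\Phi(p_1)$ and $\rho_2=\Phi(p_2)$, and equals $\R_{\geq 0}\rho_1\cup \R_{\geq 0}\rho_2$. So I only need to place two rays from the origin of $\BB(G)$ inside a common apartment.

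For this I invoke the standard building axiom (see \cite{Abramenko}): any two simplices of a spherical building are contained in a common apartment. Translating this to the coned building $\BB(G)$, the two simplicial cones containing $\rho_1$ and $\rho_2$ both lie in some apartment $A\subseteq \BB(G)$. Because an apartment of $\BB(G)$ is the cone over a Coxeter complex, it is a linear subspace containing the origin and is closed under nonnegative scaling. Therefore $\R_{\geq 0}\rho_1\cup \R_{\geq 0}\rho_2\subseteq A$, so $\Phi(|\Sigma|)\subseteq A$. Applying Theorem \ref{thm-apartment} then concludes that $\FL(\PP)$ is a Mori dream space.

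There is no real obstacle: the only subtlety is knowing that the building axioms guarantee two arbitrary simplices (and hence two rays from the origin in $\BB(G)$) lie in a common apartment, which is classical. The dimension-$1$ hypothesis on the base is essential precisely because for fans with three or more rays this common-apartment property fails in general, and there is no combinatorial reason for $\Phi(|\Sigma|)$ to fit in one apartment.
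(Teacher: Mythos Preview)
Your proposal is correct and follows essentially the same argument as the paper: the fan of $\P^1$ has two rays, any two points of $\BB(G)$ lie in a common apartment by the building axioms, and then Theorem \ref{thm-apartment} applies. Your write-up simply unpacks these steps in more detail than the paper's one-line proof.
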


\begin{proof}
The fan of $\P^1$ has two rays, and any two points of $\BB(G)$ must lie in a common apartment. 
\end{proof}

\section{The flag bundle of a toric vector bundle}\label{sec-main-vector}

In this section we prove Theorems \ref{thm-main} and \ref{thm-main-stability}.  

\subsection{Representations of $\GL(E)$}

We review some background on the representation theory of the general linear group. The dominant weights of $\GL(E)$ are indexed by integral tuples $\lambda = (\lambda_1, \ldots, \lambda_r)$, where $r = \dim(E)$ and $\lambda_1 \geq \ldots \geq \lambda_r$.  Of special importance are the weights with $\lambda_r \geq 0$, which correspond to the Young tableaux. By abuse of notation, we let $\lambda$ denote the tableau whose $i$-th row has length $\lambda_i$. If $\lambda_r \geq 0$, the corresponding irreducible $\GL(E)$ representation is obtained by evaluating the Schur functor $\S_\lambda: \Vect_\K \to \Vect_\K$ at $E$. For example, the exterior power $\bigwedge^\ell E$ corresponds to the weight $\omega_\ell = (1, \ldots, 1, 0, \ldots, 0)$ with $\ell$ $1$'s and $r - \ell$ $0$'s.  Any irreducible $V_\lambda$ can be realized as a tensor product of a Schur functor and a (possibly negative) power of the determinant: $V_\lambda \cong (\bigwedge^rE)^{\lambda_r} \otimes S_{\bar{\lambda}}(E)$.  Here $\bar{\lambda} = \lambda -\lambda_r\omega_r$ corresponds to a tableau with $r-1$ rows.  Using this identification, the dual $V_{\lambda^*}$ is $(\bigwedge^r E)^{-\lambda_r} \otimes S_{\bar{\lambda}^*}(E)$, where $\bar{\lambda}^* = \sum_{i =1}^{r-1} n_i \omega_{r-1-i}$ if $\bar{\lambda} = \sum_{i =1}^{r-1} n_i \omega_i$. 

Let $|\lambda| = \sum_{i = 1}^r \lambda_i$. When $\lambda_r \geq 0$ this is the number of boxes in the tableau corresponding to $\lambda$.  We let $\row(\lambda)$ be the number of rows in $\lambda$. The Cauchy identity gives the decomposition of $\Sym^\ell(E\otimes V)$ as a $\GL(E)\times \GL(V)$ representation:\\

\[\Sym^\ell(E\otimes V) = \bigoplus_{|\lambda| = \ell} \S_\lambda(E)\otimes \S_\lambda(V).\]\\

\noindent
If $\row(\lambda) > \MIN\{\dim(E), \dim(V)\}$, then $\S_\lambda(E) \otimes \S_\lambda(V) = 0$.  In this way, the Cauchy identity encodes the $\GL(E)\times \GL(V)$ isotypical decomposition of the polynomial ring generated by $E\otimes V$:\\

\[\Sym(E\otimes V) = \bigoplus_{\row(\lambda) \leq \MIN\{\dim(E), \dim(V)\}} \S_\lambda(E)\otimes \S_\lambda(V).\]\\

A choice of basis $\B =\{e_1, \ldots, e_r\} \in E$ determines the maximal torus $T \subset \GL(E)$ of those $g \in \GL(E)$ which are diagonal when expressed in $\B$. Likewise, this choice determines the Borel subgroup $B$ of upper triangular matrices, and its unipotent radical $U \subset B$.  Any $\S_\lambda(E)$ has a unique $1$-dimensional subspace fixed by the action of $U$. This subspace is isomorphic to the $1-$dimensional representation of $B$ with weight $\lambda$.  

\subsection{Flag varieties of $\GL(E)$}

The conventions involved when working with the projectivization $\P\E$ imply that the fiber $(\P\E)_p$ for $p \in X(\Sigma)$ is actually the projective space $\P\E_p^\vee$. We prefer to work with the same convention, which also has the benefit of allowing us to work with the Schur functions $\S_\lambda(\E)$ of $\E$, rather than its dual. For this reason we use some non-standard conventions for $\GL(E)$. 

The ring $\Sym(E \otimes E) = \bigoplus_{\row(\lambda) \leq \dim(E)} \S_\lambda(E)\otimes \S_\lambda(E)$ is the coordinate ring of the $\GL(E)\times \GL(E)$ variety $E^\vee \otimes E^\vee$. We view the latter as $E^\vee \otimes E \cong \End(E)$, where the right hand side action is composed with the inverse transpose map. Under the isomorphism with the $r\times r$ matrices, the summand $\bigwedge^rE \otimes \bigwedge^r E \subset \Sym^r(E\otimes E)$ can be thought of as the 1-dimensional span of the determinant form.  Inverting this summand produces a commutative algebra which is isomorphic to the coordinate ring of $\GL(E) \subset \End(E)$. 

Now fix a set of dimensions $I = \{d_1, \ldots, d_\ell\}$, where $0 < d_1 < \cdots < d_\ell < r$. We let $\FL_I(E^\vee)$ denote the moduli space of flags $V_1 \subset \cdots \subset V_\ell \subset E^\vee$ with $\dim(V_i) = d_i$ realized as the quotient $\GL(E)/P_I$, where $P_I \subset \GL(E)$ is the corresponding parabolic subgroup.  With these conventions we have the following description of the Cox ring of $\FL_I(E^\vee)$:\\

\[\RR(\FL_I(E^\vee)) = \bigoplus_{n_1, \ldots, n_\ell \geq 0} \S_{\sum n_i \omega_{d_i}}(E) \]\\

\noindent
The following lemma is immediate from this description and standard facts about flag varieties. 

\begin{lemma}\label{lem-Coxflag}
Let $\dim(V) = \ell$, and let $U_V \subset \GL(V)$ denote a maximal unipotent subgroup, then\\

\[\Sym(E\otimes V)^{U_V} = \bigoplus_{\row(\lambda) \leq \MIN\{r, \ell\}} \S_\lambda(E).\]\\

\noindent
In particular, if $\ell < r$ then $\Sym(E\otimes V)^{U_V} = \RR(\FL_{[\ell]}(E^\vee))$. Moreover, if $\ell \geq r$ then $\Sym(E\otimes V)^{U_V} = \RR(\FL(E^\vee))[t]$, where $\K t \cong \bigwedge^rE \otimes \bigwedge^r E$ has $\Sym$ degree $r$.
\end{lemma}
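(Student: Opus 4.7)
The plan is to run the Cauchy identity, take $U_V$-invariants using the standard fact that Schur modules have a unique highest weight line, and then match the result against the Cox ring description given just before the lemma.

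First, I would apply the Cauchy decomposition recalled in the preceding paragraph, namely
\[\Sym(E\otimes V) \;=\; \bigoplus_{\row(\lambda)\leq \MIN\{r,\ell\}} \S_\lambda(E)\otimes \S_\lambda(V),\]
which is already a decomposition as a $\GL(E)\times\GL(V)$-representation. Taking $U_V$-invariants commutes with the direct sum and acts only on the second tensor factor, so the computation reduces to identifying $\S_\lambda(V)^{U_V}$ for each partition $\lambda$ with $\row(\lambda)\leq \ell$. By the classical theory recalled in the subsection on flag varieties (the existence of a unique highest weight line in an irreducible representation), $\S_\lambda(V)^{U_V}$ is one-dimensional for every such $\lambda$. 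This immediately yields the displayed formula
\[\Sym(E\otimes V)^{U_V} \;=\; \bigoplus_{\row(\lambda)\leq \MIN\{r,\ell\}} \S_\lambda(E).\]

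Next, I would translate this into the two Cox-ring statements. For the case $\ell < r$, a partition $\lambda$ with $\row(\lambda)\leq \ell$ is uniquely of the form $\lambda=\sum_{i=1}^{\ell} n_i\omega_i$ with $n_i = \lambda_i-\lambda_{i+1}\geq 0$ (setting $\lambda_{\ell+1}=0$), so the indexing set of the sum is exactly $\Z_{\geq 0}^{\ell}$. Comparing with the formula
\[\RR(\FL_{[\ell]}(E^\vee)) = \bigoplus_{n_1,\ldots,n_\ell\geq 0} \S_{\sum n_i\omega_i}(E)\]
displayed just above the lemma identifies the two rings.

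For $\ell\geq r$ we have $\MIN\{r,\ell\}=r$, and since $\S_\lambda(E)$ vanishes whenever $\row(\lambda)>r$ the constraint $\row(\lambda)\leq r$ is automatic. Every partition $\lambda$ with at most $r$ rows splits uniquely as $\lambda=\bar\lambda+\lambda_r\omega_r$ with $\row(\bar\lambda)\leq r-1$, and by the multiplicative property of Schur functors under twisting by the determinant (recalled in the review of $\GL(E)$ representations), $\S_\lambda(E)\cong \S_{\bar\lambda}(E)\otimes(\bigwedge^rE)^{\lambda_r}$. Summing over $\lambda_r\geq 0$ produces a polynomial variable in the one-dimensional line spanned by the top exterior power (viewed inside the $\lambda=\omega_r$ Cauchy summand), and summing over $\bar\lambda$ recovers $\RR(\FL(E^\vee))$ by the same identification as in the first case, applied to $I=\{1,\ldots,r-1\}$. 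This gives $\Sym(E\otimes V)^{U_V}\cong \RR(\FL(E^\vee))[t]$ with $t$ of $\Sym$-degree $r$, as asserted.

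The only real subtlety is the bookkeeping in the $\ell\geq r$ case, where one must verify that factoring out the determinant does give a genuinely free polynomial variable rather than imposing some relation; this is clear because distinct values of $\lambda_r$ land in distinct Cauchy summands and hence in linearly independent components of $\Sym(E\otimes V)$, so there is no nontrivial relation among the powers of $t$. Everything else is a direct application of Cauchy and the highest weight principle.
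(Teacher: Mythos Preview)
Your proposal is correct and is exactly the argument the paper has in mind: the paper does not give a proof at all, instead declaring the lemma ``immediate from this description and standard facts about flag varieties,'' and your write-up is precisely the unpacking of that sentence---Cauchy, one-dimensionality of $\S_\lambda(V)^{U_V}$, then matching the index set to the Cox ring formula for $\FL_I(E^\vee)$ displayed just before the lemma. There is nothing to add.
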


\subsection{Proof of Theorem \ref{thm-main}}

Let $\E$ be a toric vector bundle of rank $r$ over $X(\Sigma)$. We let $\pi:\FL(\E) \to X(\Sigma)$ denote the corresponding full flag bundle of $\E$.  Over a point $p \in X(\Sigma)$ with fiber $E$, the fiber $\FL(\E)_p$ is the space $\FL(\E_p^\vee)$.

To construct $\FL(\E)$ we consider the toric vector bundle $\E \otimes E$, where $E$ is a model vector space of dimension $r$, and apply the $\Sym$ functor to obtain a sheaf of polynomial rings $\Sym(\E \otimes E)$. Over a trivializing affine patch $X(\sigma) \subset X(\Sigma)$ we have $\Sym(\E \otimes E)\mid_{X(\sigma)} \cong \O(X(\sigma)) \otimes \Sym(E \otimes E)$. The sheaf $\Sym(\E \otimes E)$ is computed in terms of Schur functors using the Cauchy identity:\\

\begin{equation}
\Sym(\E \otimes E) = \bigoplus_{\row(\lambda) \leq r} \S_\lambda(\E) \otimes \S_\lambda(E).
\end{equation}\\

\noindent
Here $\S_\lambda(\E)$ is the Schur functor associated to the tableau $\lambda$ applied to $\E$, see \cite{Gonzalez-rank2}, \cite{Kaveh-Manon-tvb}.  

The $\O(X(\Sigma))$ algebra $\Sym(\E \otimes E)$ is naturally a subsheaf of the coordinate sheaf of the $\GL(E)$-principal \emph{frame} bundle $\GL(\E)$. The relative coordinate sheaf $\O(\GL(\E))$ is obtained from $\Sym(\E\otimes E)$ by locally inverting the determinant.  In particular, let $\L$ denote the invertible sheaf of sections of $\bigwedge^r\E \otimes \bigwedge^r E \subset \Sym^r(\E \otimes E)$. We form the $\O(X(\Sigma))$-algebra $\bigoplus_{\ell \geq 0} \L^{-\ell} \otimes \Sym(\E \otimes E)$. This algebra carries two distinguished types of global sections: the identity $1: \O(X(\Sigma)) \to \L^0 \otimes \Sym(\E \otimes E)$, and a choice of isomorphism $c: \O(X(\Sigma)) \to \L^{-1} \otimes \L \subset \L^{-1} \otimes \Sym(\E \otimes E)$. We let $\I = \langle (c - 1)\O(X(\Sigma)) \rangle \subset \bigoplus_{\ell \geq 0} \L^{-\ell} \otimes \Sym(\E \otimes E)$ be the sheaf of $\bigoplus_{\ell \geq 0} \L^{-\ell} \otimes \Sym(\E \otimes E)$ ideals generated by the difference of these sections.  It is straightforward to verify that over an affine patch $X(\sigma) \subset X(\Sigma)$, the resulting algebra is isomorphic to $\O(X(\sigma))\otimes \K(\GL(E)) \cong \O(X(\sigma))\otimes (\bigwedge^r E \otimes \bigwedge^r E)^{-1}\Sym(E \otimes E)$. 

The flag bundle $\FL_I(\E)$ is $\GL(\E)/P_I$, in particular $\FL(\E) = \GL(\E)/B$ for $B \subset \GL(E)$ a Borel subgroup.  A construction similar to Section \ref{subsec-flagprincipal} gives a distinguished collection of line bundles $\J_\lambda$ on $\FL(\E)$, where $\lambda$ is a weight of $\GL(E)$. For $\lambda = \omega_r$ we get the line bundle $\bigwedge^r \E$. Similarly, we get a computation of global sections:\\

\begin{equation}\label{eq-pushforward}
H^0(\FL(\E), \J_\lambda\otimes \pi^*D_\psi) = H^0(X(\Sigma), \S_{\bar{\lambda}}(\E) \otimes (\bigwedge^r \E)^{\lambda_r}\otimes D_\psi).    
\end{equation}\\

\begin{theorem}\label{thm-coxflagvector}
Let $U$ be the unipotent radical of $B$. For any toric vector bundle $\E$ we have:\\

\begin{equation}
\RR(\P(\E \otimes E))^U \cong \RR(\FL(\E))[t],
\end{equation}\\

\noindent
where $t$ is a parameter of $\Sym$ degree $r$. 
\end{theorem}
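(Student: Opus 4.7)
The plan is to globalize the fiber-level computation of Lemma \ref{lem-Coxflag}. Let $U \subset \GL(E)$ be the unipotent radical of $B$ acting on the model factor $E$ in $\E \otimes E$. Applying the Cauchy identity sheafwise to the flat sheaf $\Sym(\E \otimes E)$ on $X(\Sigma)$ gives
\[
\Sym(\E \otimes E) \;=\; \bigoplus_{\row(\lambda) \leq r} \S_\lambda(\E) \otimes \S_\lambda(E),
\]
and since each $\S_\lambda(E)^U$ is the one-dimensional highest-weight line, taking $U$-invariants yields
\[
\Sym(\E \otimes E)^U \;=\; \bigoplus_{\lambda_1 \geq \cdots \geq \lambda_r \geq 0} \S_\lambda(\E).
\]

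Next I would split each weight as $\lambda = \bar\lambda + k\omega_r$ with $\bar\lambda_r = 0$ and $k = \lambda_r \geq 0$, and use $\S_\lambda(\E) \cong \S_{\bar\lambda}(\E) \otimes (\bigwedge^r \E)^k$. Twisting by $D_\psi$ and taking global sections, equation (\ref{eq-pushforward}) identifies each summand with $H^0(\FL(\E), \J_{\bar\lambda + k\omega_r} \otimes \pi^* D_\psi)$. Since $\J_{k\omega_r}$ lies in the $\Pic(X(\Sigma))$ factor of the splitting $\Pic(\FL(\E)) \cong \Pic(\FL(G)) \times \Pic(X(\Sigma))$ (because $\omega_r$ extends to a character of $\GL(E)$), one can absorb $(\bigwedge^r \E)^k$ into $D_\psi$ by the substitution $\psi' = \psi + k[\bigwedge^r \E]$ for each fixed $k$, yielding
\[
\RR(\P(\E \otimes E))^U \;=\; \bigoplus_{\bar\lambda_r = 0,\, k \geq 0,\, \psi'} H^0\bigl(\FL(\E),\, \J_{\bar\lambda} \otimes \pi^* D_{\psi'}\bigr) \cdot t^k.
\]
The right-hand side is exactly $\RR(\FL(\E))[t]$, with $t$ of $\Sym$-degree $r$ corresponding to the distinguished generator of $\bigwedge^r \E \otimes \bigwedge^r E \subset \Sym^r(\E \otimes E)$.

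To upgrade this to an algebra isomorphism, I would verify multiplicativity. The key observation is that multiplication in $\Sym(\E \otimes E)^U$ restricted to $\S_\lambda(\E) \otimes \S_\mu(\E)$ lands in $\S_{\lambda + \mu}(\E)$: a product of $U$-invariant weight vectors is $U$-invariant of weight $\lambda + \mu$, and the $(\lambda + \mu)$-weight $U$-invariant line in $\Sym^{|\lambda| + |\mu|}(\E \otimes E)^U$ is exactly the Cartan summand. Under the decomposition $\lambda + \mu = (\bar\lambda + \bar\mu) + (k + k')\omega_r$, this Cartan multiplication matches the product rule of $\RR(\FL(\E))[t]$, which is Cartan multiplication on the $\bar\lambda$-factors together with ordinary multiplication in $t$.

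The main obstacle will be the Picard-grading bookkeeping around the substitution $\psi \mapsto \psi + k[\bigwedge^r \E]$: one has to verify that it commutes with multiplication of sections so that the assembled map $\RR(\P(\E \otimes E))^U \to \RR(\FL(\E))[t]$ is a well-defined bijective algebra homomorphism. Once this bookkeeping is done, the isomorphism follows immediately from the graded-piece computations above.
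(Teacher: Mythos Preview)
Your proposal is correct and follows essentially the same route as the paper: apply the Cauchy decomposition, take $U$-invariants on the model factor $E$, split off $\lambda_r\omega_r$ from each $\lambda$, and absorb the resulting power of the determinant line bundle $\bigwedge^r\E$ into the $\Pic(X(\Sigma))$-grading. The paper handles your ``main obstacle'' in one line by writing $\D = \bigoplus_{D \in \Pic(X(\Sigma))} D$ and noting $\D \otimes \S_\lambda(\E) \cong \D \otimes \S_{\bar\lambda}(\E)$, which is exactly your substitution $\psi \mapsto \psi + k[\bigwedge^r\E]$; your additional remarks on multiplicativity via Cartan products are correct but are left implicit in the paper.
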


\begin{proof}
For any Young diagram with $\leq r$ rows we can write $\S_{\lambda}(\E) = (\bigwedge^r \E)^{\lambda_r} \otimes \S_{\bar{\lambda}}(\E)$, where $\bar{\lambda} = \lambda - \lambda_r\omega_r$.  Let $\D = \bigoplus_{\Pic(X(\Sigma))} D$, then\\

\[\D \otimes \S_\lambda(\E) \cong \D \otimes (\bigwedge^r\E)^{\lambda_r} \otimes \S_{\bar{\lambda}}(\E) \cong \D \otimes \S_{\bar{\lambda}}(\E).\]\\

\noindent
It follows that \\

\[\RR(\P(\E \otimes E))^U = \bigoplus_{\row(\lambda) \leq r} H^0(X(\Sigma),\D \otimes \S_{\bar{\lambda}}(\E))\otimes \S_{\lambda}(E)^U = \bigoplus_{\bar{\lambda}, \lambda_r} H^0(X(\Sigma),\D \otimes \S_{\bar{\lambda}}(\E)) t^{\lambda_r}\]\\

\noindent
The right hand side is the ring $\RR(\FL(\E))[t]$.
\end{proof}

We will need the following lemma.  

\begin{lemma}\label{lem-coxinclude}
Let $W \subseteq V$, then there is an inclusion $\RR(\P(\E \otimes W)) \subseteq \RR(\P(\E \otimes V))$ of $\GL(W)$ algebras. If $dim(V) \geq dim(W) \geq r$, then we may arrange that every $\GL(V)$ highest weight vector in $\RR(\P(\E \otimes V))$ is in $\RR(\P(\E \otimes W))$.
\end{lemma}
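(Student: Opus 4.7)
The plan is to directly compare the Cauchy decompositions of the two Cox rings. Recall from the conventions of the paper that for the projectivized toric vector bundle $\P(\E \otimes V)$ we have
\[
\RR(\P(\E\otimes V)) \;=\; \bigoplus_{\L \in \Pic(X(\Sigma)),\, n\geq 0} H^0\bigl(X(\Sigma),\, \Sym^n(\E\otimes V)\otimes \L\bigr).
\]
Applying the Cauchy identity fiberwise to $\Sym^n(\E\otimes V)$ gives a decomposition
\[
\RR(\P(\E\otimes V)) \;\cong\; \bigoplus_{\L,\, \row(\lambda)\leq \min\{r,\dim V\}} H^0\bigl(X(\Sigma),\, \S_\lambda(\E)\otimes \L\bigr)\otimes \S_\lambda(V),
\]
and likewise for $W$. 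The strategy is to produce the inclusion summand-by-summand from functoriality of the Schur functors, and then handle the highest weight claim by choosing a basis of $V$ extending one of $W$.

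First I would fix a decomposition $V = W \oplus W'$, which realizes $\GL(W)$ as the subgroup of $\GL(V)$ acting as the identity on $W'$. The inclusion $W\hookrightarrow V$ induces, via Schur functoriality, a $\GL(W)$-equivariant inclusion $\S_\lambda(W) \hookrightarrow \S_\lambda(V)$ for every Young diagram $\lambda$. Since the indexing set $\{\lambda : \row(\lambda)\leq \min(r,\dim W)\}$ appearing in $\RR(\P(\E\otimes W))$ is contained in the indexing set for $\RR(\P(\E\otimes V))$, tensoring with $H^0(X(\Sigma),\S_\lambda(\E)\otimes\L)$ and summing yields an inclusion of algebras
\[
\RR(\P(\E\otimes W)) \hookrightarrow \RR(\P(\E\otimes V)).
\]
This inclusion is multiplicative because the Cauchy decomposition is compatible with the algebra structure of the symmetric algebra, and it is $\GL(W)$-equivariant because on each summand the $\GL(W)$-action on the $H^0$-factor is trivial while on $\S_\lambda(V)$ it is restricted from $\GL(V)$.

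For the second assertion, note that when $\dim W \geq r$ we have $\min(r,\dim W) = \min(r,\dim V) = r$, so exactly the same Young diagrams $\lambda$ (namely those with $\row(\lambda)\leq r$) contribute to both Cox rings. Fix a basis $e_1,\ldots,e_{\dim V}$ of $V$ so that $e_1,\ldots,e_{\dim W}$ is a basis of $W$, and use the induced Borel subgroup and maximal torus of $\GL(V)$. Each irreducible $\S_\lambda(V)$ has a one-dimensional space of $\GL(V)$ highest weight vectors, and a standard model of the highest weight vector (e.g.\ the Young symmetrizer image of $e_1^{\otimes\lambda_1}\otimes e_2^{\otimes\lambda_2}\otimes\cdots$) involves only $e_1,\ldots,e_{\row(\lambda)}$. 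Since $\row(\lambda)\leq r \leq \dim W$, this vector already lies in $\S_\lambda(W)$. Consequently every $\GL(V)$-highest-weight vector of $\RR(\P(\E\otimes V))$ can be written as $f\otimes v$ with $f\in H^0(X(\Sigma),\S_\lambda(\E)\otimes\L)$ and $v\in \S_\lambda(W)$, and hence lies in the image of $\RR(\P(\E\otimes W))$.

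The only subtlety I foresee is making sure that the inclusion is genuinely an inclusion of algebras (not merely of graded vector spaces), which comes down to observing that the Cauchy decomposition of the symmetric algebra $\Sym(E\otimes V)$ is an isomorphism of $\GL(E)\times\GL(V)$-algebras, so that the multiplication law on the $\S_\lambda(\E)\otimes \S_\lambda(V)$ pieces is determined intrinsically and is preserved by restriction to $W$. Once this is in place, both parts follow by tracking indices.
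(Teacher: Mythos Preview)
Your proposal is correct and follows essentially the same route as the paper: both arguments use the Cauchy decomposition to reduce the inclusion to the Schur-functor inclusions $\S_\lambda(W)\hookrightarrow\S_\lambda(V)$, and both handle the highest weight claim by choosing a basis of $V$ extending one of $W$ and noting that $\row(\lambda)\leq r\leq \dim W$. The only cosmetic difference is that the paper first writes down the sheaf map $\Sym(\E\otimes W)\to\Sym(\E\otimes V)$ induced by $W\hookrightarrow V$ (which is automatically an algebra homomorphism) and then checks injectivity via Cauchy, whereas you build the map summand-by-summand and then verify multiplicativity; starting from the induced map on symmetric algebras would spare you the ``only subtlety'' you flag at the end.
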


\begin{proof}
The inclusion $W \to V$ induces a map of sheaves $\Sym(\E \otimes W) \to \Sym(\E \otimes V)$; this can be checked to be a monomorphism by passing to affine neighborhoods.  For fixed $d$ we have:\\

\begin{equation}\label{eq-tensinclude}
\Sym^d(\E \otimes W) = \bigoplus_{|\lambda| = d} \S_\lambda(\E)\otimes \S_{\lambda}(W) \to \bigoplus_{|\lambda| = d} \S_{\lambda}(\E) \otimes \S_{\lambda}(V) = \Sym^d(\E \otimes V)
\end{equation}\\

\noindent
We still have a monomorphism after tensoring (\ref{eq-tensinclude}) with any line bundle $\L$, and taking global sections commutes with direct sums, so we obtain a monomorphism $\RR(\P(\E \otimes W)) \to \RR(\P(\E \otimes V))$. 

Choosing compatible bases, we view $W \to V$ as the inclusion of the first $\dim(W)$ basis members, inducing the upper-left inclusion $\GL(W) \to \GL(V)$. Any compatible ordering on this basis gives a  choice of Borel subgroups in $\GL(W)$ and $\GL(V)$.  The number of rows of the $\lambda$ is bounded above by $r$, so all highest weight vectors corresponding to these Borel subgroups in the $\S_\lambda(V)$ only involve the first $r$ members of the basis. This implies that any such highest weight vector of $\RR(\P(\E \otimes V))$ lies in $\RR(\P(\E \otimes W))$.   
\end{proof}

Now we prove Theorem \ref{thm-main}. 

\begin{proposition}[Theorem \ref{thm-main}]
Let $\dim(V) = \ell$. The projectivized toric vector bundle $\P(\E \otimes V)$ is a Mori dream space if and only if the flag bundle $\FL_I(\E)$ is a Mori dream space for all $|I| \leq \ell$.  
\end{proposition}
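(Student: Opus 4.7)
\emph{Proof proposal.}\quad The plan is to pass between $\P(\E\otimes V)$ and the flag bundles $\FL_I(\E)$ via $\GL(V)$-invariant theory, emulating the argument of Theorem \ref{thm-coxringunipotent}. The key observation is that taking $U_V$-invariants in $\RR(\P(\E\otimes V))$, where $U_V \subset \GL(V)$ is a maximal unipotent subgroup, recovers (up to a free polynomial variable) the Cox ring of a sufficiently large partial flag bundle of $\E$.

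First, I would apply the Cauchy identity fiberwise on $X(\Sigma)$, as in Theorem \ref{thm-coxflagvector}, to write
\[\Sym^d(\E\otimes V) = \bigoplus_{|\lambda|=d,\,\row(\lambda)\leq \min(r,\ell)} \S_\lambda(\E) \otimes \S_\lambda(V),\]
twist by all line bundles $\L \in \Pic(X(\Sigma))$, and take global sections; this exhibits $\RR(\P(\E\otimes V))$ as a $\GL(V)$-module. Taking $U_V$-invariants kills every $\S_\lambda(V)$ except for its one-dimensional highest weight line (Lemma \ref{lem-Coxflag}), leaving
\[\RR(\P(\E\otimes V))^{U_V} = \bigoplus_{\L \in \Pic(X(\Sigma)),\,\row(\lambda) \leq \min(r,\ell)} H^0\bigl(X(\Sigma),\,\S_\lambda(\E)\otimes \L\bigr).\]
For $\ell < r$ the right-hand side is $\RR(\FL_{[\ell]}(\E))$ by the pushforward formula (\ref{eq-pushforward}) applied to the parabolic $P_{[\ell]}$; for $\ell \geq r$ the determinantal splitting $\S_\lambda(\E) \cong (\bigwedge^r\E)^{\lambda_r}\otimes \S_{\bar\lambda}(\E)$ from the proof of Theorem \ref{thm-coxflagvector} identifies it with $\RR(\FL(\E))[t]$, where $t$ is a free polynomial variable of $\Sym$-degree $r$.

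Second, I would invoke Grosshans' theorem (\cite[Theorem 16.2]{Grosshans}, as used in Theorem \ref{thm-coxringunipotent}): the reductive group $\GL(V)$ acts rationally on the graded algebra $\RR(\P(\E\otimes V))$, so finite generation of $\RR(\P(\E\otimes V))$ is equivalent to finite generation of its $U_V$-invariant subring. Combined with the previous step, $\P(\E\otimes V)$ is a Mori dream space if and only if $\FL_{[\ell]}(\E)$ is (when $\ell<r$), respectively $\FL(\E)$ is (when $\ell \geq r$, since adjoining a free variable preserves finite generation). For an arbitrary $I$ with $\max(I) \leq \ell$, $\RR(\FL_I(\E))$ sits inside $\RR(\FL_{[\ell]}(\E))$ (respectively $\RR(\FL(\E))$) as the invariant subring under a subtorus of the maximal torus of $\GL(E)$, so finite generation transfers. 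The converse direction is immediate because $[\ell]$ (or $\{1,\ldots,r-1\}$) is itself among the allowed index sets $I$.

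The main obstacle is the $\ell \geq r$ case: the diagrams $\lambda$ with $r$ rows contribute arbitrary powers of the determinantal bundle $\bigwedge^r\E \otimes \bigwedge^r V$, and one must carefully factor out the auxiliary variable $t$ as in Theorem \ref{thm-coxflagvector} to verify that the resulting algebra is a genuine polynomial extension of $\RR(\FL(\E))$ rather than a non-trivial twist. A secondary point to check is that each passage from $\FL_{[\ell]}(\E)$ to $\FL_I(\E)$ really is realized by invariance under a reductive subtorus, so that Hilbert--Nagata finite generation applies.
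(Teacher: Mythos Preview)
Your proposal is correct and follows essentially the same route as the paper: identify $\RR(\P(\E\otimes V))^{U_V}$ with $\RR(\FL_{[\ell]}(\E))$ (or $\RR(\FL(\E))[t]$ when $\ell\geq r$) via the Cauchy decomposition, transfer finite generation across the $U_V$-invariants using Grosshans, and then pass to arbitrary $\FL_I(\E)$ as a torus-invariant subring. The paper's own proof is terser and leaves the invocation of Grosshans implicit, but the architecture is identical; your explicit handling of the $\ell\geq r$ case via Theorem~\ref{thm-coxflagvector} and your remark on the subtorus passage are exactly the ingredients the paper relies on.
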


\begin{proof}
We consider $\RR(\FL_{[\ell]}(\E)) = \bigoplus_{\row(\lambda) \leq \ell} H^0(X(\Sigma),\S_\lambda(\E) \otimes D)$ for $\ell < r$. For $\dim(V) = \ell$, this is $\RR(\P(\E \otimes V))^{U_V}$, and coincides with a graded subring of $\RR(\P(\E \otimes E))^U$ under the inclusion in Lemma \ref{lem-coxinclude}. Moreover, $\RR(\FL_I(\E))$ is a graded subring of $\RR(\FL_{[\ell]}(\E))$. These identities imply that $\RR(\FL_{[\ell]}(\E))$ is finitely generated if and only if $\RR(\FL_I(\E))$ is finitely generated for all $I \subseteq [\ell]$ if and only if $\RR(\P(\E \otimes V))$ is finitely generated.  
\end{proof}

Observe that in the case $\ell = r-1$, $\FL(\E)$ is finitely generated if and only if $\RR(\P(\E \otimes E))$ is finitely generated if and only if $\RR(\P(\E \otimes V))$ is finitely generated for all $dim(V) > r$ by Theorem \ref{thm-coxflagvector} and Lemma \ref{lem-coxinclude}.  We can also obtain general information about the generating sets of these rings. 

\begin{corollary}\label{cor-dominantgen}
Let $\Omega \subset \Lambda_+$ be a set of Young diagrams such that the corresponding summands generate $\RR(\FL_{[\ell]}(\E))$, then:

\begin{enumerate}
    \item the $\Omega$ components generate $\RR(\P(\E \otimes V))$ if $\dim(V) = \ell < r$.
    \item the $\Omega \cup \{\omega_r\}$ components generate $\RR(\P(\E \otimes V))$ if $\dim(V) = \ell \geq r$.
\end{enumerate}

\end{corollary}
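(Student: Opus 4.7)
The plan is to apply the Grosshans principle, cited in the paper as \cite[Theorem 1.6]{Grosshans} and \cite{HMM} and already invoked in the corollary following Theorem \ref{thm-coxringunipotent}: for a rational $G$-algebra $R$ with $G$ reductive, a set of $G$-isotypical components generates $R$ as a $\K$-algebra if and only if the highest-weight parts of these components generate $R^{U}$. In our setting I would take $G = \GL(V)$ acting on $\RR(\P(\E \otimes V))$ through the natural action on the $V$-factor, and let $U_V \subset \GL(V)$ be the unipotent radical of a chosen Borel.

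The key input is a fiberwise version of Lemma \ref{lem-Coxflag}. Taking $U_V$-invariants in the Schur decomposition
\[\RR(\P(\E \otimes V)) = \bigoplus_{\row(\lambda) \leq \min(r,\ell)} H^0(X(\Sigma), \S_\lambda(\E) \otimes \D) \otimes \S_\lambda(V),\]
each factor $\S_\lambda(V)$ contributes its one-dimensional highest-weight line, so the invariant ring is identified with $\RR(\FL_{[\ell]}(\E))$ when $\ell < r$, and with $\RR(\FL(\E))[t]$ when $\ell \geq r$, matching Theorem \ref{thm-coxflagvector}; here $t$ spans the highest-weight line of the $\omega_r$-summand and has $\Sym$-degree $r$. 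I expect this identification to be a routine extension of Lemma \ref{lem-Coxflag} from the fiber to the bundle, because taking $U_V$-invariants commutes term-by-term with the Schur decomposition and with the direct sum over $\Pic(X(\Sigma))$.

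With this setup the proof is essentially immediate. For case (1) with $\ell<r$, the hypothesis that $\Omega$ generates $\RR(\FL_{[\ell]}(\E))$ is exactly the statement that the $\Omega$-components generate $\RR(\P(\E \otimes V))^{U_V}$; the Grosshans principle then promotes this to the conclusion that the full isotypical summands $H^0(X(\Sigma), \S_\lambda(\E) \otimes \D) \otimes \S_\lambda(V)$ for $\lambda \in \Omega$ generate $\RR(\P(\E \otimes V))$. For case (2) with $\ell \geq r$, the variable $t$ lies in the $\omega_r$-isotypical piece, so $\Omega \cup \{\omega_r\}$ generates $\RR(\FL(\E))[t] = \RR(\P(\E \otimes V))^{U_V}$, and the same Grosshans step gives the conclusion. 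The main obstacle, such as it is, is the bookkeeping needed to verify the fiberwise extension of Lemma \ref{lem-Coxflag} in the bundle setting; once that is pinned down, the result follows from the same principle used earlier in Section \ref{sec-main-semisimple}.
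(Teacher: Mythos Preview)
Your proposal is correct and follows essentially the same approach as the paper: invoke the Grosshans transfer principle to pass from generators of $R^{U_V}$ to generators of $R$ for $R = \RR(\P(\E\otimes V))$, after identifying $R^{U_V}$ with $\RR(\FL_{[\ell]}(\E))$ (for $\ell<r$) or $\RR(\FL(\E))[t]$ (for $\ell\geq r$) via the bundle version of Lemma~\ref{lem-Coxflag} and Theorem~\ref{thm-coxflagvector}. The paper's proof is a two-line sketch of exactly this, so your more detailed write-up is simply an expansion of the same argument.
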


\begin{proof}
For a $G$-algebra $R$, the components which generate $R^{U_+}$ also generate $R$.  Now take $R = \RR(\E \otimes V)$ and use Theorem \ref{thm-coxflagvector}. 
\end{proof}

\subsection{Proof of Theorem \ref{thm-main-stability}}

Let $\E$ be a toric vector bundle such that $\P(\E \otimes V)$ is a Mori dream space for all finite dimensional vector spaces $V$.  By Corollary \ref{cor-dominantgen} we know that there is a fixed set of dominant weights $\Omega \subset \Lambda_+$ such that the corresponding Schur components generate the Cox ring $\RR(\P(\E \otimes V))$ independent of $V$. In this section we sharpen the description of these generators.  

The theory of \emph{twisted commutative algebras} \cite{Sam-Snowden}, and more broadly \emph{representation stability} \cite{Sam-Snowden-Grob}, \cite{Erman-Sam-Snowden}, \cite{Sam-syzygy} provides a framework to view families of algebras indexed by an integer parameter $\ell \geq 0$ as a single object.  These techniques can show that bounds on presentation data for objects in the family hold independent of $\ell$.  The various cryptomorphic descriptions of twisted commutative algebras allow objects which superficially look quite different to be treated with similar methods. We use the following definition of twisted commutative algebra (\cite{Sam-Snowden}). Let $\Vect_\K$ be the category of finite dimensional $\K$-vector spaces, and let $\Alg_\K$ be the category of commutative algebras over $\K$.

\begin{definition}
 A twisted commutative algebra is a functor $R: \Vect_\K \to \Alg_\K$.  
\end{definition}

\noindent
The fact that $R$ is a functor implies that any $R(V)$ for $V \in \Vect_\K$ is a $\GL(V)$ representation.  We may therefore consider the isotypical decomposition of $R(V)$ as a function of $V$:\\

\begin{equation}
R(V) = \bigoplus_{\lambda \in \Lambda_+} M_\lambda(V)\otimes V_\lambda
.\end{equation}\\

\noindent
A twisted commutative algebra is said to be \emph{bounded} (\cite{Sam-Snowden}) if there is some $r$ such that the $\lambda$ with $M_\lambda(V) \neq 0$ always have less than $r$ rows. Bounded twisted commutative algebras then have the stability properties as seen in Proposition \ref{prop-main-semisimple}. 

We give a second proof of Theorem \ref{thm-main-stability} along the lines of a modern treatment of Weyl's Theorem (Theorem \ref{thm-weyl}), see e.g. \cite[pg 73, Theorem A]{Kraft-Procesi}. For a reductive group $G$, a $G$-representation $V$, and a subspace $N \subset V$, let $\langle N \rangle _G \subseteq V$ denote the subrepresentation generated by $N$. We say that $\langle N \rangle_G$ is the $G$-span of $N$. 

\begin{proposition}\label{prop-rep}
Let $H \subseteq G$ be an inclusion of reductive groups, and suppose that $R \subseteq S$ is an inclusion of $H$ representations such that the $H$ action on $S$ extends to an action by $G$.  Moreover, suppose that every $G$-highest weight vector in $S$ is in $R$, then if $M \subseteq S$ is a $G$ representation, we have:\\

\[M = \langle M \cap R \rangle_G\]\\

\end{proposition}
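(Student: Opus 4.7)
The plan is a direct application of the representation theory of reductive groups: I will reduce the statement to the standard fact that a representation of a reductive group is generated, as a $G$-module, by its space of highest weight vectors.

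Fix a Borel subgroup $B \subseteq G$ with unipotent radical $U_+$ and maximal torus $T \subseteq B$, so that the highest weight vectors of a $G$-representation $W$ are the nonzero $T$-weight vectors in $W^{U_+}$. The first step is to observe that since $G$ is reductive, $M$ decomposes as a direct sum of $T$-weight spaces within $M^{U_+}$, and each irreducible isotypic component is the $G$-span of its highest weight space. In particular, one has the standard identity
\[ M = \langle M^{U_+} \rangle_G. \]
I would either cite this or give a one-line derivation using complete reducibility (write $M$ as a sum of irreducibles $V_{\lambda_i}$, each of which is generated by its highest weight line).

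The second step is to show $M^{U_+} \subseteq R$. By definition any nonzero vector in $M^{U_+}$ is a $G$-highest weight vector of $S$ (it is $U_+$-fixed in $S$ and $T$-homogeneous, since the $T$-weight decomposition of $M^{U_+}$ is inherited from that of $S^{U_+}$). By the hypothesis that every $G$-highest weight vector of $S$ lies in $R$, we conclude $M^{U_+} \subseteq R$, and since obviously $M^{U_+} \subseteq M$, we get $M^{U_+} \subseteq M \cap R$.

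Combining these two steps yields
\[ M = \langle M^{U_+} \rangle_G \subseteq \langle M \cap R \rangle_G \subseteq M, \]
where the last inclusion holds because $M \cap R \subseteq M$ and $M$ is $G$-stable. This forces equality. There is no real obstacle; the only subtlety is being careful about multiplicities, which is handled uniformly by working with the whole space $M^{U_+}$ rather than choosing individual highest weight vectors for each irreducible summand.
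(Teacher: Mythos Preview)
Your proof is correct and follows essentially the same approach as the paper: both arguments use the isotypical decomposition of $M$ to identify $M^{U_+}$ (equivalently, the spaces $M_\lambda$ of $U_+$-invariants of weight $\lambda$) as highest weight vectors of $S$, conclude these lie in $R$ by hypothesis, and then use that a $G$-representation is the $G$-span of its highest weight vectors. The only cosmetic difference is that the paper works one weight $\lambda$ at a time while you work with all of $M^{U_+}$ at once; note that your sentence ``any nonzero vector in $M^{U_+}$ is a $G$-highest weight vector'' should strictly be stated for $T$-weight vectors in $M^{U_+}$, as you implicitly acknowledge in the parenthetical.
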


\begin{proof}
The inclusion $\langle M \cap R \rangle_G \subseteq M$ is clear. 
Let $M = \bigoplus_{\lambda \in \Lambda_+} M_\lambda \otimes V_\lambda$ be the $G$-isotypical decomposition of $M$, where $M_\lambda$ are the $U_+$ invariants in $M$ of weight $\lambda$.  For any $\lambda \in \Lambda_+$ we have $\langle M_\lambda \rangle_G = M_\lambda \otimes V_\lambda$.  By assumption $M_\lambda \subseteq R$, so $M_\lambda \otimes V_\lambda = \langle M_\lambda \rangle_G \subseteq \langle M \cap R \rangle_G$.  As a consequence we conclude that $M \subseteq \langle M \cap R \rangle_G$. 
\end{proof}

\begin{theorem}\label{thm-sec-main-stability}(Theorem \ref{thm-main-stability})
Let $dim(V) \geq r$, then $\RR(\P(\E \otimes V))$ is generated by the $\GL(V)$-span of the generators of $\RR(\P(\E \otimes E)) \subseteq \RR(\P(\E \otimes V))$. 
\end{theorem}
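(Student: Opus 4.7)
The plan is to apply Proposition \ref{prop-rep} with $H = \GL(E)$, $G = \GL(V)$, $R = \RR(\P(\E \otimes E))$, and $S = \RR(\P(\E \otimes V))$. To set this up, I would fix compatible bases for $E$ and $V$ so that $E \hookrightarrow V$ identifies $E$ with the span of the first $r$ basis vectors of $V$, inducing the upper-left inclusion $\GL(E) \hookrightarrow \GL(V)$ used in Lemma \ref{lem-coxinclude}. By that lemma, the resulting inclusion $R \hookrightarrow S$ is $\GL(E)$-equivariant, so the $\GL(E)$-action on $R$ is the restriction of the $\GL(V)$-action on $S$ along $\GL(E) \hookrightarrow \GL(V)$. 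The second half of Lemma \ref{lem-coxinclude}, combined with the hypothesis $\dim(V) \geq r$, guarantees that every $\GL(V)$-highest weight vector of $S$ lies in $R$. Thus all hypotheses of Proposition \ref{prop-rep} are satisfied.

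Taking $M = S$ in Proposition \ref{prop-rep} yields the identity $S = \langle R \rangle_{\GL(V)}$ as $\GL(V)$-representations: every element of $S$ is a $\K$-linear combination of $\GL(V)$-translates of elements of $R$. To pass from this representation-theoretic statement to one about algebra generators, I would exploit the fact that $\GL(V)$ acts on $S$ through algebra automorphisms, since it acts functorially on $\Sym(\E \otimes V)$ and the induced action on global sections respects multiplication. Let $B = \{b_1, \ldots, b_N\} \subset R$ be any homogeneous algebra generating set of $R$, which exists since $\P(\E \otimes E)$ is a Mori dream space. For $g \in \GL(V)$ and any polynomial expression $P(b_1, \ldots, b_N) \in R$, we have $g \cdot P(b_1, \ldots, b_N) = P(g \cdot b_1, \ldots, g \cdot b_N)$, which lies in the $\K$-subalgebra of $S$ generated by $\langle B \rangle_{\GL(V)}$. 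Combined with $S = \langle R \rangle_{\GL(V)}$, this shows that $\langle B \rangle_{\GL(V)}$ generates $S$ as an algebra, which is the desired conclusion.

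The main obstacle is the careful verification of the hypotheses of Proposition \ref{prop-rep}, and in particular the assertion that every $\GL(V)$-highest weight vector of $S$ already lies in $R$. This reduces to the fact that for any partition $\lambda$ with $\row(\lambda) \leq r$, the $\GL(V)$-highest weight vector of $\S_\lambda(V)$ only involves the first $r$ basis vectors, and so is contained in the image of $\S_\lambda(E) \hookrightarrow \S_\lambda(V)$. This is exactly the content of Lemma \ref{lem-coxinclude} and makes transparent why the bound $\dim(V) \geq r$ is essential: without it, the $\GL(V)$-isotypical decomposition of $S$ is truncated to partitions with at most $\dim(V)$ rows, and the machinery of Proposition \ref{prop-rep} no longer feeds back into a complete description of $S$.
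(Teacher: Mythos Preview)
Your proposal is correct and follows essentially the same route as the paper's second proof (the one ``Using Proposition \ref{prop-rep}''). Both arguments hinge on Lemma \ref{lem-coxinclude} to place all $\GL(V)$-highest weight vectors of $S=\RR(\P(\E\otimes V))$ inside $R=\RR(\P(\E\otimes E))$, and then on Proposition \ref{prop-rep} to pass from this to a statement about generators. The only cosmetic difference is in the final step: you invoke Proposition \ref{prop-rep} once with $M=S$ and then use directly that $\GL(V)$ acts by algebra automorphisms to push a generating set $B\subset R$ out to a generating set $\langle B\rangle_{\GL(V)}$ of $S$; the paper instead lets $F$ be the span of the generators, forms the $\GL(V)$-stable subalgebra $\K\langle F\rangle_{\GL(V)}\subseteq S$, and applies Proposition \ref{prop-rep} to $M=\K\langle F\rangle_{\GL(V)}$ to conclude it equals $S$. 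These two endings are logically equivalent and equally short.
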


\begin{proof}[Using twisted commutative algebras]
Corollary \ref{cor-dominantgen} implies that the functor $V \to \RR(\P(\E \otimes V))$ is a bounded twisted commutative algebra.  The theorem now follows from \cite[Proposition 9.1.6]{Sam-Snowden}. 
\end{proof}

\begin{proof}[Using Proposition \ref{prop-rep}]
From Lemma \ref{lem-coxinclude} we get an inclusion $\RR(\P(\E \otimes E)) \to \RR(\P(\E \otimes V))$ such that $\langle \RR(\P(\E \otimes E))\rangle_{\GL(V)} = \RR(\P(\E \otimes V))$. Let $F \subset \RR(\P(\E \otimes E)) \subset \RR(\P(\E \otimes V))$ be the vector space spanned by a $\K$-generating set of $\RR(\P(\E \otimes E))$.  The subspace $\langle F \rangle_{\GL(V)}$ then generates the $GL(V)$ subring $\K\langle F \rangle_{GL(V)} \subseteq \RR(\P(\E \otimes V))$. By construction, this subring is generated in the same degree as $\RR(\P(\E \otimes E))$.  By Lemma \ref{lem-coxinclude} and Proposition \ref{prop-rep} we have:\\

\[\K\langle F \rangle_{\GL(V)} = \langle \K\langle F \rangle_{\GL(V)} \cap \RR(\P(\E \otimes E)) \rangle_{\GL(V)} = \langle \RR(\E \otimes E) \rangle_{\GL(V)} = \RR(\P(\E \otimes V))\]\\
\end{proof}

\begin{remark}
Note that Theorem \ref{thm-main} shows that it is enough to check $\P(\E \otimes V)$ is a Mori dream space for $dim(V) = r-1$, but the stability of generators does not occur until one dimension more: $\P(\E \otimes E)$.  
\end{remark}

\begin{remark}
If $\FL(\E)$ is a Mori dream space, the functor $V \to \RR(\FL(\E \otimes V))$ is a twisted commutative algebra which is unbounded, yet finitely generated for each $V$. 
\end{remark}

\section{Examples}\label{sec-examples}

In this section we discuss the flag bundles of several families of toric vector bundles. 

\subsection{Rank 2}

Suppose $\E$ is rank $2$, then a result of Gonz\'alez (see also \cite{GHPS}, \cite{Kaveh-Manon-tvb}, and \cite{Nodland}) says that $\P\E \cong \FL(\E)$ is a Mori dream space.  It follows immediately from Theorem \ref{thm-main} that $\P(\E \otimes V)$ and $\FL(\E \otimes V)$ are a Mori dream spaces for any vector space $V$.  

\begin{proposition}\label{prop-r2stable}
Let $\E$ be a rank $2$ toric vector bundle, and let $V$ be finite dimensional vector space, then $\RR(\P(\E \otimes V))$ is generated in $\Sym$ degrees $1$ and $2$.
\end{proposition}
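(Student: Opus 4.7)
The plan is to combine the stability statement of Theorem \ref{thm-sec-main-stability} with the rank $2$ coincidence $\FL(\E) = \P\E$ and an explicit degree bound for $\RR(\P\E)$ coming from Gonz\'alez's construction.

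First I would reduce to the case $V = E$. When $\dim V \leq 1$ the projectivization $\P(\E \otimes V)$ is either empty or canonically isomorphic to $\P\E$, and the claim reduces to Gonz\'alez's explicit description of $\RR(\P\E)$ (see below). When $\dim V \geq 2 = r$, Theorem \ref{thm-sec-main-stability} asserts that $\RR(\P(\E \otimes V))$ is generated as a $\GL(V)$-algebra by a $\K$-generating set of the subring $\RR(\P(\E \otimes E)) \subseteq \RR(\P(\E \otimes V))$. Since the $\GL(V)$ action preserves the $\Sym$-grading, it suffices to bound the $\Sym$-degrees of a generating set of $\RR(\P(\E \otimes E))$.

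Second, I would pass to $U$-invariants. Since $\E$ has rank $2$, $\FL(\E) = \P\E$, and Theorem \ref{thm-coxflagvector} gives an isomorphism
\[\RR(\P(\E \otimes E))^U \;\cong\; \RR(\P\E)[t],\]
where $t$ has $\Sym$-degree $r = 2$ and corresponds to the summand $\bigwedge^2 \E \otimes \bigwedge^2 E$. By Corollary \ref{cor-dominantgen}, any generating set of the $U$-invariant subring lifts to a generating set of the full Cox ring $\RR(\P(\E \otimes E))$ with matching $\Sym$-degrees. So it suffices to show that $\RR(\P\E)[t]$ is generated in $\Sym$-degrees $\leq 2$, which reduces the problem to bounding the $\Sym$-degrees of generators of $\RR(\P\E)$ by $1$.

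The main obstacle is this last input: the assertion that in rank $2$ the Cox ring $\RR(\P\E)$ admits a generating set in $\Sym$-degrees $0$ and $1$, rather than merely being finitely generated. This is the content of Gonz\'alez's explicit construction (cf.\ \cite{GHPS}, \cite{Nodland}, \cite{Kaveh-Manon-tvb}); in our framework it can be verified by running \cite[Algorithm 5.6]{Kaveh-Manon-tvb} on the Khovanskii basis $\{e_1, e_2\}$ of the fiber $\Sym(E)$. The output generators consist of degree-$0$ pullbacks from the Cox ring of $X(\Sigma)$ and degree-$1$ lifts picked out by the Klyachko filtrations of $\E$; in rank $2$ every S-pair obstruction closes at $\Sym$-degree $\leq 1$, so no higher-degree generators are introduced. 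Adjoining $t$ then yields $\RR(\P\E)[t]$ generated in $\Sym$-degrees $\leq 2$, and combining with the first two steps completes the proof.
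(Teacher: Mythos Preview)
Your argument is correct and follows essentially the same skeleton as the paper's proof: pass to $U$-invariants via Theorem \ref{thm-coxflagvector}, identify the result with $\RR(\P\E)[t]$, invoke degree-$1$ generation of $\RR(\P\E)$ in rank $2$, and lift via Corollary \ref{cor-dominantgen}. Two small points of divergence are worth noting. First, you make a detour through Theorem \ref{thm-sec-main-stability} to reduce to $V = E$, whereas the paper applies the $U_V$-invariants identification directly for arbitrary $V$ with $\dim V \geq 2$; since Lemma \ref{lem-Coxflag} already gives $\Sym(E\otimes V)^{U_V} \cong \RR(\FL(E^\vee))[t]$ whenever $\dim V \geq r$, the stability theorem is not needed here, and the paper's route is shorter. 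Second, for the key input that $\RR(\P\E)$ is generated in $\Sym$-degree $1$ when $\E$ has rank $2$, your appeal to Algorithm 5.6 and an informal ``every S-pair obstruction closes at degree $\leq 1$'' is a sketch rather than a proof; the paper simply cites \cite[Corollary 6.7]{Kaveh-Manon-tvb}, which is the precise statement you want.
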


\begin{proof}
Let $U$ be the maximal unipotent subgroup of upper triangular matrices in $\GL(V)$, then theorem \ref{thm-coxflagvector} implies that $\RR(\P(\E \otimes V))^U \cong \RR(\P\E)[t]$, where $t$ is a parameter of dominant weight given by the partition $(1, 1)$ of $r = 2$. The algebra $\RR(\P\E)$ is always generated in degree $1$ (see \cite[Corollary 6.7]{Kaveh-Manon-tvb}), so $\RR(\P(\E \otimes V))^U$ is generated by the sections of $\S_{(1, 0)}(\E) \otimes \L$ and $\S_{(1, 1)}(\E) \otimes \L$, where $\L \in \Pic(X(\Sigma))$. Using the Cauchy identities and Corollary \ref{cor-dominantgen}, we see that $\RR(\P(\E\otimes V))$ is generated by sections of $\E \otimes V$ and $\Sym^2(\E \otimes V)$. 
\end{proof}

\begin{question}
If $\FL(\E)$ is a Mori dream space, is $\FL(\E \oplus \F)$ a Mori dream space when $\F$ is rank 1 or 2? 
\end{question}

\subsection{Uniform sparse toric vector bundles}

\emph{Complete intersection toric vector bundles} are introduced in \cite{Kaveh-Manon-tvb} as the class of toric vector bundles $\E$ with linear ideal $L\subset \K[y_1, \ldots, y_s]$ and diagram $D \in \Delta(L, \Sigma)$ whose total coordinate ring $\RR(\P\E)$ is presented by the simplest expected relations:  homogenizations of a minimal generating set of $L$.  In particular, $\RR(\P\E)$ is always a complete intersection.  For the following, let $M$ be an $s\times d$ matrix of rank $d$ such that $L$ is generated by the rows of $M$. For a subset $A\subset [n]$ let $M_A$ be the matrix obtained from $M$ by omitting columns where the rows of the diagram $D$ corresponding to $A$ do not share a common minimal entry.  Finally, let $m_A$ be the rank of $M_A$. The following is \cite[Proposition 6.2]{Kaveh-Manon-tvb}.

\begin{proposition}\label{prop-formal}
The toric vector bundle $\E$ corresponding to $D \in \Delta(L, \Sigma)$ is a complete intersection toric vector bundle if and only if for all $i \in A \subseteq [n]$, $1 + m_{\{i\}} < |A| + m_A$. 
\end{proposition}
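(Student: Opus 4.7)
My plan is to reduce the proposition to a height computation for the ideal of homogenized relations inside a polynomial ring, and then verify the condition stratum by stratum. From the setup of Section \ref{sec-rees}, the Cox ring has a natural presentation
\[
\RR(\P\E) \;\cong\; \K[y_1,\ldots,y_s,x_1,\ldots,x_n]/J,
\]
where the $y_j$ lift a Khovanskii basis of the fiber algebra $\Sym(E) = \K[\mathbf{y}]/L$ and the $x_i$ correspond to the $T_N$-invariant Weil divisors. Let $\tilde{L}\subseteq J$ denote the subideal generated by the $d$ homogenizations of a minimal set of linear generators of $L$. Since $\dim \RR(\P\E) = n + \textup{rk}(\E)$, the codimension of $J$ in the ambient polynomial ring equals $d$; the complete intersection property (per the definition used here) is then equivalent to the single statement $\textup{ht}(\tilde L) = d$, which simultaneously forces $\tilde L = J$ and makes the listed generators a regular sequence.

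To verify the height, I would stratify $\Spec(\K[\mathbf{y},\mathbf{x}]/\tilde{L})$ according to which subset $A\subseteq[n]$ of the $x$-coordinates vanishes. The crucial dictionary is that on the subscheme $V_A$ cut out by $\{x_i = 0 : i \in A\}$, a homogenization of a generator of $L$ restricts nontrivially exactly when its surviving $y$-terms are those indexed by columns retained in $M_A$; this is precisely the content of the column-deletion rule defining $m_A$. Hence the restricted linear system on $V_A$ has rank $m_A$ in the relevant $y$-coordinates, and a direct dimension count yields
\[
\dim V_A \;=\; (s - m_A) + (n - |A|).
\]
The condition $\textup{ht}(\tilde L) = d$ translates, after subtracting $\dim \RR(\P\E)$, into the requirement that for every $A \subseteq [n]$ with $|A|\geq 2$ the codimension of $V_A$ must strictly exceed the codimension of each $V_{\{i\}}$ with $i\in A$, which is exactly the inequality $1 + m_{\{i\}} < |A| + m_A$.

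The argument then closes in both directions: assuming the inequality for all pairs $(A,i)$ with $i\in A$, an inductive application of Krull's principal ideal theorem across the strata forces $\textup{ht}(\tilde L) = d$, so $\tilde L$ is a complete intersection; conversely, any violation at $(A,i)$ produces a stratum of strictly larger dimension than expected, yielding an excess component of $V(\tilde{L})$ that prevents the complete intersection property. The step I expect to require the most care is the algebraic-combinatorial dictionary in the middle paragraph: one must confirm that the column-deletion rule faithfully records when a homogenized linear form vanishes identically on $V_A$ (as opposed to merely losing some monomials), since this is exactly where the matrix rank $m_A$ enters the dimension formula in place of a naive monomial count, and any off-by-one mismatch corrupts the final inequality.
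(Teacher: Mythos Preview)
First, note that the paper does not actually prove this proposition: the sentence immediately preceding it reads ``The following is \cite[Proposition 6.2]{Kaveh-Manon-tvb},'' so the result is quoted from the cited reference and no argument is given here. Any comparison therefore has to be against that external source, not against anything in this paper.

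As for your sketch, it has the right geometric picture (stratify by which $x_i$ vanish and count dimensions), but there are two genuine gaps.

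\textbf{(i) The height condition does not yield the stated inequality.} With your own formula $\dim V_A = (s-m_A)+(n-|A|)$, the requirement $\textup{ht}(\tilde L)=d$ becomes $\dim V_A \le n+s-d$, i.e.\ $d \le |A|+m_A$ for every $A$. This is \emph{not} the comparative inequality $1+m_{\{i\}} < |A|+m_A$; the two differ already when $m_{\{i\}}=d$. The comparative form arises instead from asking that, for each $i$, the closed set $V(\tilde L,x_i)=\bigcup_{i\in A} V_A$ be irreducible, i.e.\ that the open stratum $V_{\{i\}}$ strictly dominate every deeper stratum $V_A$ with $i\in A$. That irreducibility requirement is exactly what the criterion of \cite[Theorem~1.5]{Kaveh-Manon-tvb} (used later in Section~\ref{sec-projectivespace}) demands, and it is stronger than the bare height bound.

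\textbf{(ii) You are assuming part of the conclusion.} Writing $\RR(\P\E)\cong\K[y_1,\dots,y_s,x_1,\dots,x_n]/J$ presupposes that the lifts of the $y_j$ together with the $x_i$ already generate the Cox ring. Section~\ref{sec-rees} only gives $\RR(\P\E)$ as a Rees algebra of filtrations; it does not furnish such a presentation in general, and indeed the examples in Section~\ref{sec-projectivespace} (where an extra generator $W$ is required for $\T_n\otimes\K^n$) show it can fail. Establishing that these elements generate is part of what ``complete intersection toric vector bundle'' means, so it cannot be taken as input. Relatedly, even when $\textup{ht}(\tilde L)=d$, a complete intersection ideal need not be prime, so $\tilde L=J$ does not follow without the separate primality check of each $\langle \tilde L,x_i\rangle$.

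In short, your stratification and dimension count are the right ingredients, but the logical engine is the primality criterion of \cite{Kaveh-Manon-tvb} rather than a height computation; once you switch to that, the inequality $1+m_{\{i\}}<|A|+m_A$ falls out naturally.
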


\noindent
This condition defines a finite polyhedral complex in $\Delta(L, \Sigma)$, see \cite[Proposition 6.17]{Kaveh-Manon-tvb}.  

The class of complete intersection toric vector bundles contains two distinguished subclasses.  First, we say a toric vector bundle $\E$ with diagram $D$ is \emph{sparse} if each row of $D$ has at most one non-zero entry.  The class of sparse toric vector bundles contains all vector bundles of rank $2$, all tangent bundles of smooth toric varieties, and more generally coincides with those toric vector bundles whose Klyachko filtrations contain at most one step of dimension $1$.  The projectivizations of sparse toric vector bundles can be shown to belong to a distinguished class of Mori dream spaces called \emph{arrangement varieties} (see \cite{Hausen-Hische-Wrobel}).  

Second, we say a toric vector bundle $\E$ is \emph{uniform} if the matrix $M$ is general - ie has no vanishing minors.  Not all uniform toric vector bundles are complete intersection toric vector bundles, but the condition in Proposition \ref{prop-formal} simplifies considerably for uniform toric vector bundles.  Such a toric vector bundle is complete intersection if and only if $1 + d < |A| + n_A$ for all $i \in [A] \subseteq [n]$, where $n_A$ is the minimum of $d$ and the number of columns of $M_A$.  We say a uniform toric vector bundle is of type $U^s_r$ if the matroid corresponding to $L$ is uniform of rank $r$ on $s$ elements.

If $\E$ is complete intersection and $V$ is any vector space of dimension $\ell$, then we can ask if $\E \otimes V$ is also complete intersection. The following is straightforward using Proposition \ref{prop-formal}.

\begin{proposition}\label{prop-formalstability}
If $\E$ is complete intersection, then $\E \otimes V$ is complete intersection if and only if for all $i \in A \subseteq [n]$, $1 + \ell m_{\{i\}} < |A| + \ell m_A$.
\end{proposition}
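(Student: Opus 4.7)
The plan is to apply Proposition \ref{prop-formal} directly to $\E \otimes V$ after describing its linear ideal and diagram in terms of the data $(L, M, D)$ attached to $\E$. The intuition is that tensoring by a trivial vector space $V$ of dimension $\ell$ simply duplicates the combinatorial structure of $\E$ in a way that is compatible with the complete-intersection criterion: each piece of the diagram and each generator of the linear ideal is replicated $\ell$ times, while the rank conditions scale linearly in $\ell$.

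First I would identify the defining matrix and diagram of $\E \otimes V \cong \E^{\oplus \ell}$. Its linear ideal $L'$ sits in a polynomial ring on $s\ell$ variables indexed by $[s] \times [\ell]$, generated by a block-diagonal matrix $M' = M \oplus \cdots \oplus M$ of rank $\ell d$. Moreover, since the Klyachko filtration of a trivial extension satisfies $F_\rho^k(\E \otimes V) = F_\rho^k(\E) \otimes V$ for each ray $\rho$, the diagram $D'$ has rows indexed by the same rays as $D$, columns indexed by $[s] \times [\ell]$, and entries $D'_{\rho, (k, a)} = D_{\rho, k}$. In other words, $D'$ is obtained from $D$ by replicating each column $\ell$ times without changing its entries.

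Next I would compute $m'_A$ for each $A \subseteq [n]$. Since a pair $(k, a)$ indexes a retained column of $M'_A$ precisely when $k$ indexes a retained column of $M_A$, the submatrix $M'_A$ is block-diagonal with $\ell$ copies of $M_A$; in particular $m'_A = \ell \, m_A$ and $m'_{\{i\}} = \ell \, m_{\{i\}}$. Substituting into the inequality from Proposition \ref{prop-formal} applied to $\E \otimes V$ yields exactly $1 + \ell \, m_{\{i\}} < |A| + \ell \, m_A$, which is the stated condition.

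The main technical step to verify is the diagram computation: one must confirm that a Khovanskii basis of $\Sym(\E)$ lifts to a Khovanskii basis of $\Sym(\E \otimes V)$ consisting of $\ell$ copies of the original generators, each carrying the same weight along every ray. This reduces to checking that tensoring with the trivial factor $V$ introduces no new tropical data beyond what is already present in $\E$, and that the minimality criterion defining $M_A$ behaves coordinate-wise under the replication. Once this identification is established, the remainder of the argument is a direct substitution into Proposition \ref{prop-formal}.
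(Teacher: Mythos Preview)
Your proposal is correct and follows exactly the approach the paper intends: the paper itself offers no argument beyond the remark that the result is ``straightforward using Proposition~\ref{prop-formal},'' and what you have written is precisely the unpacking of that remark---identify the data $(M',D')$ for $\E\otimes V$ as $\ell$ block-diagonal copies of $(M,D)$, observe that $m'_A = \ell m_A$ because the retained-column condition depends only on the $D$-entries and not on the block index, and substitute into the inequality of Proposition~\ref{prop-formal}. Your flagged technical step (that a Khovanskii basis for $\Sym(\E)$ replicates to one for $\Sym(\E\otimes V)$ with the same ray weights) is the only thing requiring verification, and it holds for the reason you indicate: the Klyachko filtrations satisfy $F^k_\rho(\E\otimes V) = F^k_\rho(\E)\otimes V$.
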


\noindent
 We define the \emph{CI-stability} of a complete intersection toric vector bundle to be the maximal $\ell$ such that $\E \otimes V$ is complete intersection for $V$ of dimension $\ell$. If $\E$ is given by $M$ and $D$, Proposition \ref{prop-formalstability} can compute the CI-stability:
 
\begin{equation}\label{eq-fstab}
\textup{CI}(M, D) = \MIN_{i \in A \subseteq [n]}\left\{\left \lfloor \frac{|A|-1}{m_{\{i\} } - m_A}\right\rfloor\right\}-1
\end{equation}\\

\noindent
We compute the right hand side of (\ref{eq-fstab}) when $\E$ is both uniform and sparse.  

\begin{theorem}\label{thm-uniformsparse}
Let $\E$ be a sparse $U^r_s$ toric vector bundle with matrix $M$ and diagram $D$, then $\textup{CI}(M, D) = \lfloor \frac{s-1}{s-r} \rfloor - 1$. In particular, if $(r, s)$ satisfies $(\ell-1)s < \ell r - 1$ (see Figure \ref{fig-uniformsparse}), then $\E \otimes V$ with $dim(V) \leq \ell$ is complete intersection, and any $\FL_I(\E)$ with $|I| \leq \ell$ is a Mori dream space. 
\end{theorem}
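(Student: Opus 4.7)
The plan is to evaluate the combinatorial formula \eqref{eq-fstab} directly, using the explicit structure imposed by the sparse uniform hypothesis. For each row $j \in [n]$ of $D$ with a nonzero entry, let $k_j \in [s]$ denote the unique column carrying that entry, and for $A \subseteq [n]$ set $J(A) = \{k_j : j \in A\} \subseteq [s]$. By the definition of $M_A$, one retains exactly those columns of $M$ on which every row of $A$ achieves its minimum value $0$, namely the columns in $[s] \setminus J(A)$. Since $L$ realizes the uniform matroid $U^s_r$, generic rank kicks in and the submatrix achieves its maximum possible rank, so $m_A = \min\{s-r,\, s - |J(A)|\}$. In particular $m_{\{i\}} = s - r$ (using $r \geq 1$), and hence $m_{\{i\}} - m_A = \max\{0,\, |J(A)| - r\}$, which is positive exactly when $|J(A)| > r$.

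On this range the ratio in \eqref{eq-fstab} becomes $(|A|-1)/(|J(A)|-r)$. Because $|J(A)| \leq |A|$, for fixed $|J(A)| = j$ the numerator is minimized by $|A| = j$, which is realized by choosing $A$ to consist of $j$ rows of $D$ with pairwise distinct $k$-values. The sparse $U^s_r$ hypothesis supplies enough such rows so that every $j \in \{r+1, \ldots, s\}$ is achievable. The minimization reduces to analyzing $f(j) := (j-1)/(j-r)$ on $\{r+1, \ldots, s\}$. A direct computation gives
\[
f(j) - f(j+1) \;=\; \frac{r-1}{(j-r)(j-r+1)} \;\geq\; 0
\]
for $r \geq 1$, so $f$ is non-increasing, and its minimum is $f(s) = (s-1)/(s-r)$. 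Since this minimum is attained at a point of $S$, minimum commutes with floor, and substituting into \eqref{eq-fstab} yields $\textup{CI}(M,D) = \lfloor (s-1)/(s-r) \rfloor - 1$.

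For the second assertion, the rearrangement $(\ell-1)s < \ell r - 1 \Leftrightarrow \ell(s-r) < s-1 \Leftrightarrow \ell < (s-1)/(s-r)$ places $\ell$ within the CI-stability window, so by Proposition \ref{prop-formalstability} the tensor product $\E \otimes V$ is a complete intersection toric vector bundle for every $V$ with $\dim V \leq \ell$. In particular $\P(\E \otimes V)$ is a Mori dream space, and then Theorem \ref{thm-main} yields that $\FL_I(\E)$ is a Mori dream space for every $I$ with $\max(I) \leq \ell$. I expect the most delicate step in the plan to be the claim that $|A| = j$ is genuinely achievable for all $j$ up to $s$: this really is a hypothesis on the sparse pattern of $D$, namely that the $k_j$'s collectively cover $[s]$ and that there are at least $s$ rows with nonzero entries. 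Rows of $D$ carrying no nonzero entry contribute nothing to $J(A)$ and can be discarded from the minimization without affecting the final answer.
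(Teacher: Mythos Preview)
Your proof is correct and follows essentially the same approach as the paper: both reduce formula \eqref{eq-fstab} to a one-variable minimization (you parametrize by $j = |J(A)|$, the paper by $x = m_A = s - |A|$ after its ``without loss of generality'' step, and these are related by $j = s - x$) and then identify the minimum at the endpoint corresponding to $|A| = s$. Your version is a bit more careful---you explicitly verify the monotonicity via $f(j)-f(j+1) = (r-1)/((j-r)(j+1-r))$ and you flag the achievability hypothesis (that the sparse pattern of $D$ actually hits all $s$ columns) which the paper's ``without loss of generality'' glosses over---but the argument is the same.
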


\begin{proof}
Using Proposition \ref{prop-formalstability}, any $\ell$ as above must satisfy $1 + \ell(r-s) < |A| + \ell m_A$ for all $A \subseteq [n]$. This is because $m_{\{i\}} = d$ for any $i \in [n]$.  Moreover, without loss of generality we can assume that the non-zero entries in the rows corresponding to $A$ are in distinct locations, so that $m_A = \MIN\{d, |A^c|\}$.  We then get the inequality $\ell < \frac{s - x - 1}{d-x}$ for all $1 \leq x \leq d$.  This is minimal for $\frac{s-1}{s-r}$. 
\end{proof}

\begin{example}[Example 5.14 from \cite{Kaveh-Manon-tvb}]
Let $M$ be the $1 \times 6$ all $1$'s matrix, and let \\

\[D = \begin{bmatrix}
4 & 0 & 0 & 1 & 3 & 2\\
0 & 4 & 0 & 2 & 1 & 3\\
0 & 0 & 4 & 3 & 2 & 1
\end{bmatrix}.
\]\\

\noindent
This information gives a toric vector bundle of rank $5$ over $\P^2$. The pair $(M, D)$ satisfies the conditions of Proposition \ref{prop-formal}, but this is not the case for any higher sum.  

\end{example}

\begin{example}
The point $(r, s) = (4, 6)$ in Figure \ref{fig-uniformsparse} satisfies $2 < \frac{6-1}{6-4}$, but $3 \nless \frac{6-1}{6-4}$, this means that the sum $\E \oplus \E$ of a uniform sparse toric vector bundle with matroid $U^6_4$ with itself is also complete intersection, but $\E \oplus \E \oplus \E \oplus \ldots$ is not complete intersection.  For example, we can take $M$ to be a generic $2 \times 6$ matrix, and $D$ to be the $6 \times 6$ identity matrix, this is a point in $\Delta(\P^2\times\P^2, U^6_4)$. As a corollary of Theorem \ref{thm-uniformsparse}, this data defines a toric vector bundle $\E$ over $\P^2\times\P^2$ with $\Gr_2(\E)$ a Mori dream space. 

Continuing in this way, a generic $2 \times 2n$ matrix $M$ and the $2n \times 2n$ identity matrix defines a rank $2(n-1)$ toric vector bundle $\E(n)$ over $\P^{n-1}\times \P^{n-1}$ with $\Gr_2(\E(n))\ldots \Gr_{n-1}(\E(n))$ Mori dream spaces. These bundles correspond to the circled points in Figure \ref{fig-uniformsparse}.

\end{example}

The case $s = r + 1$ are the sparse hypersurfaces, these toric vector bundles form an extremal family within the uniform sparse toric vector bundles.  Any such toric vector bundle has $M$ an all $1$'s row with $r+1$ entries, and the associated sparse diagram $D$ is always a point in $\Delta(U^{r+1}_r, \Sigma)$, when $\Sigma$ is any fan with $r+1$ rays. For any sparse hypesurface toric vector bundle $\E$, $\textup{CI}(M, D) = \lfloor \frac{r + 1 - 1}{r + 1 - r}\rfloor -1 = r-1$. Theorem \ref{thm-main} and Proposition \ref{prop-formalstability} immediately imply the following.    

\begin{corollary}\label{cor-hypersparse}
If $\E$ is a sparse hypersurface toric vector bundle, then $\P(\E \otimes V)$ and $\FL(\E)$ are Mori dream spaces for any vector space $V$. 
\end{corollary}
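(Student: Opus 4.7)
My plan is to combine the CI-stability calculation that immediately precedes the corollary with the equivalences supplied by Theorem \ref{thm-main}. The preceding paragraph already observes that for a sparse hypersurface bundle we have $\textup{CI}(M,D) = \lfloor r/1 \rfloor - 1 = r-1$. By Proposition \ref{prop-formalstability} this means that $\E \otimes V$ is a complete intersection toric vector bundle for every $V$ with $\dim V \leq r-1$. Since complete intersection toric vector bundles have finitely generated Cox rings by Proposition \ref{prop-formal} (indeed, $\RR(\P(\E \otimes V))$ is presented by the homogenizations of a minimal generating set of the associated linear ideal), the projectivization $\P(\E \otimes V)$ is a Mori dream space for every $V$ of dimension at most $r-1$.

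With this base case in hand, I would invoke Theorem \ref{thm-main} in its ``$\ell = r-1$'' form. That theorem asserts that $\P(\E \otimes V)$ is a Mori dream space for all $\dim V \leq r-1$ if and only if $\FL_I(\E)$ is a Mori dream space for every $I$ with $\max(I) \leq r-1$. Because $\E$ has rank $r$, the index set $I = \{1, \ldots, r-1\}$ for the full flag bundle $\FL(\E)$ satisfies $\max(I) = r-1$, so $\FL(\E)$ itself is among these flag bundles. This gives the first conclusion, that $\FL(\E)$ is a Mori dream space.

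To finish, I would apply the other direction of Theorem \ref{thm-main}: the full flag bundle $\FL(\E)$ is a Mori dream space if and only if $\P(\E \otimes V)$ is a Mori dream space for every finite dimensional vector space $V$, with no restriction on $\dim V$. Since we just established that $\FL(\E)$ is Mori dream, we therefore conclude that $\P(\E \otimes V)$ is a Mori dream space for all $V$, completing the proof.

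There is no real obstacle here; the main content sits in Theorem \ref{thm-uniformsparse} and the bootstrapping mechanism of Theorem \ref{thm-main}. The only thing worth being careful about is matching the indexing conventions: confirming that $s-r = 1$ in the hypersurface case so the stability bound $r-1$ is exactly $\mathrm{rank}(\E) - 1$, which is precisely the threshold needed for $\FL(\E)$ to fall within the hypothesis of Theorem \ref{thm-main}.
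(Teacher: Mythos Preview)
Your proposal is correct and follows exactly the route the paper takes: the sentence preceding the corollary records $\textup{CI}(M,D)=r-1$ and says ``Theorem \ref{thm-main} and Proposition \ref{prop-formalstability} immediately imply the following,'' and you have simply unpacked that implication. One minor citation point: the fact that a complete intersection toric vector bundle has finitely generated Cox ring is part of the \emph{definition} given just before Proposition \ref{prop-formal}, not a consequence of that proposition itself.
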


In \cite{Kaneyama88}, Kaneyama shows that any irreducible rank $n$ toric vector bundle on $\P^n$ is either $\E \otimes \O(d)$ or $\E^*\otimes \O(d)$ for $d \in \Z$, where $\E$ is defined by an exact sequence:

\[ 0 \to \O \to \bigoplus_{i =0}^n \O(a_i) \to \E \to 0,\]

\noindent
and the $a_i$ are positive integers. Kaneyama's bundles $\E$ are the sparse hypersurface toric vector bundles defined by the diagonal matrix defined by the $a_0, \ldots, a_n$. 

\begin{corollary}\label{cor-kaneyama}
Let $\F$ be an irreducible toric vector bundle of rank $n$ on $\P^n$, then $\P(\F)$ is Mori dream space. Moreover, if $\F \cong \E\otimes \O(d)$ then $\FL(\F)$ and $\P(\F\otimes V)$ are Mori dream spaces for any finite dimensional vector space $V$. 
\end{corollary}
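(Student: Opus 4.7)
The plan is to combine Kaneyama's classification with Corollary \ref{cor-hypersparse} and Theorem \ref{thm-main}. By Kaneyama's theorem, any irreducible rank $n$ toric vector bundle $\F$ on $\P^n$ has the form $\E\otimes\O(d)$ or $\E^*\otimes\O(d)$, where $\E$ fits into the short exact sequence recalled before the statement. As noted in the paragraph preceding the corollary, such an $\E$ is precisely a sparse hypersurface toric vector bundle on $\P^n$: the matrix $M$ is the $1\times(n+1)$ all-ones row (corresponding to the inclusion $\O \to \bigoplus_i \O(a_i)$), and the diagram $D$ is the diagonal matrix with entries $a_0,\dots,a_n$. Corollary \ref{cor-hypersparse} therefore immediately tells us that $\FL(\E)$ and all of the spaces $\P(\E\otimes V)$ are Mori dream spaces.

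First I would handle the case $\F\cong\E\otimes\O(d)$. The key input is that projectivization and the flag bundle construction are invariant, as $T_N$-schemes over $X(\Sigma)$, under twisting by a line bundle: for any line bundle $\L$ on $\P^n$ there are canonical isomorphisms
\[\P(\E\otimes\L) \;\cong\; \P(\E), \qquad \FL(\E\otimes\L)\;\cong\;\FL(\E),\]
induced fiberwise by the identifications of one-dimensional quotients and of flags of subspaces. Since $\F \otimes V = (\E\otimes V)\otimes \O(d)$, applying these isomorphisms gives $\P(\F)\cong\P(\E)$, $\FL(\F)\cong\FL(\E)$, and $\P(\F\otimes V)\cong\P(\E\otimes V)$, all of which are Mori dream spaces by Corollary \ref{cor-hypersparse}.

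For the remaining case $\F \cong \E^*\otimes\O(d)$, the same twist-invariance of projectivization reduces $\P(\F)$ to $\P(\E^*)$, and I would then invoke the identification $\P(\E^\vee)\cong\Gr_{n-1}(\E)=\FL_{\{n-1\}}(\E)$ recalled in the introduction (which follows from $\bigwedge^{n-1}\E \cong \E^*\otimes\det(\E)$ together with twist invariance of $\P$). Since Corollary \ref{cor-hypersparse} tells us that the full flag bundle $\FL(\E)=\FL_{[n-1]}(\E)$ is a Mori dream space, Theorem \ref{thm-main} implies that every partial flag bundle $\FL_I(\E)$ with $\max(I)\le n-1$ is a Mori dream space; in particular $\FL_{\{n-1\}}(\E)\cong\P(\F)$ is one.

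The work is essentially bookkeeping on canonical isomorphisms and on matching Kaneyama's bundles with the sparse hypersurface data; the substantive content has already been carried out in proving Corollary \ref{cor-hypersparse} and Theorem \ref{thm-main}. One point worth noting is why the ``moreover'' portion of the corollary restricts to $\F \cong \E\otimes\O(d)$: the dual of a sparse hypersurface toric vector bundle is typically not itself sparse (nor even a complete intersection in the sense of Proposition \ref{prop-formal}), so Corollary \ref{cor-hypersparse} cannot be invoked directly for $\E^*$ or $\E^*\otimes V$, which is the only real obstacle to extending the stronger conclusion to the second Kaneyama family.
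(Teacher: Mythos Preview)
Your proof is correct and follows essentially the same approach as the paper. The only cosmetic difference is in the $\F\cong\E^*\otimes\O(d)$ case: where you invoke Theorem~\ref{thm-main} to conclude that $\FL_{\{n-1\}}(\E)$ is a Mori dream space once $\FL(\E)$ is, the paper instead phrases this as ``the Cox ring of $\P(\F^*)$ is a torus-invariant subring of $\RR(\FL(\F))$''---which is the observation, made after Theorem~\ref{thm-coxringunipotent}, that $\RR(\FL_Q(\PP))$ is a torus-invariant subring of $\RR(\FL(\PP))$ for any parabolic $Q$. These are two packagings of the same fact.
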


\begin{proof}
The toric vector bundle $\E$ is the sparse hypersurface toric vector bundle defined by the diagonal matrix with the integers $a_i$ along the diagonal.  The second state follows from Corollary \ref{cor-hypersparse}.  The Cox ring of $\F^*$ can be obtained as the invariant subring of the Cox ring of $\FL(\F)$ by the action of a torus; this implies the first statement.  
\end{proof}

\bigskip
\begin{figure}[ht]
\begin{tikzpicture}
\draw[thick,->] (0,0) -- (15,0);
\draw[thick,->] (0,0) -- (0,15);

\foreach \i in {1,...,14}
{
    \foreach \j in {1,...,\i}
        {
        \filldraw[black] (\j,\i) circle (1.5pt) node{};
        }
}

\draw[->] (0, 0) -- (15, 15);

\foreach \g in {2,...,13}
{ 
    \draw[->] (1, 1) -- (15-14/\g, 15);
}    

\node at (15-14/2 +.5, 15+.5) (){$\ell = 2$};
\node at (15-14/3 +.5, 15+.5) (){$\ell = 3$};
\node at (15-14/4 +.5, 15+.5) (){$...$};
\node at (14, 15-.5) (){$...$};

\node at (15, -.5) (a) {$r$};
\node at (-.5, 15) (b) {$s$};
\node at (-.5, -.5) (c) {$U^s_r$};

\draw[black] (4, 6) circle (4pt) node{};

\draw[black] (6, 8) circle (4pt) node{};

\draw[black] (8, 10) circle (4pt) node{};

\draw[black] (10, 12) circle (4pt) node{};

\draw[black] (12, 14) circle (4pt) node{};

\end{tikzpicture}
\caption{}\label{fig-uniformsparse}
\end{figure}

\section{The tangent bundle of $\P^n$}\label{sec-projectivespace}

In this section we describe presentations for the twisted Cox ring of the tangent bundle $\T_n$ of projective space $\P^n$, and the Cox ring of its full flag bundle $\FL\T_n$. 

\subsection{The twisted tangent bundle of $\P^n$}

The tangent bundle $\mathcal{T}_n$ is a sparse hypersurface toric vector bundle, where $D$ is the $n+1 \times n+1$ identity matrix, and $M$ is the $1 \times n+1$ all $1$'s matrix.  By Corollary \ref{cor-hypersparse}, $\P(\mathcal{T}_n \otimes V)$, and $\FL\mathcal{T}_n$ are Mori dream spaces for any vector space $V$, along with any Grassmannian bundle $\Gr_\ell(\mathcal{T}_n)$ by implication. We can extend these observations further with the next Lemma.  It is a straightforward consequence of \cite[Theorem 1.5]{Kaveh-Manon-tvb}. 

\begin{lemma}\label{lem-product}
Let $\E_1$ and $\E_2$ be vector bundles over toric varieties $Y(\Sigma_1)$ and $Y(\Sigma_2)$, respectively, and suppose that $\P(\E_1)$ and $\P(\E_2)$ are Mori dream spaces, then $\P(\E_1 \times \E_2)$ is a Mori dream space, where $\E_1 \times \E_2$ is the product toric vector bundle over $Y(\Sigma_1) \times Y(\Sigma_2)$. 
\end{lemma}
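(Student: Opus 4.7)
The plan is to identify $\RR(\P(\E_1\times\E_2))$ with the tensor product $\RR(\P\E_1)\otimes_\K\RR(\P\E_2)$ as a $\K$-algebra and then invoke the fact that a tensor product of two finitely generated $\K$-algebras is finitely generated.

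First, I would unpack the product: if $\pi_i\colon Y(\Sigma_1)\times Y(\Sigma_2)\to Y(\Sigma_i)$ denote the projections, then $\E_1\times\E_2\cong \pi_1^*\E_1\oplus \pi_2^*\E_2$ as a $T_{N_1}\times T_{N_2}$-equivariant bundle on the product. The usual decomposition of symmetric powers of a direct sum gives
\[\Sym^d(\pi_1^*\E_1\oplus \pi_2^*\E_2)\;\cong\; \bigoplus_{a+b=d}\pi_1^*\Sym^a(\E_1)\otimes \pi_2^*\Sym^b(\E_2).\]
Since $\Pic(Y(\Sigma_1)\times Y(\Sigma_2))=\Pic(Y(\Sigma_1))\oplus\Pic(Y(\Sigma_2))$ (both factors are smooth projective toric and so have $H^1(\O)=0$) and since $H^0$ distributes over box tensor products on a product of proper $\K$-schemes (via the projection formula applied to one factor at a time), twisting by an arbitrary $L_1\boxtimes L_2$ and taking global sections yields
\[H^0\bigl(Y(\Sigma_1)\times Y(\Sigma_2),\,\Sym^d(\E_1\times\E_2)\otimes L_1\boxtimes L_2\bigr)\;\cong\; \bigoplus_{a+b=d}H^0\bigl(Y(\Sigma_1),\Sym^a\E_1\otimes L_1\bigr)\otimes H^0\bigl(Y(\Sigma_2),\Sym^b\E_2\otimes L_2\bigr).\]

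Summing over all $(L_1,L_2,d)\in \Pic(Y(\Sigma_1))\oplus\Pic(Y(\Sigma_2))\oplus\Z$, where the $\Z$ records the $\Sym$-grading (equivalently, the Serre twist on $\P(\E_1\times\E_2)$), exhibits $\RR(\P(\E_1\times\E_2))$ as the coarsening of $\RR(\P\E_1)\otimes_\K\RR(\P\E_2)$ obtained by collapsing the two $\Sym$-gradings to their sum. The multiplicative structures match because they are both induced by the sheaf-of-algebras isomorphism $\Sym(\pi_1^*\E_1\oplus \pi_2^*\E_2)\cong \Sym(\pi_1^*\E_1)\otimes_\O \Sym(\pi_2^*\E_2)$, so the identification is one of $\K$-algebras.

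Finite generation of both $\RR(\P\E_i)$ then forces finite generation of the tensor product and hence of $\RR(\P(\E_1\times\E_2))$. Smoothness and projectivity of $\P(\E_1\times\E_2)$ are inherited from the projective bundle construction over the smooth projective base $Y(\Sigma_1)\times Y(\Sigma_2)$, so the Mori dream space criterion is met. The main step requiring care is verifying that the K\"unneth-type identification on global sections really is an isomorphism of algebras, not merely of graded vector spaces; this is a routine but somewhat bookkeeping-heavy local computation on products of trivializing affine charts. An alternative route, in the spirit of the hint toward \cite[Theorem 1.5]{Kaveh-Manon-tvb}, would build a Khovanskii basis for the Rees algebra of $\E_1\times\E_2$ by combining Khovanskii bases of the two factors with a block arrangement of diagrams sitting in $\Trop(L_1)\times\Trop(L_2)\subseteq \Delta(L_1\oplus L_2,\Sigma_1\times\Sigma_2)$ and invoke the associated lifting criterion; the final conclusion is the same.
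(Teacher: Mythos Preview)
Your argument is correct. The paper does not spell out a proof at all: it simply records the lemma as a ``straightforward consequence of \cite[Theorem~1.5]{Kaveh-Manon-tvb}'', i.e.\ the Khovanskii-basis lifting criterion that you sketch as your alternative route at the end. Your primary approach---identifying $\RR(\P(\E_1\times\E_2))$ with $\RR(\P\E_1)\otimes_\K\RR(\P\E_2)$ (up to coarsening the two $\Sym$-gradings to their sum) via the K\"unneth decomposition on the product, and then invoking finite generation of a tensor product of finitely generated $\K$-algebras---is genuinely different, more elementary, and self-contained: it needs only standard facts about $\Pic$ of a product of smooth projective toric varieties and $H^0$ of a box product on proper $\K$-schemes, and avoids any appeal to the tropical/diagram machinery. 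The paper's route has the advantage of staying inside the Rees-algebra/Khovanskii-basis framework used throughout the article (so one obtains an explicit generating set for the product by combining generating sets for the two factors, with the block diagram you describe), whereas your K\"unneth argument is cleaner if all one wants is the bare finite-generation statement.
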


\begin{corollary}\label{cor-product}
Let $\E_1$ and $\E_2$ be as above, and suppose that $\FL(\E_1)$ and $\FL(\E_2)$ are Mori dream spaces, then $\FL(\E_1 \times \E_2)$ and $\P((\E_1 \times \E_2)\otimes V)$ are Mori dream spaces for any $V$. 
\end{corollary}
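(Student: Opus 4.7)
The plan is to reduce the corollary to a combination of Theorem \ref{thm-main} (which trades flag bundles for projectivizations of tensor-sum bundles) and Lemma \ref{lem-product} (which handles products of Mori dream space projectivizations). The first step is to verify the natural isomorphism of toric vector bundles
\[ (\E_1 \times \E_2) \otimes V \;\cong\; (\E_1 \otimes V) \times (\E_2 \otimes V) \]
over $Y(\Sigma_1)\times Y(\Sigma_2)$. This is routine because tensoring by a trivial bundle $V$ commutes with pullback along either projection $Y(\Sigma_1)\times Y(\Sigma_2) \to Y(\Sigma_i)$, and $\E_1 \times \E_2$ is by definition the direct sum of these two pullbacks.

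Next I would exploit the hypothesis via Theorem \ref{thm-main}. Since $\FL(\E_1)$ is a Mori dream space, the theorem guarantees that $\P(\E_1 \otimes V)$ is a Mori dream space for every finite dimensional $V$; similarly for $\E_2$. Now apply Lemma \ref{lem-product} to the pair $\E_1 \otimes V$, $\E_2 \otimes V$ over $Y(\Sigma_1)$ and $Y(\Sigma_2)$ respectively. This yields that
\[ \P\bigl((\E_1 \otimes V) \times (\E_2 \otimes V)\bigr) \;=\; \P\bigl((\E_1 \times \E_2)\otimes V\bigr) \]
is a Mori dream space for every finite dimensional $V$.

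Finally, invoke the other direction of Theorem \ref{thm-main}, applied to the toric vector bundle $\E_1 \times \E_2$ over $Y(\Sigma_1)\times Y(\Sigma_2)$: the fact that $\P((\E_1 \times \E_2)\otimes V)$ is a Mori dream space for all $V$ implies that $\FL(\E_1 \times \E_2)$ is a Mori dream space. Both conclusions of the corollary follow from this single chain of implications.

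I do not expect any genuine obstacle here; the proof is essentially a bookkeeping argument chaining Theorem \ref{thm-main} with Lemma \ref{lem-product}. The only point requiring a moment of care is the compatibility isomorphism in the first step, namely that forming the external product of toric vector bundles commutes with tensoring by a trivial bundle $V$. Once that is observed, the argument is formal.
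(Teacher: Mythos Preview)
Your argument is correct and follows exactly the approach in the paper: the paper's proof is the single line ``We apply Lemma~\ref{lem-product} to $(\E_1 \otimes V) \times (\E_2\otimes V) \cong (\E_1 \times \E_2)\otimes V$,'' which implicitly uses Theorem~\ref{thm-main} in both directions just as you do. Your write-up simply unpacks the same chain of implications with more care.
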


\begin{proof}
We apply Lemma \ref{lem-product} to $(\E_1 \otimes V) \times (\E_2\otimes V) \cong (\E_1 \times \E_2)\otimes V$.
\end{proof}

Let $\bn = (n_1, \ldots, n_m)$ with $n_i > 0$, and let $\T_{\bn}$ denote the tangent bundle of $\prod_{i =1}^m \P^{n_i}$.

\begin{corollary}\label{cor-product2}
For any $V$, $\P(\T_\bn \otimes V)$ and $\FL \T_\bn$ are a Mori dream spaces.  
\end{corollary}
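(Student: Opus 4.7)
The plan is to derive Corollary \ref{cor-product2} directly from Corollary \ref{cor-hypersparse} together with a straightforward induction via Corollary \ref{cor-product}, using the product decomposition of the tangent bundle of a product.

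First, I would record the base case. The tangent bundle $\T_n$ of $\P^n$ is a sparse hypersurface toric vector bundle: its defining data consists of the $1 \times (n+1)$ all-ones matrix $M$ and the $(n+1)\times(n+1)$ identity diagram $D$, exactly as used in the preceding section. Hence Corollary \ref{cor-hypersparse} applies and tells us that $\P(\T_n\otimes V)$ and $\FL(\T_n)$ are Mori dream spaces for every finite dimensional $V$, for each single factor $\P^{n_i}$.

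Next I would identify $\T_\bn$ with a product toric vector bundle in the sense of Lemma \ref{lem-product}. Since the tangent sheaf of a product of smooth varieties is the direct sum of the pullbacks of the tangent sheaves of the factors, and this decomposition is equivariant with respect to the product torus action, we obtain the isomorphism
\[
\T_\bn \;\cong\; \T_{n_1} \times \T_{n_2} \times \cdots \times \T_{n_m}
\]
of toric vector bundles over $\prod_{i=1}^m \P^{n_i}$. This is precisely the kind of product to which Corollary \ref{cor-product} applies.

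Finally, I would induct on $m$. For $m=1$ the claim reduces to the base case just recorded. For the inductive step, assume $\FL(\T_{(n_1,\dots,n_{m-1})})$ and $\P(\T_{(n_1,\dots,n_{m-1})}\otimes V)$ are Mori dream spaces for all $V$, and combine this with the base case applied to $\T_{n_m}$. Corollary \ref{cor-product}, applied to $\E_1 = \T_{(n_1,\dots,n_{m-1})}$ and $\E_2 = \T_{n_m}$, then yields that both $\FL(\E_1\times \E_2) = \FL(\T_\bn)$ and $\P((\E_1\times \E_2)\otimes V) = \P(\T_\bn\otimes V)$ are Mori dream spaces for every $V$, completing the induction.

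There is no real obstacle beyond correctly identifying $\T_\bn$ as a product of the $\T_{n_i}$ in the toric-vector-bundle sense so that Lemma \ref{lem-product} (and hence Corollary \ref{cor-product}) may be applied; once this is noted, the result is purely formal given Corollary \ref{cor-hypersparse}.
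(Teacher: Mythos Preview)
Your proposal is correct and follows exactly the route the paper intends: the corollary is stated immediately after Corollary~\ref{cor-product} (with no separate proof given), and is meant to follow from the observation at the start of the subsection that each $\T_{n_i}$ satisfies the hypotheses of Corollary~\ref{cor-hypersparse}, together with the product decomposition $\T_\bn \cong \T_{n_1}\times\cdots\times\T_{n_m}$ and an inductive application of Corollary~\ref{cor-product}.
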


The Cox ring $\RR(\P\T_2)$ has the following presentation:\\

\[
\RR(\T_2) = \K[x_1, x_2, x_3, Y_1, Y_2, Y_3]/\langle x_1Y_1 + x_2Y_2 + x_3Y_3\rangle    
\]\\

\noindent
The rank of $\T_2$ is $2$, so by Theorem \ref{cor-dominantgen} we should expect higher degree generators and relations in the presentation of $\RR(\P(\T_2 \oplus \T_2))$.  We can directly compute (using \cite[Algorithm  5.6]{Kaveh-Manon-tvb}) the Cox ring of $\P(\T_2 \oplus \T_2)$ to be the quotient of $\K[x_1, x_2, x_3, Y_1, Y_2, Y_3, Z_1, Z_2, Z_3, W]$ by the ideal:\\

\[I_{2,2} = \langle Y_3Z_2 -Y_2Z_3 -x_1W,\ Y_3Z_1-Y_1Z_3+x_2W,\ Y_2Z_1-Y_1Z_2 -x_3W,\] \[x_1Z_1+x_2Z_2+x_3Z_3,\ x_1Y_1+x_2Y_2+x_3Y_3\rangle.\]\\

\noindent
After a change of coordinates, this ideal is recognizable as the Pl\"ucker ideal defining the Grassmannian variety $\Gr_2(5) \subset \P^{9}$ in its Pl\"ucker embedding. In the grading by $\Pic(\P(\T_2 \oplus \T_2)) \cong \Pic(\P^2)\times \Z \cong \Z\times\Z$, $\deg(x_i) = (-1,0)$, $\deg(Y_i) = \deg(Z_j) = (1, 1)$, and $\deg(W) = (3, 2)$. In particular, $W$ has $\Sym$ degree $2$ as in Proposition \ref{prop-r2stable}. The purpose of the rest of this section is to find the appropriate generalization of these observations.  We start with the case $\T_n\otimes \K^m$ with $m < n$.

\begin{proposition}
Let $1 \leq m < n$, then $\RR(\T_n\otimes \K^m)$ has the following presentation:\\

\[\RR(\P(\T_n\otimes \K^m)) = \K[x_j, Y_{ij} \mid 1 \leq i \leq m, 0 \leq j \leq n]/\langle \sum x_jY_{ij} \mid 1 \leq i \leq m \rangle \]\\
\end{proposition}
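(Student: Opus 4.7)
The plan is to recognize $\T_n \otimes \K^m$ as a complete intersection toric vector bundle in the sense of Section \ref{sec-examples} and then read off the presentation from the general principle that such bundles have their Cox rings presented by homogenizations of a minimal generating set of the linear ideal $L$.

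First I would recall that $\T_n$ itself is a sparse hypersurface toric vector bundle: the diagram is the $(n+1)\times(n+1)$ identity matrix, the matrix $M$ is the $1\times(n+1)$ all-ones row, and the linear ideal is $L = \langle y_0 + y_1 + \cdots + y_n\rangle$. This is the sparse bundle of uniform matroid type $U^{n+1}_n$. Plugging $s = n+1$, $r = n$ into Theorem \ref{thm-uniformsparse} gives $\textup{CI}(M, D) = \lfloor n/1\rfloor - 1 = n - 1$. Hence for every $m \leq n-1$ the tensor product $\T_n \otimes \K^m$ remains a complete intersection toric vector bundle.

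Next I would identify the data defining $\T_n \otimes \K^m$. Tensoring with the trivial bundle $\K^m$ replaces the ambient polynomial ring $\K[y_0,\ldots,y_n]$ by $\K[y_{ij} : 1\leq i\leq m,\ 0\leq j\leq n]$ and produces $m$ independent copies of the linear relation, so the linear ideal becomes
\[
L_m \;=\; \Bigl\langle \sum_{j=0}^{n} y_{ij} \;:\; 1 \leq i \leq m \Bigr\rangle,
\]
and these $m$ relations form a minimal generating set. Because the diagram for $\T_n\otimes \K^m$ consists of $m$ side-by-side copies of the identity diagram for $\T_n$, the $j$-th column of the diagram pairs $y_{ij}$ (for every $i$) with the $j$-th ray of the fan of $\P^n$, i.e.\ with the toric Cox variable $x_j$.

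Finally, since $\T_n\otimes \K^m$ is complete intersection, its Cox ring is presented by homogenizing the minimal generators of $L_m$ using the toric variables $x_0,\ldots,x_n$ as dictated by the diagram. Each $y_{ij}$ lifts to $Y_{ij}$ and carries the homogenizing weight $x_j$, producing the relations $\sum_{j=0}^n x_j Y_{ij}$ for $i = 1,\ldots, m$ and no others. This is exactly the presentation claimed. The main obstacle is the bookkeeping in the previous paragraph: verifying that the tensor-product diagram does decouple the $m$ summands (no cross relations linking different $i$'s appear) and that the homogenization rule of \cite[Section 6]{Kaveh-Manon-tvb} attaches $x_j$ to each $Y_{ij}$ uniformly in $i$, so that the $m$ relations really are just parallel copies of the single relation defining $\RR(\P\T_n)$.
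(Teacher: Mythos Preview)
Your proposal is correct and follows exactly the same approach as the paper: invoke Theorem \ref{thm-uniformsparse} to see that $\T_n\otimes\K^m$ is a complete intersection toric vector bundle for $m\leq n-1$, and then read off the presentation from the definition of complete intersection bundles. The paper's proof is a single sentence (``The tangent bundles $\T_n, \T_n\otimes \K^2, \ldots, \T_n\otimes \K^{n-1}$ are complete intersection by Theorem \ref{thm-uniformsparse}''), so your version simply makes explicit the bookkeeping about the linear ideal and homogenization that the paper leaves to the reader.
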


\begin{proof}
The tangent bundles $\T_n, \T_n\otimes \K^2, \ldots, \T_n\otimes \K^{n-1}$ are complete intersection by Theorem \ref{thm-uniformsparse}. 
\end{proof}

In the grading by $\Pic(\P(\T_n\otimes \K^m)) \cong \Pic(\P^n)\times \Z \cong \Z \times \Z$, $\deg(x_j) = (-1, 0)$, $\deg(Y_{ij}) = (1, 1)$. By Corollary \ref{cor-dominantgen} we should expect the case $m = n$ to require an additional generator in degree $n$.  We use \cite[Theorem 1.5]{Kaveh-Manon-tvb} and a close relationship with a particular \emph{Zelevinsky quiver variety} to compute the presentation of $\RR(\P(\T_n\otimes \K^n))$. See \cite[Chapter 17]{Miller-Sturmfels} for an introduction to Zelevinsky quiver varieties and their rank arrays. The quiver variety we need corresponds to the \emph{rank array} $\br$:\\

\[
\br \ \:=\:\
  \begin{array}{ccc|c}
   2 & 1 & 0 & \\ \hline
       &   & n & 0 \\
       & n+1 & n-1 & 1 \\
     1 & 1 & 0 & 2 
  \end{array}
\]

\noindent
This is the rank array for the quiver:\\

\[
\begin{tikzcd}
\K \arrow[r, "\Phi_1"] & \K^{n+1} \arrow[r, "\Phi_2"] & \K^n,
\end{tikzcd}
\]\\

\noindent
where $rank(\Phi_1) \leq 1$, $rank(\Phi_2) \leq n-1$, and $rank(\Phi_2\Phi_1) \leq 0$.  By \cite[Theorem 17.23]{Miller-Sturmfels}, the ideal:\\

\[I_\br = \langle \sum x_jY_{ij}, \det Y(j) \rangle \subset \K[x_j, Y_{ij} \mid 1 \leq i \leq n, 0 \leq j \leq n]\]\\

\noindent
is prime and Cohen-Macaulay.  Here $Y(j)$ denotes the $n\times n$ minor of the matrix $n \times n+1$ $Y = [Y_{ij}]$ obtained by forgetting the $j$-th column. 

Following \cite[Theorem 1.5]{Kaveh-Manon-tvb}, we start with a potential presentation of $\RR(\P(\T_n\otimes \K^n))$, given as a map $\Phi$ between polynomial rings.  Letting $I = \ker(\Phi)$, in order to show that $\RR(\P(\T_n\otimes \K^n)) = \textup{Im}(\Phi)$ it is necessary and sufficient to show that $\langle I, x_j\rangle$ is a prime ideal for $0 \leq j \leq n$, where: 

$$
\begin{aligned}
\Phi: \K[x_j, Y_{ij}, W] &\to \K[t_j^\pm, y_{ij}] \\
x_j &\to t_j^{-1} \\
W &\to \det[y(0)]t_0\cdots t_n \\
Y_{i0} &\to (-\sum_{j=1}^n y_{ij})t_0 \\
Y_{i1} &\to y_{i1}t_j \\
&\quad \vdots \\
Y_{i n} &\to y_{i n}t_n
\end{aligned}
$$

First we must identify the kernel $I$.

\begin{proposition}\label{prop-zeldef}
The kernel of the map $\Phi$ is the ideal:\\

\[\langle\sum x_jY_{ij}, \det Y(j) - x_jW\mid 1 \leq i \leq n, 0 \leq i \leq n \rangle.\]\\

\end{proposition}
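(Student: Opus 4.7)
The plan is to prove $J \subseteq \ker\Phi$ by direct computation, then establish equality by showing $V(J)$ is an irreducible variety of dimension $n^2+n+1$ and that $J$ itself is prime, using the structure of $\K[x,Y,W]/J$ as a flat deformation of the Zelevinsky quiver ring $\K[x,Y]/I_\br$.

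For the inclusion, the linear relations cancel telescopically: $\Phi(\sum_j x_jY_{ij}) = t_0^{-1}(-\sum_k y_{ik})t_0 + \sum_{j\geq 1} t_j^{-1}y_{ij}t_j = 0$. For each relation $\det Y(j)-x_jW$, applying $\Phi$ to $\det Y(j)$ and performing a column operation to eliminate the $\Phi(Y_{i0})$-column reduces $\Phi(\det Y(j))$ to $\pm\det y(0)\cdot t_0\cdots\widehat{t_j}\cdots t_n = \Phi(x_jW)$, with signs absorbed into the signed-minor convention for $\det Y(j)$.

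To obtain equality, I would first compute $\textup{tr.deg}_\K\textup{Im}(\Phi) = n^2+n+1$: since $\Phi(x_j)=t_j^{-1}$ and $\Phi(x_j)\Phi(Y_{ij})=y_{ij}$ for $j\geq 1$, the fraction field of $\textup{Im}(\Phi)$ is $\K(t_0,\ldots,t_n,y_{ij})$, so $V(\ker\Phi)$ is irreducible of dimension $n^2+n+1$. Next, I would analyze $V(J)$ via the projection $\pi\colon V(J)\to\A^1_W$. Over $W=c\neq 0$, the relations $\det Y(j)=(-1)^j c\,x_j$ solve uniquely for each $x_j$ in terms of $Y$, and the linear relations $\sum_j x_jY_{ij}=0$ then reduce to the Laplace identity $\sum_j (-1)^j\det Y(j)\cdot Y_{ij}=0$ (the determinant of the $(n+1)\times (n+1)$ matrix obtained by duplicating row $i$ of $Y$), which holds automatically; hence the fiber is $\A^{n(n+1)}$. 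The special fiber $\pi^{-1}(0)=V(I_\br)$ is the Zelevinsky quiver variety, irreducible and Cohen-Macaulay of dimension $n^2+n$ by \cite[Theorem~17.23]{Miller-Sturmfels}. Constant fiber dimension gives $\dim V(J)=n^2+n+1$, and irreducibility of $V(J)$ follows by showing $V(I_\br)\subset\overline{V(J)\cap\{W\neq 0\}}$ via first-order deformations $Y_\epsilon=Y+\epsilon Y'$ connecting a point $(x,Y,0)$ on the special fiber to the generic locus, with $Y'$ chosen so that $((-1)^j\partial_\epsilon\det Y_\epsilon(j)|_{\epsilon=0})_j=x$.

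The main obstacle is upgrading $\sqrt{J}=\ker\Phi$ to $J=\ker\Phi$, requiring $J$ to be radical. The cleanest route is to show $W$ is a non-zero-divisor modulo $J$: this, combined with the explicit isomorphism $(\K[x,Y,W]/J)[W^{-1}]\cong\K[Y,W^{\pm}]$ (obtained by eliminating $x_j=(-1)^j\det Y(j)/W$, at which point the linear relations become the Laplace identity), yields that $\ker\Phi/J$ is both $W$-torsion and contained in a ring where $W$ is a non-zero-divisor, hence zero. The regularity of $W$ modulo $J$ follows from the Cohen-Macaulay property of $I_\br$: since $J+\langle W\rangle=I_\br$ has codimension $n+1$ in $\K[x,Y]$, matching the expected codimension of $J$ in $\K[x,Y,W]$, any embedded prime of $J$ containing $W$ would force $\dim V(J\cap\{W=0\})=\dim V(J)$, contradicting the constant-fiber-dimension analysis.
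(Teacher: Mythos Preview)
Your overall strategy is sound and the geometric picture you describe --- fibering $V(J)$ over $\A^1_W$ with generic fiber $\A^{n(n+1)}$ and special fiber the quiver variety --- is correct and is essentially the same degeneration the paper exploits. The inclusion $J \subseteq \ker\Phi$, the transcendence-degree count, and the fiber computations are all fine.

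The gap is in your argument that $W$ is a non-zero-divisor modulo $J$. Your claim that ``any embedded prime of $J$ containing $W$ would force $\dim V(J\cap\{W=0\})=\dim V(J)$'' is false: an embedded prime, by definition, strictly contains some minimal prime and therefore has \emph{strictly smaller} dimension, so its presence is invisible to the dimension of $V(J+\langle W\rangle)$. Your fiber-dimension analysis correctly rules out \emph{minimal} primes of $J$ containing $W$, but says nothing about embedded ones. You invoke the Cohen--Macaulay property of $I_{\br}$, but that is a property of $\K[x,Y]/I_{\br}$, not of $\K[x,Y,W]/J$; it cannot be transferred to $J$ without first knowing that $J$ degenerates flatly to $I_{\br}$ --- i.e.\ that $\In_\delta(J) = I_{\br}$ rather than merely $I_{\br}\subseteq \In_\delta(J)$ --- and this is precisely the point at issue. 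Without ruling out embedded primes, you cannot conclude that $J$ is radical, and the rest of the argument collapses.

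The paper's proof is more economical. It introduces the partial weight order $\delta$ sending $W\mapsto 1$ and the other variables to $0$, obtains the chain $I_{\br}\subseteq \In_\delta(J)\subseteq \In_\delta(I)$, and uses primality of $I_{\br}$ together with the equality of dimensions $\dim V(I_{\br})=\dim V(\In_\delta I)=n^2+n+1$ to conclude that all three ideals coincide; then $\In_\delta(J)=\In_\delta(I)$ with $J\subseteq I$ forces $J=I$. This bypasses both your deformation sketch for irreducibility and the non-zero-divisor step entirely. To repair your route you would need $\K[x,Y,W]/J$ to be Cohen--Macaulay, after which the dimension drop really does give regularity of $W$; but the most direct way to establish that is to identify $\In_\delta(J)$ with $I_{\br}$, which is exactly the paper's argument.
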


\begin{proof}
For now let $J = \langle\sum x_jY_{ij}, \det Y(j) - x_jW\mid 1 \leq i \leq n, 0 \leq i \leq n \rangle$.  It is straightforward to check that $J \subseteq I$.  We define a partial term order $\delta$ by weighting the variables $x_j \to 0$, $Y_{ij} \to 0$, $W \to 1$.  We have:\\

\[I_\br \subseteq \In_\delta(J) \subseteq \In_\delta(I)\]

\noindent
The $0$-locus of $\In_\delta(I)$ has dimension equal to that of the $0$-locus of $I$, which is $n^2 + n + 1$.  This is the same as the dimension of the quiver variety defined by $I_\br$.  It follows that $I_\br = \In_\delta(J) = \In_\delta(I)$, and $I = J$. 
\end{proof}

Next we must show that $\langle I, x_j \rangle$ is always prime.  The fact that $\In(I) = I_\br$ implies that $I$ is a Cohen-Macaulay ideal and that $\langle I, x_i\rangle$ is also Cohen-Macaulay.  It is straightforward to show that $\langle I, x_j\rangle$ is generically reduced.  This implies that $\langle I, x_j\rangle$ is reduced and unmixed.  We show that the corresponding variety is irreducible by arguing that it has one top-dimensional component.  We require the following lemma. 

\begin{lemma}\label{lem-gbasis+dim}
Let $\emptyset \neq S \subseteq [n]$, and let $J_S = \langle \sum_{i \in S} Y_{ji}, \det Y(j) \mid 0 \leq j \leq n \rangle$ be the ideal of the variety $F_S$.  The given generating set of $J_S$ is a Gr\"obner basis, and $\dim(F_S) = n^2 - 1$. 
\end{lemma}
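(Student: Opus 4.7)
The plan is to recognize that the determinantal generators are redundant modulo the linear ones, collapsing $F_S$ to a linear variety whose dimension is then immediate. First I would observe that any $Y$ in the common vanishing locus of the linear relations $\ell_j := \sum_{i \in S} Y_{ij} = 0$, $0 \leq j \leq n$, satisfies $\sum_{i \in S}(\text{row } i \text{ of } Y) = 0$ and therefore has rank at most $n-1$, so every maximal minor $\det Y(j)$ vanishes identically on $V(L_S)$, where $L_S := \langle \ell_0, \ldots, \ell_n \rangle$. Because the $\ell_j$ involve pairwise disjoint variables (those in column $j$), they are $\K$-linearly independent, so $\K[Y_{ij}]/L_S$ is a polynomial ring in $n(n+1) - (n+1) = n^2 - 1$ indeterminates; in particular $L_S$ is prime, hence radical, forcing $\det Y(j) \in L_S$. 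This gives $J_S = L_S$, and the dimension claim $\dim F_S = n^2 - 1$ follows from the codimension count.

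For the Gr\"obner basis claim I would fix a term order in which the leading term of each $\ell_j$ is a single variable $Y_{i_0, j}$ for some fixed $i_0 \in S$. The $\ell_j$ then have pairwise distinct leading monomials supported in disjoint variables, so Buchberger's criterion is trivially satisfied and the $\ell_j$ form a Gr\"obner basis of $L_S$. Since $\det Y(j) \in L_S$, each determinantal generator reduces to zero against this basis, so augmenting the basis by the $\det Y(j)$ preserves $\In(J_S) = \In(L_S)$ and therefore the Gr\"obner basis property.

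The only real conceptual step is showing $\det Y(j) \in L_S$ rather than merely in its radical; I expect this to be the main obstacle, and the cleanest route is the prime/radical argument above. If a constructive witness is preferred, substituting $Y_{i_0, k} = -\sum_{i \in S \setminus \{i_0\}} Y_{ik}$ into the minor $Y(j)$ for each $k \neq j$ produces a matrix whose $i_0$-th row is visibly a linear combination of the other rows indexed by $S \setminus \{i_0\}$, yielding an explicit identity $\det Y(j) = \sum_{k \neq j} c_{jk} \ell_k$ with coefficients $c_{jk}$ that are themselves $(n-1) \times (n-1)$ minors of the remaining submatrix.
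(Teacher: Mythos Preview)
Your argument rests on a misreading of the indexing in the linear generators. In the paper's setup $Y=[Y_{ij}]$ is $n\times(n+1)$ with row index $1\le i\le n$ and column index $0\le j\le n$, and the linear forms are
\[
f_j \;=\; \sum_{i\in S} Y_{j i}\qquad (1\le j\le n),
\]
one for each \emph{row}, summing the entries in the columns indexed by $S\subseteq[n]$. (The range $0\le j\le n$ in the lemma statement applies to the minors $\det Y(j)$, not to the linear forms; the proof in the paper makes this explicit.) You instead took $n+1$ forms, one per column, summing over rows in $S$.

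Under the correct reading the relations $f_1=\cdots=f_n=0$ say that the \emph{columns} indexed by $S$ sum to zero. This forces $\det Y(j)=0$ only when the deleted column $j$ lies outside $S$ (so that all of $S$'s columns survive in the minor); for $j\in S$ the minor $\det Y(j)$ is \emph{not} in the linear ideal. A small example makes this concrete: for $n=2$, $S=\{1\}$, the linear forms are $Y_{11}, Y_{21}$, and modulo these $\det Y(1)=Y_{10}Y_{22}-Y_{12}Y_{20}$ is nonzero. Consequently $J_S\neq L_S$: the linear ideal $L_S$ has exactly $n$ independent generators, so $\dim V(L_S)=n(n+1)-n=n^2$, whereas $\dim F_S=n^2-1$. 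Your codimension count $n(n+1)-(n+1)=n^2-1$ landed on the right number only because the misreading introduced one extra linear form.

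So the collapsing strategy fails; the determinantal generators genuinely cut the dimension down by one, and one must do real work on the S-pairs between the $f_j$ and the minors. The paper carries this out directly with Buchberger's criterion under a suitable lex-like order, showing the relevant S-pairs reduce to zero (the reduction ultimately uses that a cofactor expansion produces a determinant with a repeated-column dependence), and then reads off the dimension from the resulting initial ideal.
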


\begin{proof}

Consider the matrix of variables\\

$$
\begin{bmatrix}
Y_{10} & Y_{11} & \hdots & Y_{1n} \\
Y_{20} & Y_{21} & \hdots & Y_{2n} \\
Y_{30} & Y_{31} & \hdots & Y_{3n} \\
\vdots & \vdots & \ddots & \vdots \\
Y_{n0} & Y_{n1} & \hdots & Y_{nn}
\end{bmatrix}.
$$\\

\noindent
under the term order:\\ 

$$
Y_{10} \prec Y_{11} \prec \hdots \prec Y_{1n} \prec Y_{20} \prec Y_{21} \prec \hdots \prec Y_{2n} \prec Y_{30} \prec Y_{31} \prec \hdots \prec Y_{nn},
$$\\

\noindent
where the $Y_{j0}$ element is the smallest in the row, but the ordering completes the row from left to right before moving on to the next row. Let $|S| = k$.  Without loss of generality we can have these forms appear together and at the beginning of the matrix ( starting in column two since we will consider deleting the first column). Therefore, we wish to verify that the generators $f_j = \sum_{i=1}^k Y_{ji}$, $1 \leq j \leq n$ and $\det Y(j)$ form a Gr\"obner basis.

We verify that the generators are a Gr\"obner basis by computing the S-pairs. The $f_j$ and the determinants each independently form their own respective Gr\"obner bases.  In particular, the $f_j$ are linear, and the minors are the usual generating set of the ideal of a determinantal variety. It remains to show that the the S-pair of one of the $f_j$ and one $\det Y(j)$ reduces to zero.

Without loss of generality, we consider $\det Y(0)$. It is straightforward to show that the lead term $\text{LT}(\det Y(0)) = Y_{11}Y_{22}...Y_{nn}$. Observe that $\text{LT}(\det Y(0))$ is disjoint from the lead term of every $f_j$ except for $f_1$.  Therefore, we only need to consider $S(f_1, \det Y(0))$:\\

$$
\begin{aligned}
S_1 = S(f_1, \det Y(0)) &= Y_{22}Y_{33}...Y_{nn}(Y_{11}+Y_{12} + ... + Y_{1k}) - \det Y(0) \\
&= Y_{22}Y_{33}...Y_{nn}(Y_{12} + ... + Y_{1k}) - (\det Y(0) - Y_{11}Y_{22}...Y_{nn}).
\end{aligned}
$$\\

There are $(n-1)!$ terms of the determinant with the coefficient $Y_{11}$, so the $\text{LT}(S_1)$ will also be one of these terms. In each of these determinant minors, the $Y_{jj}$ term will lead, so $\text{LT}(S_1) = -Y_{11}Y_{22}Y_{33}...Y_{n n-1}Y_{n-1 n}$. Notice $\text{LT}(f_1) | \text{LT}(S_1)$, so we have:\\

$$
\begin{aligned}
S_2 &= S_1 - (-Y_{22}...Y_{n n-1}Y_{n-1 n})(f_1) \\
&= Y_{22}Y_{33}...Y_{nn}(Y_{12} + ... + Y_{1k}) - (\det Y(0) + Y_{11}Y_{22}...Y_{nn}) \\ &\quad -(-Y_{22}...Y_{n n-1}Y_{n-1 n})(Y_{11} + Y_{12} + ... + Y_{1k}) \\ 
&= (Y_{22}Y_{33}...Y_{nn} - Y_{22}...Y_{n n-1}Y_{n-1 n})(Y_{12} + ... + Y_{1k}) - \det Y(0) \\ &\quad - Y_{11}Y_{22}...Y_{nn} + Y_{11}Y_{22}...Y_{n n-1}Y_{n-1 n}).
\end{aligned}
$$\\

\noindent
We continue in this way until we have accounted for all of the terms of the determinant that contain $Y_{11}$, giving:\\

$$
\begin{aligned}
S_{(n-1)!} &= (Y_{12}+Y_{13} + ... + Y_{1k})(\det Y(0)_{1,1})) - (\det Y(0)) - Y_{11}(\det Y(0)_{1,1}))) \\
&= (f_1 - Y_{11})(\det Y(0)_{1,1}) - (\det Y(0) - Y_{11}(\det Y(0)_{1,1})),
\end{aligned}
$$\\

\noindent
where $Y(0)_{1,1}$ is the minor of $Y(0)$ achieved from deleting the first row and first column. Then, $S_{(n-1)!}$ has no remaining terms that contain $Y_{11}$, so we move on to the next lowest term: $Y_{12}$. Notice that, per the term order, the next leading terms of the determinant will contain $Y_{12}Y_{21}$ since $Y_{21}$ is the smallest term in the second row. In fact, the lead terms will appear in the same order as when we considered terms containing $Y_{11}$, just with the opposite sign and $Y_{22}$ term replaced with $Y_{21}$. That is to stay, $\text{LT}(S_{(n-1)!}) = -Y_{12}Y_{21}Y_{33}...Y_{nn}$, which is divisible by $\text{LT}(f_2) = Y_{21}$. This gives:\\

$$
\begin{aligned}
S_{((n-1)!+1)} &= S_{(n-1)!} - (-Y_{12}Y_{33}...Y_{nn})(f_2) \\
&= (f_1 - Y_{11})(\det Y(0)_{1,1}) - [\det Y(0) - Y_{11}(\det Y(0)_{1,1})] \\ &\quad -(-Y_{12}Y_{33}...Y_{nn})(Y_{21}+...+Y_{2k}) \\
&= (f_1 - Y_{11})(\det Y(0)_{1,1}) + (Y_{22}+...+Y_{2k})(Y_{12}Y_{33}...Y_{nk}) \\
&\quad -(\det Y(0) - Y_{11}(\det Y(0)_{1,1}) - Y_{12}Y_{21}Y_{33}...Y_{nn}).
\end{aligned}
$$\\

\noindent
Continuing in this way, we'll get:\\

$$
\begin{aligned}
S_{2(n-1)!} &= (f_1 - Y_{11})(\det Y(0)_{1,1}) - (f_2 - Y_{21})(\det Y(0)_{1,2}) \\
&\quad -((\det Y(0) - Y_{11}(\det Y(0)_{1,1}) + Y_{12}(\det Y(0)_{1,2})).
\end{aligned}
$$\\

\noindent
At the end of each $k(n-1)!$ steps, we are pulling the cofactor $Y_{1k}(\det Y(0)_{1,k}$ off $\det Y(0)$ and adding $(f_{k} - \text{LT}(f_{k}))(\det Y(0)_{1,k}$. It follows that:\\

$$
S_{n!} = (f_1-Y_{11})(\det Y(0)_{1,1})-(f_2-Y_{21})(\det Y(0)_{2,1})+...+(-1)^{n+1} (f_n-Y_{n n-1})(\det Y(0)_{n n-1}).
$$\\

\noindent
In particular:\\

$$
S_{n!} = \det \left( \begin{bmatrix} 
(f_1 - Y_{11}) & Y_{12} & \hdots & Y_{1n} \\
\vdots & \vdots & \ddots & \vdots \\
(f_n-Y_{n1}) & Y_{n2} & \hdots & Y_{nn}
\end{bmatrix} \right) = 0.
$$\\

\noindent
as the first column is the sum of the $k-1$ columns used to create $f_j$, creating a linear dependence. We conclude that the collection of generators forms a Gr\"obner basis.

We now compute the dimension of $F_S$.  Consider the collection $\{Y_{01}...Y_{nn}\}$ of $n(n+1)$ variables. We wish to determine the dimension of the initial ideal, $\In(J_S)$, by determining the degree of the largest monomial, $M$, not divisible by any generator of $\In(J_S)$. Notice that, from the $f_j$, $\In(J_S)$ contains $n$ degree  $1$ lead terms, $\{Y_{11},...,Y_{1n}\}$, none of which can appear in $M$. However, the product of the remaining $n^2$ variables is still divisible by the lead terms of the determinants. In order to remove these, we consider how many variables could possible end up in the top left corner of the of the $n \times n$ minor, $Y$. These terms come from the first two columns of our general matrix. However, the entire second column has been removed from consideration by the lead terms of the linear forms. Therefore, we consider only the first column. With $n$ rows, there is $1$ entry from the first column that could appear in the top left position, all of which appear in the subsequent lead term of the corresponding determinant. Therefore, these terms also cannot appear in $M$. This is all the terms that need to be removed since the lead term of all the determinants is the product of its diagonal entries, all of which contain an entry from the first two columns of the general $n \times (n+1)$ matrix. Therefore,\\

$$
\dim(F_S) = \deg(M) = n(n+1) - (n) - 1 = n^2 -1.
$$\\
\end{proof}

\begin{proposition}
The ideal $\langle I, x_j \rangle$ is prime. 
\end{proposition}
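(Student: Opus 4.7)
The plan is to show $\langle I, x_0 \rangle$ is Cohen-Macaulay and has a unique top-dimensional irreducible component; the other cases $j \neq 0$ are identical by symmetry. Proposition \ref{prop-zeldef} established $\In_\delta(I) = I_\br$, and $I_\br$ is both Cohen-Macaulay and prime by \cite[Theorem 17.23]{Miller-Sturmfels}. Both properties descend to $I$ itself by standard results on initial-ideal degenerations (primeness because $R/I$ has $R/\In_\delta(I)$ as its associated graded with respect to a separated filtration, and a filtered algebra with domain associated graded is itself a domain). Consequently $I$ is Cohen-Macaulay and prime of dimension $n^2 + n + 1$. Since $x_0 \notin I$ (no generator of $I_\br$ has $x_0$ as its leading term), it is a non-zero-divisor on $R/I$, so $\langle I, x_0 \rangle$ is Cohen-Macaulay of dimension $n^2 + n$. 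In particular $\langle I, x_0 \rangle$ is unmixed and every associated prime is minimal of this dimension.

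Next I would localize at $W$ to identify the top-dimensional component. On the open locus $\{W \neq 0\}$, the generators $\det Y(j) - x_j W$ for $j \geq 1$ allow one to eliminate the variables $x_1, \ldots, x_n$ in terms of $Y$ and $W$, while $x_0 = 0$ forces $\det Y(0) = 0$. Applying the Laplace-type identity $\sum_{j=0}^n (-1)^j Y_{ij} \det Y(j) = 0$, which vanishes because it expresses the determinant of a matrix with a repeated row, the remaining linear relations $\sum_j x_j Y_{ij} = 0$ become consequences of the equations already imposed (after the implicit sign normalization fixed by the definition of $\Phi$). Thus the localization $(R/\langle I, x_0 \rangle)[W^{-1}]$ is isomorphic to $\K[Y_{ij}, W^{\pm 1}]/\langle \det Y(0) \rangle$, which is a domain since $\det Y(0)$ is an irreducible polynomial. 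Hence $\langle I, x_0 \rangle : W^\infty$ is prime.

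To conclude that $\langle I, x_0 \rangle$ itself is prime, I must show that $W$ does not lie in the unique minimal prime of $\langle I, x_0 \rangle$, equivalently that $V(\langle I, x_0 \rangle) \cap \{W = 0\}$ has dimension strictly less than $n^2 + n$. On this closed locus the defining ideal simplifies to $\langle \sum_{j \geq 1} x_j Y_{ij},\ \det Y(j) \mid 0 \leq j \leq n \rangle$. Stratifying by $x = (x_1, \ldots, x_n)$, on the open stratum where all $x_i \neq 0$ a linear change of column coordinates sends the linear forms to $\sum_{j=1}^n Y_{ij}$ and preserves the $n \times n$ minors, so the fiber is identified with the variety $F_{[n]}$ of Lemma \ref{lem-gbasis+dim}, of dimension $n^2 - 1$. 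Adding the $n$-dimensional $x$-base gives total dimension $n^2 + n - 1$, and the lower $x$-strata contribute components of even smaller dimension. Therefore $W$ is a non-zero-divisor on $R/\langle I, x_0 \rangle$, giving $\langle I, x_0 \rangle = \langle I, x_0 \rangle : W^\infty$, which has already been shown to be prime.

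The main obstacle is the dimension analysis on $\{W = 0\}$: one must verify that the reparametrization on the open $x$-stratum genuinely puts the fiber in the form of Lemma \ref{lem-gbasis+dim}, and that the deeper strata (where some $x_i$ vanish) do not accidentally produce components of dimension $n^2 + n$. Both are routine once the coordinate change is made carefully, and Lemma \ref{lem-gbasis+dim} supplies the crucial Gr\"obner-basis-based dimension estimate. The remainder of the argument is standard commutative algebra of Cohen-Macaulay rings and localization.
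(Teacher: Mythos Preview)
Your argument is correct and runs parallel to the paper's, sharing the same two load-bearing ingredients: the Cohen--Macaulay property inherited from the degeneration $\In_\delta(I)=I_{\br}$, and the dimension count of Lemma~\ref{lem-gbasis+dim} on the $x$-stratification. Where you diverge is in how you isolate the unique top component. The paper passes to the initial ideal, observing that $\langle I_{\br},x_0\rangle \subseteq \In_\delta\langle I,x_0\rangle$ with matching dimensions, and then stratifies the variety of $\langle I_{\br},x_0\rangle$ over $\A^n_x$; this is the ``$W\to 0$'' picture. You instead invert $W$ and identify $(R/\langle I,x_0\rangle)[W^{-1}]$ explicitly with $\K[Y_{ij},W^{\pm 1}]/\langle \det Y(0)\rangle$ via elimination of $x_1,\dots,x_n$ and the Laplace identity, obtaining primeness of the saturation directly; the stratification then enters only to bound the dimension of the $\{W=0\}$ locus, which coincides set-theoretically with the paper's $\langle I_{\br},x_0\rangle$ analysis. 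Your packaging has the mild advantage that it never needs the separate ``generically reduced'' check the paper invokes before the proposition: once $W$ is shown to avoid all minimal primes of the unmixed Cohen--Macaulay ideal, $\langle I,x_0\rangle=\langle I,x_0\rangle:W^\infty$ is prime in one stroke. The sign bookkeeping you flag is real but harmless, and your handling of the lower $x$-strata (including $S=\emptyset$, where the fiber is the rank-$(n-1)$ determinantal variety of dimension $n^2+n-2$) goes through exactly as in the paper.
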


\begin{proof}
It suffices to treat the case $j = 0$. The ideal $\langle I, x_0\rangle$ is generated by $\sum_{j =1}^n x_jY_{ij}$ for $1 \leq i \leq n$, the minor $\det Y(0)$, and $\det Y(j) - x_jW$ for $1 \leq j \leq n$. The initial ideal $\In_\delta\langle I, x_j \rangle$ contains $\langle I_\br, x_j\rangle$, and the dimensions of the varieties of these ideals coincide, so it suffices to check if $\langle I_\br, x_j\rangle$ is prime.  Let $V$ be the variety of this ideal, then there is a $T^n$ equivariant extension $f^*:\K[x_1, \ldots, x_n] \to \K[V]$. Let $\O(S) \subseteq \A^n_\K$ be the orbit of $T^n$ where $x_j \neq 0$ for $j \in S$, then the fibers of the map $f:V \to \A^n_\K$ over any $\O(S)$ are all isomorphic. We use this to compute the dimension of $f^{-1}(\O(S)) = V_S$.

Starting with the smallest case $S = \emptyset$, we set all $x_j = 0$. The fiber is then the determinantal variety cut out by $\langle \det Y(j) \mid 0 \leq j \neq n \rangle$. This variety has dimension $(n-1)((n + 1) + n - (n-1)) = n^2 + n - 2$. If $S \neq \emptyset$, we can consider the fiber $F_S$ over the point $p_S$, where $X_j(p_S) = 1$ $j \in S$, $X_j(p_S) = 0$ $j \notin S$. We have $\dim(V_S) = \dim(F_S) + |S|$. The variety $F_S$ is cut out by the ideal $\langle \sum_{j \in S} Y_{ij},\det(Y(j) \rangle$. By Lemma \ref{lem-gbasis+dim}, $\dim(F_S)$ is $n^2 -1$. As a consequence, $\dim(V_S) = n^2-1 +|S| < n^2 + n - 1 = \dim(V_{[n]})$.    

We conclude that $V_{[n]}$ has strictly higher dimension than all other $V_S$, so the closure $\overline{V}_{[n]} \subseteq V$ is a top dimensional component. But the complement of this closure must be composed of constructible sets of strictly smaller dimension. It follows that $V = \overline{V}_{[n]}$, and that $V$ is reduced and irreducible. 
\end{proof}

Now by \cite[Theorem 1.5]{Kaveh-Manon-tvb} we have:\\

\[\RR(\P(\T_n\otimes \K^n)) = \K[x_j, Y_{ij}, W]/\langle \sum x_jY_{ij}, \det Y(j) - x_jW \rangle.\]\\

\begin{remark}
In principle the multigraded Hilbert series of $\RR(\P(\T_n\otimes \K^n))$ should be expressible in terms of the $K$-polynomial of quiver variety determined by the rank array $\br$. This involves the so-called Grothendieck polynomials, see \cite{KMS}. 
\end{remark}

\noindent
By Theorem \ref{thm-main-stability}, $\RR(\P(\T_n \otimes \K^m))$ for $m \geq n$ is the image of the map:\\
$$
\begin{aligned}
\Phi_m: \K[x_j, Y_{ij}, W_\tau] &\to \K[t_j^\pm, y_{ij}]\\
x_j &\to t_j^{-1} \\
W_\tau &\to \det[y(0, \tau)]t_0\cdots t_n \\
Y_{i0} &\to (-\sum_{j=1}^n y_{ij})t_0 \\
Y_{i1} &\to y_{i1}t_j \\
&\quad \vdots \\
Y_{i n} &\to y_{i n}t_n
\end{aligned}
$$ \\

\noindent
where $1 \leq i \leq m$, $0 \leq j \leq n$, and $\tau \in \bigwedge^m[n] = \{S \subset [n] \mid |S| = n\}$.  In the grading by $\Pic(\P(\T_n\otimes \K^m)) \cong \Z \times \Z$, the addition generators $W_\tau$ all have degree $(n+1, n)$.  We can rephrase this by saying that there is a surjection of twisted commutative algebras:\\

\[\Phi_V:\Sym \left (\K^{n+1} \oplus (\K^{n+1}\otimes V) \oplus \bigwedge^n V \right ) \to \RR(\P(\T_n\otimes V)).\]\\

\begin{question}
Find a description of the functor $V \to \ker(\Phi_V)$.
\end{question}

\subsection{The full flag bundle $\FL\T_n$}

By Theorem \ref{thm-main}, the Cox ring $\RR(\FL\T_n)$ is the algebra of invariants $\RR(\P(\T_n\otimes \K^{n-1}))^{U_{n-1}}$, where $U_{n-1}$ is the group of $n-1 \times n-1$ lower-triangular matrices. The action of $U_{n-1}$ on $\RR(\P(\T_n\otimes \K^{n-1}))$ extends to an action on the presenting polynomial ring $\K[x_j, Y_{ij} \mid 0 \leq j \leq n, 1 \leq i \leq n-1]$, so we obtain a presentation by invariants:\\

\[\K[x_j, Y_{ij} \mid 0 \leq j \leq n, 1 \leq i \leq n-1]^{U_{n-1}} \to \RR(\FL\T_n) \to 0.\]\\

The algebra $\K[x_j, Y_{ij} \mid 0 \leq j \leq n, 1 \leq i \leq n-1]^{U_{n-1}} \subset \K[x_j, Y_{ij} \mid 0 \leq j \leq n, 1 \leq i \leq n-1]$ is a polynomial ring in $n+1$ variables over the Pl\"ucker algebra of minors of the matrix $[Y_{ij}]$.  We present $\RR(\FL\T_n)$ as a quotient of the polynomial ring $\K[x_j, P_{\tau}, P_{0, \tau} \mid 0 \leq j \leq n, \tau \subset [n]]$. We make use of the realization of $\RR(\P(\T_n\otimes \K^{n-1}))$ as a subalgebra of $\K[t_j, y_{ij}]$, to get that $\RR(\FL\T_n)$ is the image of the polynomial map $\Psi: \K[x_j, P_{\tau}, P_{0, \tau} \mid 0 \leq j \leq n, \tau \subset [n]] \to \K[t_j, y_{ij}]$, where:\\

\[\Psi(x_j) = t_j^{-1},\]\\
\[\Psi(P_\tau) = \det[y(\tau)]t^\tau,\]\\
\[\Psi(P_{0,\tau}) = \sum_{j =1}^n \det[y(j,\tau)]t_0t^\tau.\]\\

\noindent
Here $y(\tau)$ denotes the minor on the first $|\tau|$ rows and the $\tau$ columns of $[y_{ij}]$. The map $\Psi$ factors through the quotient map from $\K[x_j, Y_{ij} \mid 0 \leq j \leq n, 1 \leq i \leq n-1]^{U_{n-1}}$, so the usual quadratic Pl\"ucker relations hold among the $P_\tau$ and $P_{0,\tau}$. Additionally, we have the relations $\sum_{j \notin \tau} x_jP_{j \tau} =0$, which are consequences of the defining relations of $\RR(\P(\T_n\otimes \K^{n-1}))$.  

\begin{theorem}\label{thm-FLT_n}
The ideal $\ker(\Psi)$ presenting $\RR(\FL\T_n)$ is generated by the Pl\"ucker relations among the $P_\tau$ and $P_{0,\tau}$, along with the quadratics $\sum_{j \notin \tau} x_jP_{j \tau} =0$.  
\end{theorem}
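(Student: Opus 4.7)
The plan is to establish $\ker(\Psi) = I_0$ by verifying both inclusions, where $I_0$ denotes the ideal generated by the Pl\"ucker relations among the $P_\tau, P_{0,\tau}$ together with the linear-in-$x$ quadratics $\sum_{j\notin\tau} x_j P_{j\tau}$.

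The inclusion $I_0 \subseteq \ker(\Psi)$ follows by direct substitution. The Pl\"ucker relations hold because, after the substitution $y_{i0} = -\sum_{j\geq 1}y_{ij}$ already built into $\Psi$, every $P_\sigma$ for $\sigma \subset \{0,1,\ldots,n\}$ is realized up to sign as a minor on a fixed set of $|\sigma|$ rows and the $\sigma$ columns of the single matrix $[y_{ij}]$, so classical Pl\"ucker relations apply. For the linear relations, one computes $\Psi(\sum_{j\notin\tau} x_j P_{j\tau}) = t^\tau \sum_{j=0}^n \det y(j,\tau)$ (the terms with $j\in\tau$ drop out by column repetition), and this sum vanishes by multilinearity along column $0$ together with the substitution $y_{i0} = -\sum_{j\geq 1}y_{ij}$.

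For the reverse inclusion I would combine Theorem \ref{thm-coxflagvector}, which identifies $\RR(\FL\T_n)$ with $\RR(\P(\T_n\otimes \K^{n-1}))^{U_{n-1}}$, with the complete intersection presentation $\RR(\P(\T_n\otimes \K^{n-1})) \cong \K[x_j, Y_{ij}]/\langle L_i\rangle$ where $L_i = \sum_j x_j Y_{ij}$. The $L_i$ span a copy of the standard $\GL_{n-1}$-representation, so $\langle L_i\rangle$ is $\GL_{n-1}$-stable; since $\GL_{n-1}$ is reductive, decomposing into isotypical components and matching multiplicity spaces gives
\[
\RR(\FL\T_n) \;\cong\; \K[x_j, Y_{ij}]^{U_{n-1}}\big/\bigl(\langle L_i\rangle \cap \K[x_j, Y_{ij}]^{U_{n-1}}\bigr).
\]
The first fundamental theorem of classical invariant theory identifies $\K[x_j, Y_{ij}]^{U_{n-1}}$ with $\K[x_j, P_\sigma]$ modulo the Pl\"ucker relations. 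The theorem therefore reduces to showing that the intersection ideal $\langle L_i\rangle \cap \K[x_j, Y_{ij}]^{U_{n-1}}$ is generated, modulo Pl\"ucker, by the claimed relations $\sum_{j\notin\tau}x_j P_{j\tau}$.

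This last identification is the main obstacle. I would attempt it via a Pieri/Laplace argument: write $\langle L_i\rangle = R \cdot W$ with $W$ the standard $\GL_{n-1}$-representation spanned by the $L_i$, apply Pieri's rule to enumerate the $\GL_{n-1}$-isotypical summands of $R \otimes W$, and compute each $U_{n-1}$-highest weight vector via a Laplace cofactor expansion; after Pl\"ucker straightening, these collapse to the claimed linear relations. As a cross-check or alternative, in the spirit of Proposition \ref{prop-zeldef} and Lemma \ref{lem-gbasis+dim}, one could introduce a compatible weight degeneration on $\K[x_j, P_\sigma]$, verify that the Pl\"ucker and linear relations form a Gr\"obner basis, and show that the resulting initial ideal defines an irreducible scheme of the expected dimension (namely $\dim \FL\T_n$ plus the Picard rank of $\FL\T_n$), then invoke Cohen--Macaulayness to conclude. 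The technical heart is the bookkeeping required to match Pl\"ucker straightening against the single row-linear dependency $\sum_j x_j Y_{ij}=0$, for which the Gel'fand--Zetlin style degeneration alluded to in the abstract would likely be the cleanest organizing principle.
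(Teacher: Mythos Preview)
Your reduction is sound: taking $U_{n-1}$-invariants of the short exact sequence $0 \to \langle L_i\rangle \to \K[x_j,Y_{ij}] \to \RR(\P(\T_n\otimes\K^{n-1})) \to 0$ is exact because the $\GL_{n-1}$-isotypical decomposition splits it into one-dimensional highest-weight pieces, and the first fundamental theorem does identify $\K[x_j,Y_{ij}]^{U_{n-1}}$ with the polynomial ring in the $x_j$ over the Pl\"ucker algebra of the $(n-1)\times(n+1)$ matrix $Y$. (A minor point: the identification $\RR(\FL\T_n)\cong\RR(\P(\T_n\otimes\K^{n-1}))^{U_{n-1}}$ is the $\ell=r-1$ case of the argument for Theorem~\ref{thm-main}, not Theorem~\ref{thm-coxflagvector}, which concerns $\E\otimes E$ and produces an extra polynomial generator $t$.)

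However, the step you flag as ``the main obstacle'' --- showing that $\langle L_i\rangle \cap \K[x_j,Y_{ij}]^{U_{n-1}}$ is generated over the Pl\"ucker algebra by the $\sum_{j\notin\tau}x_jP_{j\tau}$ --- is genuinely the entire content of the theorem, and you do not carry it out. The Pieri/Laplace sketch is plausible but would require real work: one must control the image of $\bigoplus_\lambda M_\lambda\otimes(V_{\omega_1}\otimes V_\lambda)^{U}$ under the multiplication map $L\otimes R\to L\cdot R$, not just the source, and then straighten. Your second alternative, a compatible degeneration in the Gel'fand--Zetlin style, is exactly what the paper does, but organized differently.

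The paper bypasses the intersection-ideal computation entirely and runs a subduction argument directly on the target ring $\K[t_j^{\pm},y_{ij}]$. A diagonal monomial order on the $y_{ij}$ (with $y\prec t$) assigns to each generator $x_j,P_\tau,P_{0,\tau}$ an initial form, which is then recorded as an element of an ``extended'' Gel'fand--Zetlin semigroup $GZ_n\times\Z^{n+1}$: roughly $x_j\mapsto(0,-\be_j)$, $P_\tau\mapsto(g(\tau),\be_\tau)$, and $P_{0,\tau}\mapsto(g(\tau^*),\be_0+\be_\tau)$ where $\tau^*$ adjoins to $\tau$ the first missing index. The proof then shows, by a short combinatorial word argument, that the binomial relations among these semigroup elements are generated by two families --- one lifting to the Pl\"ucker relations (``union/intersection'') and one lifting to $\sum_{j\notin\tau}x_jP_{j\tau}$. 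By the Khovanskii-basis criterion \cite[Algorithm~1.4]{Kaveh-Manon-NOK}, this simultaneously proves that the stated relations generate $\ker(\Psi)$ and that the generators form a Khovanskii basis for a full-rank valuation on $\RR(\FL\T_n)$. So the paper's route trades your representation-theoretic bookkeeping for a concrete semigroup computation, and gets the degeneration statement for free.
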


For the proof of Theorem \ref{thm-FLT_n} we use a subduction argument \cite[Algorithm 1.4]{Kaveh-Manon-NOK} involving a modification of the semigroup $GZ_n$ of Gel'fand-Zetlin patterns with $n$ rows. A Gel'fand-Zetlin pattern $g \in GZ_n$ is an  array of integers arranged in $n$ rows, where the $i$-th row has $n + 1 - i$ entries $g_{ij}$.  These integers satisfy additional \emph{interlacing inequalities}: $g_{ij} \geq g_{i+1,j} \geq g_{i,j+1}$.  Let $GZ_n^+$ be the set of patterns with $g_{1n} = 0$.  It is well-known that the Cox ring $\RR(\FL(\K^n))$ has a discrete valuation $\v_{GT}$ with value semigroup is $GZ_n^+$.  The generators of $GZ_n^+$ are in bijection with strict subsets $\tau \subset [n]$, and in turn, with the Pl\"ucker generators of $\RR(\FL(\K^n))$.  The pattern $g(\tau)$ corresponding to $\tau$ is the unique pattern with $|\tau \cap [n-i + 1]|$ $1$'s and $|[n-i + 1] \setminus \tau|$ $0$'s in row $i$. 

\begin{proof}[Proof of Theorem \ref{thm-FLT_n}]
We select a monomial ordering on $\K[t_j, y_{ij}]$ which satisfies $y_{ij} \prec t_\ell$ for all $i, j, \ell$, and is diagonal on the $y_{ij}$.  In particular, the initial form $\In_\prec\det[y(\tau)]$ is the product of the diagonal terms. The following construction should be compared to the Gel'fand-Zetlin degeneration of the usual flag variety. 

We identify the initial forms $\In_\prec t_j^{-1}$, $\In_\prec \det[y(\tau)]t^\tau$, $\In_\prec \det[y(0,\tau)]t_0t^\tau$ with certain extended Gel'fand-Zetlin patterns.  The form $\In_\prec x_j$ is sent to $(0, -\be_j) \in GZ_n\times \Z^{n+1}$, and $\In_\prec \det[y(\tau)]t^\tau$ is sent to $(g(\tau), \sum_{j \in \tau} \be_j) \in GZ_n\times \Z^{n+1}$.  The initial form $\In_\prec \sum_{j =1}^n \det[y(j,\tau)]t_0t^\tau$ requires some discussion.  Observe that this sum can be rewritten as $\sum_{j \notin \tau} \det[y(j,\tau)]t_0t^\tau$. The initial monomial from these minors will then be the diagonal term of the minor $(\ell, \tau)$ where $\ell$ is the \emph{first} element of $[n]$ not in $\tau$. We let $\tau^*$ denote $\tau \cup \{\ell\}$.  Accordingly, we send $\In_\prec \sum_{j =1}^n \det[y(j,\tau)]t_0t^\tau$ to $(g(\tau^*), \be_0 + \sum_{j \in \tau} \be_j) \in GZ_n\times \Z^{n+1}$.

We must compute a generating set of binomial relations on these extended Gel'fand-Zetlin patterns, then show that each relation can be lifted to an element in the ideal generated by the Pl\"ucker relations and the $\sum_{j \notin \tau} x_jP_{j \tau} =0$.  Following \cite[Theorem 1.4]{Kaveh-Manon-NOK}, we have then shown that these relations generate $\ker(\Psi)$, and that $\RR(\FL\T_n)$ has a full rank valuation with Khovanskii basis given by the $x_j$ and $P_\tau$.  

To simplify notation, we let $(0, -\be_\ell)$ be denoted by $[-\ell]$, $(g(\tau), \sum_{j \in \tau} \be_j)$ be denoted by $[\tau, 0]$, and $(g(\tau^*), \be_0 + \sum_{j \in \tau} \be_j)$ be denoted by $[\tau^*, \ell]$, where $\ell$ is the element ``replaced" by $0$.  Observe that $[\tau, a]$ makes sense if and only if $[a] \subset \tau$. 

Now we have several natural classes of binomial relations.  For any $a \in [n]$ with $[a] \subset \tau$ we have:\\

\[[-a][\tau, 0] = [0][\tau, a].\]\\

\noindent
Next, for any $\tau, \eta \subset [n]$,\\

\[[\tau, 0][\eta,0] = [\tau \cup \eta, 0][\tau \cap \eta, 0]\]\\

\noindent
For any marked $[\tau, a][\eta, b]$ we can perform relations like this and the there is always a compatible assignment of the markings $a, b$. We call relations of this type ``union/intersection" relations.  The first type of relation above lifts to $\sum_{j \notin \tau} x_j P_\tau = 0$, and the union/intersection lifts to a Pl\"ucker relation. Therefore, if we check that these relations suffice to generate the binomial ideal which vanishes on the initial forms, we have shown that the required relations generate $\ker(\Psi)$. 

Let us suppose we have two words $A_1\cdots A_n$, $B_1 \cdots B_n$ whose product maps to the same extended Gel'fand-Zetlin pattern. We must show that after applications of the above binomial relations, these words can be taken to a common word.  If any element $[-a]$ corresponds to an $[a]$ not supported by a pattern elsewhere in the word, this can be read off the $\Z^{n+1}$ component of the corresponding extended pattern.  Moreover, any $[-a]$ for $a$ which appear in some $(\tau,0)$ can be turned into $[0]$ using the first relation above. The number of these elements can also be read off the extended pattern, so we may assume without loss of generality that both words do not contain any elements $[-a]$ for $a \in \{0\} \cup [n]$.  Next, using the union/intersection relations, we can assume that the underlying Gel'fand-Zetlin patterns of the $A_i$ and $B_i$ are the same, with possibly different markings. Select a pattern on both sides, $A_1 = [\tau, a_1]$, $B_1 = [\tau, b_1]$.  If both markings are equal (including the case that they are $0$), we may factor off this top element and appeal to induction. If not, say $a_1 < b_1$.  We must conclude that $[a_1] \subset \tau$ and $[b_1] \subset \tau$.  Moreover, there must be some other pattern $A_j = [\eta, b_1]$.  Indeed, the set of markings can be deduced by comparing the total Gel'fand-Zetlin pattern of the word to its $\Z^n$ component. Now we can form $[\tau, a_1][\eta,b_1] = [\tau, b_1][\eta,a_1]$ as $[a_1] \subset [b_1] \subset \eta$. We factor off the first element of both words, and once again appeal to induction. This completes the proof. 
\end{proof}

\bibliographystyle{alpha}
\bibliography{main}

\end{document}